\documentclass{article}

\usepackage[english]{babel}

\usepackage[letterpaper,top=2cm,bottom=2cm,left=3cm,right=3cm,marginparwidth=1.75cm]{geometry}

\usepackage{amsmath}
\usepackage{graphicx}
\usepackage[colorlinks=true, allcolors=blue]{hyperref}

\usepackage{ dsfont }
\usepackage{amsfonts}
\usepackage{amsthm}
\usepackage{amsmath}
\usepackage{mdframed}
\usepackage{ amssymb, tcolorbox }
\usepackage{xcolor}
\newtheorem*{Maintheorem}{Theorem}
\newtheorem{theorem}{Theorem}[section]
\newtheorem{corollary}{Corollary}[theorem]
\newtheorem{lemma}[theorem]{Lemma}
\newtheorem{remark}{Remark}
\newtheorem{definition}[theorem]{Definition}

\def\R {\mathbb{R}}
\def\p{\partial}
\def\eps{\epsilon}

\def\P{\mathds{P}}

\title{Finite time singularities of smooth solutions for the 2D incompressible porous media (IPM) equation with a smooth source}
\author{ Diego C\'{o}rdoba\footnote{Instituto de Ciencias Matem\'aticas CSIC-UAM-UCM-UC3M, Spain. e-mail: dcg@icmat.es} \; \&  Luis Mart\'{\i}nez-Zoroa \footnote{University of Basel, Switzerland. e-mail: †
luis.martinezzoroa@unibas.ch}}

\begin{document}
\maketitle
\begin{abstract}
    We establish the existence of smooth, finite-energy solutions to the 2D incompressible porous media equation (IPM), with a compactly supported uniformly smooth source, which develop singularities in finite time.
\end{abstract}

\section{Introduction}
We consider smooth solutions in $L^2$ of the Incompressible Porous Media (IPM) system in $\mathbb{R}^2$ with a compactly supported smooth source. The IPM system is an active scalar equation given by:
\begin{equation}\label{eq:IPMsystem}\tag{IPM}
\begin{cases}
    \partial_t \rho + \mathbf{u} \cdot \nabla \rho = F, \\
    \mathbf{u} = - \frac{\mu}{\kappa} \nabla P + \mathbf{g} \rho, \quad \mathbf{g} = (0, -g)^\top, \quad \text{(Darcy's law)} \\
    \nabla \cdot \mathbf{u} = 0,
\end{cases}
\end{equation}
which models the dynamics of a fluid with density $\rho = \rho(x_1, x_2, t): \mathbb{R}^2 \times \mathbb{R}_+ \to \mathbb{R}$ flowing with a divergence free velocity field $\mathbf{u} = (u_1(x_1, x_2, t), u_2(x_1, x_2, t))$ through a porous medium in accordance with Darcy's law with a pressure $P = P(x_1, x_2, t)\in \mathbb{R}$. Here, the external source term $F = F(x,t)$ is in $C_c^\infty(\mathbb{R}^2 \times \mathbb{R}_+)$, and the constants $\mu>0$, $\kappa > 0$ and $g > 0$ represent the viscosity of the fluid, the permeability coefficient and the gravitational acceleration, respectively. For simplicity, we set $\mu=\kappa = g = 1$ in the analysis that follows. For further details on the physical background and applications of this model, we refer the reader to \cite{B} and the references therein.

In this paper we prove the following result:
\begin{mdframed}
\begin{Maintheorem}[\textbf{Finite time blowup for the IPM equation with a smooth source}]
There exists smooth compactly supported initial conditions $\rho_{0}\in C^{\infty}_{c}$ and a smooth compactly supported source term $F(x,t)\in L^{\infty}_{t}C^{\infty}_{x}$ such that the only classical solution to the IPM equation $\rho(x,t)\in C^{\infty}_{c}$ for $t\in[0,1)$ exhibits finite time blowup, that is to say
$$\text{lim}_{t\rightarrow 1}\|\rho\|_{C^1}=\infty.$$

\end{Maintheorem}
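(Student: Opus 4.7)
Since $F$ can be chosen freely, the natural approach is to invert the usual Cauchy viewpoint: prescribe a candidate density $\rho(x,t)\in C^\infty_c(\mathbb{R}^2)$ on $[0,1)$ that \emph{already} exhibits the desired blow-up, and \emph{define} the source by
\begin{equation*}
    F(x,t):=\partial_t\rho+\mathbf{u}[\rho]\cdot\nabla\rho,
\end{equation*}
where $\mathbf{u}[\rho]=(-\mathcal{R}_1\mathcal{R}_2\rho,\,\mathcal{R}_1^{\,2}\rho)$ is the zero-order singular integral operator recovered from Darcy's law together with $\nabla\cdot\mathbf{u}=0$. By construction $(\rho,F)$ solves \eqref{eq:IPMsystem}; $F(\cdot,t)$ inherits the compact support of $\rho(\cdot,t)$; and since $\nabla\mathbf{u}[\rho(\cdot,t)]\in L^\infty$ for every $t<1$, uniqueness of the classical $C^\infty_c$-solution on $[0,1)$ follows from the usual transport-equation theory. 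The theorem therefore reduces to producing one admissible pair $(\rho,F)$ such that $\rho\in C^\infty_c$ for all $t<1$, $\|\rho(\cdot,t)\|_{C^1}\to\infty$ as $t\to 1^-$, and $F\in L^\infty_tC^\infty_x$.

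\textbf{Construction.} The most natural candidate is a self-similar ansatz $\rho(x,t)=g(t)\phi((x-x_0)/\lambda(t))$ with a fixed profile $\phi\in C^\infty_c$ and a shrinking scale $\lambda(t)\downarrow 0$. Since $\mathbf{u}[\cdot]$ is scale invariant, one has $\mathbf{u}[\rho](x,t)=g(t)U(y)$ at $y=(x-x_0)/\lambda$, with $U=\mathbf{u}[\phi]$, and a direct substitution yields
\begin{equation*}
    F=g'(t)\,\phi(y)+\frac{g(t)}{\lambda(t)}\,\bigl(g(t)\,U(y)-\dot\lambda(t)\,y\bigr)\cdot\nabla\phi(y).
\end{equation*}
Taking $\dot\lambda\asymp -g$ forces a $C^1$ blow-up at rate $g/\lambda\sim(1-t)^{-1}$, but the second term is supported in the ball of radius $\sim\lambda(t)$ around $x_0$, so its $C^k_x$-norm scales like $\lambda^{-k}\to\infty$: a single self-similar bump cannot give $F\in L^\infty_tC^\infty_x$. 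To cure this I would use a telescoping multi-scale decomposition
\begin{equation*}
    \rho(x,t)=\sum_{n\geq 0} g_n(t)\,\phi\!\left(\frac{x-x_0}{\lambda_n(t)}\right),
\end{equation*}
with $\lambda_n\downarrow 0$ and the $g_n$ supported on successive disjoint time windows $[t_n,t_{n+1}]$ accumulating at $t=1$, so that at each time only one scale is ``active'', all others being frozen. Cross-scale contributions to $\mathbf{u}[\cdot]\cdot\nabla(\cdot)$ then live on the larger (smooth) scales and contribute only smooth pieces to the residual, while the self-interaction of the active mode can be killed to any prescribed order in $y$ by tuning $\phi$, $g_n$ and $\lambda_n$. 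Summing over $n$ and absorbing a smooth corrector into $F$ yields the desired pseudo-solution.

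\textbf{Main obstacle.} The hard point is the compatibility of a scale-invariant singularity with a source that is \emph{uniformly} smooth in $x$ as $t\to 1$. Technically this requires two ingredients: (i) quantitative information on how accurately the nonlocal Biot--Savart transform $U(y)=\mathbf{u}[\phi](y)$ can be matched to the affine shear $\dot\lambda\,y/g$ in a neighborhood of $y=0$ through a judicious choice of the profile $\phi$, driving the self-interaction residual to high order; and (ii) a convergent telescoping scheme in which each new dyadic scale ``repairs'' the residual left by the previous one, uniformly in every $C^k_x$-norm. Once (i)--(ii) are in place, the remaining verifications -- compact support, smoothness for $t<1$, divergence of $\|\rho\|_{C^1}$, and the uniform $C^\infty_x$-bound on $F$ -- are direct bookkeeping.
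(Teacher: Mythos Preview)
Your strategy of prescribing $\rho$ and defining $F$ by the residual is exactly what the paper does. The gap is in the construction: the self-similar concentration ansatz does not carry a mechanism that keeps $F$ uniformly smooth.

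Concretely, for a single bump the self-interaction term $\frac{g}{\lambda}\bigl(gU(y)-\dot\lambda\,y\bigr)\cdot\nabla\phi(y)$ is a fixed nontrivial profile in $y$ supported on the whole unit ball, so its $C^k_x$-norm is $\sim\lambda^{-k}$. ``Killing it to any prescribed order in $y$'' near $y=0$ does not help: the residual lives on all of $\operatorname{supp}\nabla\phi$, and matching $U(y)$ to the linear field $\dot\lambda\,y/g$ globally on that set would force $\phi$ to be an exact self-similar profile for IPM, which is not available. Your telescoping repair (ii) does not close either: the residual left by scale $\lambda_n$ is a function varying at scale $\lambda_n$, so a correction concentrated at the strictly smaller scale $\lambda_{n+1}$ cannot cancel it; it only adds a new residual at scale $\lambda_{n+1}$ with even larger $C^k_x$-norm. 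Likewise, the cross terms $u[\rho_n]\cdot\nabla\rho_m$ with $m<n$ are not ``smooth at the larger scale'': $u[\rho_n]$ itself varies at scale $\lambda_n$ near the center, so these terms also concentrate.

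What the paper exploits instead is an IPM-specific cancellation that your ansatz does not have. The layers are highly oscillatory, $\rho_l\approx f(x)\sin\bigl(N_l(bx_1+ax_2)\bigr)$, and the leading velocity satisfies $u^0(\rho_l)=C_0\cos\alpha\,(\sin\alpha,-\cos\alpha)\,\rho_l$, which is \emph{tangent} to the level sets of the fast phase. Hence $u(\rho_l)\cdot\nabla\rho_l$ is a full power of $N_l^{-\beta}$ smaller than the naive bound $\|u(\rho_l)\|_{L^\infty}\|\rho_l\|_{C^1}$ (this is the content of the velocity approximation in Section~\ref{secvelocity} and Lemma~\ref{cuadpeque}). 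The growth of $\|\rho\|_{C^1}$ is then driven not by spatial concentration but by the genuine IPM instability: each layer $\rho_p$ creates, near the origin, a background that looks like an unstable stratification, and the next layer $\rho_{p+1}$ grows through the term $u(\rho_{p+1})\cdot\nabla\rho_p\approx u^0_2(\rho_{p+1})\,\partial_{x_2}\rho_p$. This separation---frequency growth instead of spatial collapse, together with the tangency cancellation---is precisely what allows the residual to be absorbed into a uniformly $C^\infty_x$ source after an iterative correction (Lemmas~\ref{itera} and~\ref{forces}). Your proposal does not identify any analogue of this cancellation, and without one the scheme cannot produce $F\in L^\infty_tC^\infty_x$.
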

\end{mdframed}
\begin{remark}
    The regularity of the source term $F(x,t)\in L^{\infty}_{t}C^{\infty}_{x}$ is chosen to simplify the construction, but one can obtain better regularity in time by making some slight modifications in the proof (specifically, change \eqref{wt} and use a smoother function in time) and doing a more careful analysis of the time dependence of the errors. In particular, $C^{1}_{t}C^{\infty}_{x}$ regularity follows with some extra work, and one should even be able to obtain $C^{\infty}_{t}C^{\infty}_{x}$ regularity for the source term.
\end{remark}

\begin{remark}
    We have a similar behavior for the velocity $u\in C^\infty(\mathbb{R}^2)\cap L^2(\mathbb{R}^2)$ in the same time interval $[0,1)$, $$\lim_{t\rightarrow 1} |\nabla \mathbf{u}(x,t)|_{L^{\infty}} = \infty.$$
    Furthermore, due to blow-up criteria, we have that
    $$\lim_{t\rightarrow 1}\int_0^t |\nabla \mathbf{u}(x,s)|_{L^{\infty}}=\infty.$$
    As a consequence of the construction, at the time of the singularity $t=1$ we have that $\rho(x,1)\in C^{\infty}(\R^2\setminus{0})\cap C^{\alpha}$ for all $\alpha\in[0,1)$.
\end{remark}

The velocity field $\mathbf{u} = (u_1, u_2)$ of the system \eqref{eq:IPMsystem} can be reformulated in terms of a singular integral operator of degree 0 as follows:
\begin{equation}\label{eq:ipm}
\begin{cases}
    \partial_t \rho + \mathbf{u} \cdot \nabla \rho =  F, \\
    \mathbf{u} = (u_1, u_2) = -(\mathcal{R}_2, -\mathcal{R}_1) \mathcal{R}_1 \rho, 
\end{cases}
\end{equation}
where $\mathcal{R}_1$ and $\mathcal{R}_2$ denote the first and second components of the Riesz transform in two space dimensions, defined by
\begin{align}\label{eq:riesz}
    \mathcal{R}_1 = (-\Delta)^{-1/2} \partial_{x_1}, \qquad \mathcal{R}_2 = (-\Delta)^{-1/2} \partial_{x_2}.
\end{align}
The pressure is recovered from $P =(-\Delta)^{-1} \partial_{x_2}\rho$ by taking the divergence of Darcy's law and inverting the laplacian operator.

The incompressibility of the fluid ensures that classical solutions to the \eqref{eq:IPMsystem}  equation satisfy the following energy identity
\[
\frac{d}{dt} \int_{\mathbb{R}^2 } |\rho|^2 \, dx = 2\int_{\mathbb{R}^2 } \rho F \, dx,
\]
which provides a finite time-dependent bound for\(\|\rho \|_{L^2}\) and \(\|u\|_{L^2}\) with a smooth localized source $F$. Similar bounds are also satisfied for the $L^p$ norms of $\rho$ ($1\leq p \leq \infty$) and $u$ ($1<p<\infty$).

\subsection{Motivation and related work}

Active scalar equations that emerge in fluid dynamics pose a complex class of problems within the field of partial differential equations. Perhaps the most prominent two examples are the surface quasi-geostrophic (SQG) equation and the incompressible porous media (IPM) equation. In the case of SQG, the operator linking the velocity and the active scalar is given by $u = (-\mathcal{R}_2\rho, \mathcal{R}_1\rho)$, which is also a zero-order operator ($\mathcal{R}_i$ are the Riesz transforms). These two equations bear a strong analogy to the 3D incompressible Euler equations, sharing a remarkable number of similarities (see \cite{CMT}, \cite{C}, \cite{CGO}, \cite{EIS}, \cite{IV} and \cite{K}).

The problem of whether smooth solutions to the incompressible Euler, SQG, and IPM equations remain globally regular or develop singularities in finite time has been extensively studied by numerous authors over the years and is regarded as a significant open question in the field of partial differential equations.

The main motivation of this work is to provide the first blow-up result of smooth solutions of an incompressible  fluid with finite energy.   In this work we focus in the two-dimensional incompressible porous media (IPM) equation, which models the evolution of density transported by the flow of an incompressible fluid, governed by Darcy's law in the presence of a gravitational field. IPM is an active scalar equation where the scalar is driven by a divergence free velocity field and both are related by a singular integral operator of order zero $u=(u_1, u_2) = -(\mathcal{R}_2, -\mathcal{R}_1) \mathcal{R}_1 \rho$.

\paragraph{Local well-posedness for IPM.} Due to the properties of the Riesz transforms it is relatively direct by known methods to show that the IPM equation is locally in time well-posed (see \cite{CGO} and \cite{C}) in Holder spaces $\rho_0=\rho(x,t=0)\in C^{k,\alpha}(\mathbb{R}^2)\cap L^2 (\mathbb{R}^2)$ (for $k\geq 1$ and $\alpha\in (0,1)$) and in Sobolev spaces $\rho_0\in H^s(\mathbb{R}^2)$ for $s>2$.

\paragraph{Ill-posedness at critical regularity for IPM.}  Recently, the authors of this paper in collaboration with Bianchini  showed in \cite{BCM} that there exists $C^1$ solutions on a time interval $[0,T]$ for some $T>0$ such that at initial time $\rho_0\in H^2(\mathbb{R}^2)$ but for $t>0$ the solution is not in $H^2$ anymore. Furthermore, these solutions are unique in $H^1$ for the time interval $[0,T]$. In previous work \cite{FGSV}, a class of active scalar equations where the transport velocities is more singular by a derivative of order $\beta$ compared to the active scalar itself is studied. It is proved that, for $0 < \beta \leq 2$, these equations are ill-posed in the Lipschitz sense when the initial data is regular.

\paragraph{Stability results for IPM from a stable profile.} Very interesting results were obtain specifically for perturbations of the spectrally stable steady state $$ \bar\rho_{\text{stable}}(x) = -x_2.$$ Elgindi shows in \cite{Elgindi4} global-in-time existence of small perturbations of the stable profile for the IPM equation in $H^{k}(\R^2)$ for $k$ sufficiently large. See also \cite{CCL} for a stability result in the strip case and for improvements on the regularity see \cite{BCP}, \cite{P} and \cite{JK}. It is highly probable that asymptotic stability is maintained in $H^{2+}$, as suggested in \cite{BJP}. Notably, this insight underscores the importance of the findings in \cite{BCM}, where it is shown that reducing the regularity assumption to $H^2$  leads to a complete breakdown of stability near the stable profile, with solutions exhibiting blow-up at any moment in time $t>0$. 

\paragraph{Small scale formation for IPM.} In \cite{KY}, Kiselev and Yao construct smooth solutions to the incompressible porous media (IPM) equation that have unbounded growth of derivatives over time, assuming the solutions stay smooth. As a result, they are able to establish nonlinear instability for a certain class of stratified steady states of the IPM.

\paragraph{Finite time blow-up results of classical solutions for 3D incompressible Euler equations.} Recently, various blow-up scenarios have been developed within the locally well-posed regime $C^{1,\alpha}$ for the 3D incompressible Euler equations. Specifically, there are four known cases of blow-up involving finite energy and no boundary:

\begin{itemize}

    \item The first blow-up result is due to Elgindi in \cite{Elgindi2} and Elgindi, Ghoul and Masmoudi in \cite{Elgindi3}. They construct self similar blow up for axi-symmetric flows and no swirl, focusing on velocity profiles within the $C^{1,\alpha}$ H{\"o}lder space, where $\alpha$ is chosen to be a very small 
number, and which are $C^\infty$ almost everywhere.  

    \item In our recent work in collaboration with Zheng \cite{CMZ}, we presented a novel blowup mechanism for axi-symmetric flows without swirl that does not depend on self-similar profiles. These solutions remain smooth everywhere except at a singular point, where the solution is $C^{1,\alpha}$ and $\alpha$ being a very small value (see also \cite{Che2}). 
    
    \item In the recent study \cite{CM}, we propose a blow-up mechanism for the forced 3D incompressible Euler equations, focusing specifically on non-axisymmetric solutions. We construct solutions in $\mathbb{R}^3$ within the function space $C^{3,\frac{1}{2}} \cap L^2$ over the time interval $[0, T)$, where $T > 0$ is finite, subject to a continuous force in $C^{1,\frac{1}{2} -\epsilon} \cap L^2$. This framework results in a blow-up: as time $t$ approaches the critical value $T$, the integral $\int_0^t |\nabla u| \, ds$ diverges, indicating unbounded growth. Despite this, the solution remains smooth everywhere except at the origin. Importantly, our blow-up construction does not rely on self-similar coordinates and extends to solutions beyond the critical $C^{1,\frac{1}{3}+}$ regularity for global well-posedness in axi-symmetric flows without swirl. Furthermore, following the strategy of \cite{CM} we construct finite time singularities, in \cite{CMZ2} in collaboration with Zheng,  for the hypodissipative 3D Navier-Stokes equations  with an external forcing which is in $L^1_t C_x^{1,\epsilon}\cap L^{\infty}_{t} L^{2}_{x}$.

    \item In a very recent study, Elgindi and Pasqualotto examined the case of axi-symmetric flows with swirl and the 2D Boussinesq system (both systems are strongly related). In \cite{EP}, they prove a self-similar blow-up profile for solutions in $C^{1, \beta}$ (with $\beta$ very small), where the singularity occurs away from the origin. The solutions they considered belong to $H^{2+\delta}$, which places them above the critical regularity for axi-symmetric solutions, in a stronger sense than the usual $C^{1, \beta}$ solutions discussed in previous works on self-similar profiles.
    
\end{itemize}

It is important to highlight that all the previously mentioned blow-ups rely heavily on a $C^{1,\alpha}$ framework, whereas the blow-up we present in this paper for the IPM concerns $C^\infty \cap L^2$ solutions with a uniformly smooth source.

    Recent developments have greatly enhanced our understanding of self-similar singularities in axi-symmetric flows, particularly in the presence of boundaries. These advances have also paved the way for computer-assisted proofs in this area (see \cite{EJ}, \cite{Hou}, \cite{Hou2}, \cite{Hou4} and \cite{WLGB}).

\subsection{Ideas of the proof}
There are several key ideas that we need to apply in our construction in order to produce our smooth blow up. Through this subsection we will try to explain them in some detail.
\subsubsection{The use of different scales}\label{subsecscales}
The first important idea that we will use through our paper consists on studying solutions that are composed by different pieces with very different spatial scales. Say, for example, that we are considering a solution to \eqref{eq:ipm} $\rho(x,t)$ with $F=0$, and that we want to perturb this solution by adding some small perturbation $\rho_{pert}(x,t)$. One can, by direct computation, check that if $\rho(x,t)+\rho_{per}(x,t)$ is a solution to \eqref{eq:ipm} with $F(x,t)=0$, then
$$\p_{t}\rho_{pert}(x,t)+u(\rho_{pert})\cdot\nabla (\rho_{pert}(x,t)+\rho(x,t))+u(\rho(x,t))\cdot\nabla\rho_{pert}(x,t)=0.$$
Let us imagine that we choose $\rho_{pert}(x,t=0),\rho(x,t)$ with the right symmetries (namely, with the symmetries given by Definition \ref{odd}), so that $\rho(x=0,t),u(\rho(\cdot,t))(x=0)=0$. Under these conditions, if we consider now that both our perturbation $\rho_{pert}(x,t)$ and its velocity $u(\rho_{pert})$ are very concentrated around the origin for all times (assumption which is, of course, not necessarily true and in fact almost always false), we would expect the approximations
$$u(\rho_{pert})\cdot\nabla \rho(x,t)\approx u(\rho_{pert})\cdot(\nabla \rho)(0,t)$$
$$u_{i}(\rho(x,t))\p_{x_{i}}\rho_{pert}(x,t)\approx x_{1}(\p_{x_{1}}u_{i}(\rho(\cdot,t))(x=0))\p_{x_{i}}\rho_{pert}(x,t)+x_{2}(\p_{x_{2}}u_{i}(\rho(\cdot,t))(x=0))\p_{x_{i}}\rho_{pert}(x,t) $$
to be reasonable. This suggests studying the simplified system obtained by applying this approximation. Although, even with this approximation, this equation is still too hard to study in general, if we consider $\rho(x,t)\approx x_{2}$ around the origin, after applying our approximation we would obtain the simplified evolution equation
\begin{equation}\label{simple}
    \p_{t}\bar{\rho}_{pert}(x,t)+u(\bar{\rho}_{pert})\cdot\nabla \bar{\rho}_{pert}(x,t)+u_{2}(\bar{\rho}_{pert})+u_{lin}(\rho(x,t))\cdot\nabla\rho_{pert}(x,t)=0.
\end{equation}

where $u_{lin}$ here is just notation for the linear approximation of the velocity around the origin.
In \eqref{simple} the term $u_{2}(\bar{\rho}_{pert})$ is giving us a very clear mechanism for growth, since this is exactly the term that gives exponential growth for the IPM equation with an unstable background. We can use this simplified evolution equation to construct a perturbation that grows very rapidly. If we could, after some period of time, prove that
$$\rho_{pert}(x,t)\approx Mx_{2}$$
for some very big $M$, we could then add an even more concentrated perturbation around the origin, which could grow even faster using the background term, now of size $M u_{2}(\rho_{new pert})$. We could then iterate this process an infinite number of times to obtain a solution of the form
$$\sum_{l=0}^{\infty}\rho_{l}(x,t)$$
where $\rho_{p+1}(x,t)$ grows due the instability generated by $\rho_{p}(x,t)$. We will call these $\rho_{l}$ layers, and solutions formed by $p$ of this layers $p$-layered solutions.
This approach presents several difficulties:
\begin{enumerate}
    \item Can we actually solve \eqref{simple}? Even with the simplifications included, it is not clear at all how much explicit information we can obtain about the solution of the equation.
    \item The solution obtained from \eqref{simple} is not in general a solution to \eqref{eq:ipm} with $F=0$. We can always consider a specific source term that so that the solution to \eqref{simple} solves \eqref{eq:ipm}, but then in general this source term will have bad regularity properties, so the question becomes how smooth can the source term be.
\end{enumerate}
These two points are actually related: In order to obtain any meaningful bounds for the source term required, we would need to have good control over our approximation for the solution. The way to deal with both of them is to actually use more complicated approximations for our solutions that have explicit (or almost explicit) solutions, but that approximate the behavior with a very high degree of accuracy. Let us discuss now some of the ingredients that we use to construct these accurate approximations.

\subsubsection{Approximating the velocity}\label{subsecvel}
The main problem when trying to get explicit solutions for simplified evolution equations similar to \eqref{simple} is the appearance of the term $u(\rho_{pert})$. Even when we have perfect information about $\rho_{pert}$, in general we will not be able to compute $u(\rho_{pert})$ explicitly, and solving PDEs where the time derivative involves $u(\rho_{pert})$ is often hopeless. To circumvent this issue, in Section \ref{secvelocity},  we will obtain local approximations of the velocity $u^{k}(\rho)$, where we can imagine $u^{k}(\rho)$ to be the $k$-th order local approximation of the velocity $u(\rho)$. These operators will give meaningful approximations only under certain circumstances, namely that we are considering $\rho$ of the form
$$\rho(x)=f(x)\sin(N(bx_{1}+ax_{2})+\theta_{0})$$
with $N$ very big, and to be more precise we want $N^{k}$ to be very big compared to $\|f\|_{C^{k}}$. The approximations for the velocity we find will only involve derivatives and multiplication by constants, so the equations we obtain from approximating the real velocity by them will be much easier to study. Furthermore, the velocities obtained this way keep the same structure of the original $\rho$, and in particular

$$u^{k}_{i}(f_{1}(x)\sin(N(bx_{1}+ax_{2}))+f_{2}(x)\cos(N(bx_{1}+ax_{2})))=g_{1}(x)\sin(N(bx_{1}+ax_{2}))+g_{2}(x)\cos(N(bx_{1}+ax_{2})).$$

Another very important property from this approximation is that, again for functions with the right structure
$$\|u(\rho)\cdot\nabla\rho\|_{L^{\infty}}\ll\|\rho\|_{C^1}\|u^{k}(\rho)\|_{L^{\infty}}$$
due to a cancellation in this quadratic term, and similar bounds hold for higher order norms. This will be important since it will mean that, if when we decompose our solution that blows-up in different layers as $\sum_{l=0}^{\infty}\rho_{l}$, the self-interaction terms
$$u(\rho_{l})\cdot\nabla \rho_{l}$$
will be much smaller than what the rough a priori bounds would suggest, and this will allow us to consider these kinds of errors as small perturbative terms.

Note also, since we want the source term to be $C^{\infty}$, in some sense we will need an approximation of the velocity of ¨infinite order¨. More precisely,  we will use higher order approximations as we consider layers of the solution with higher frequency, and this approximation order will tend to infinity as this frequency goes to infinity.

\subsubsection{Dealing with passive transport}\label{subsubsectrunc}

We mentioned earlier in Subsection \ref{subsecscales} how we will use different scales to approximate the behavior of our solutions. Namely, we will consider
$\rho(x,t)=\sum_{l=0}^{\infty}\rho_{l}$
with $\rho_{p+1}$ much more concentrated around the origin than $\rho_{i}$ for $i=0,1,...,p$, and this allows us to approximate 
$$u(\sum_{l=0}^{p}\rho_{l})\cdot\nabla\rho_{p+1}\approx u_{lin}\cdot\nabla \rho_{p+1}$$
with $u_{lin}$ the linear approximation around the origin of $u(\sum_{l=0}^{p}\rho_{l})$. This kind of approximation would be enough for us if we just wanted to prove blow-up with some source term, but this approximation is too inaccurate to give us an error than we can cancel with a smooth source term. Therefore, we would rather approximate this part of the evolution as

$$u(\sum_{l=0}^{p}\rho_{l})\cdot\nabla\rho_{p+1}\approx \P_{m}(u(\sum_{l=0}^{p}\rho_{l}))\cdot\nabla \rho_{p+1}$$
where $\P_{m}(u(\sum_{l=0}^{p}\rho_{l}))$ is the Taylor expansion around the origin of order $m$ of $u(\sum_{l=0}^{p}\rho_{l})$. Taking $m$ very big then allows us to make the error of this approximation as small as we want.

Unfortunately, this introduces a new problem in our construction since we need $\rho_{p+1}$ to be of the form
$$\rho_{p+1}=\sum_{j=0}^{J}f_{1,j}(x)\sin(Nj(bx_{1}+ax_{2}))+f_{2,j}(x)\cos(Nj(bx_{1}+ax_{2}))$$
in order to be able to approximate 
$$u(\rho_{p+1})\approx u^{k}(\rho_{p+1}),$$
but this kind of structure will not be preserved by the evolution of the equation
$$\p_{t}\rho_{p+1}+\P_{m}(u(\sum_{l=0}^{p}\rho_{l}))\cdot\nabla \rho_{p+1}=0.$$

To deal with this, instead we can consider, for example, $X(t)=\P_{n}(\tilde{X}(x,t))$ fulfilling
$$\p_{t}\tilde{X}(x,t)+\P_{m}(u(\sum_{l=0}^{p}\rho_{l}))\cdot\nabla \tilde{X}(x,t)=0,$$
$$\tilde{X}(x,0)=ax_{1}+bx_{2}$$
and then, to approximate the evolution of
$$\p_{t}f+\P_{m}(u(\sum_{l=0}^{p}\rho_{l}))\cdot\nabla f=0,$$
$$f(x,0)=f_{0}(ax_{1}+bx_{2})$$
we can just use $f_{0}(X(x,t))$. We will use this trick to model the effect of the transport term $u(\sum_{l=0}^{p}\rho_{l})\cdot\nabla\rho_{p+1}$ in a way that allows us to preserve the structure of $\rho_{p+1}$, and thus allowing us to still apply the operator $u^{k}$ to $\rho_{p+1}$.

\subsubsection{The first order solution operator}
The main difficulty in our construction will be, given a $p$-layered solution $\sum_{l=0}^{p}\rho_{l}$ solving \eqref{eq:ipm} with some desirable properties (we will later on define p-layered solution carefully) and source term $\sum_{l=0}^{p}F_{l}(x,t)$, to try and construct a $p+1$-layered solution. We would like to use the ideas mentioned in Subsections \ref{subsecvel} and \ref{subsubsectrunc} to construct $\rho_{p+1}$ with the desired properties, but it is not entirely obvious how to do it.

Without going the specifics, in Section \ref{secconst} we will use the approximations explained earlier to construct an operator  $S$ such that
\begin{equation}\label{equationS}
    \p_{t}S(f(x))\approx -u(\sum_{l=0}^{p}\rho_{l})\cdot\nabla S(f(x))-u(S(f(x)))\cdot\nabla[S(f(x))+\sum_{l=0}^{p}\rho_{l}].
\end{equation}

Note that if we had equality instead of $\approx$ in \eqref{equationS}, $\sum_{l=0}^{p}\rho_{l}+S(f(x))$ would solve \eqref{eq:ipm} with source term $\sum_{l=0}^{p}F_{l}(x,t)$. We could then choose appropriate $f(x)$ and define $\rho_{p+1}=S(f(x))$. This, however, will not be the case, so then we would need to obtain estimates for
$$\p_{t}S(f(x))+u(\sum_{l=0}^{p}\rho_{l})\cdot\nabla S(f(x))+u(S(f(x)))\cdot\nabla[S(f(x))+\sum_{l=0}^{p}\rho_{l}].$$
If this term was smooth and small enough, we could just call it $F_{p+1}$, add it to the rest of the source terms and keep adding layers until we have our solution that blows-up $\sum_{l=0}^{\infty}\rho_{l}$. Again, in general this will not happen, so we will define
$$\rho_{p+1,0}(x,t)=S(f(x))$$
and decompose
$$\p_{t}S(f(x))+u(\sum_{l=0}^{p}\rho_{l})\cdot\nabla S(f(x))+u(S(f(x)))\cdot\nabla[S(f(x))+\sum_{l=0}^{p}\rho_{l}]=F_{0,bad}+F_{0,good}$$
where $F_{0,good}$ is a smooth error term that is small enough to be included in our source terms, while $F_{0,bad}$ will be a part of the error that is too big to be included in our source terms without breaking the condition $\sum_{l=0}^{\infty}F_{l}\in C^{\infty}$. 

However, this error term $F_{0,bad}$ is chosen to have enough structure so that $S(F_{0,bad})$ is well defined. We can then define
$$\rho_{p+1,1}(x,t)=S(F_{0,bad})$$
and again decompose
$$\p_{t}S(F_{0,bad})+u(\sum_{l=0}^{p}\rho_{l}+S(f(x)))\cdot\nabla S(F_{0,bad})+u(S(F_{0,bad}(x)))\cdot\nabla[S(F_{0,bad})+S(f(x))+\sum_{l=0}^{p}\rho_{l}]=F_{1,bad}+F_{1,good}$$
and we will keep iterating this process until $F_{j,bad}$ is actually small enough to be included in our source terms, finally defining
$$\rho_{p+1}=\sum_{j=0}^{J}\rho_{p+1,j},\quad F_{p+1}=\sum_{j=0}^{J}F_{j,good}+F_{J,bad}$$
with some $J$ big.

\subsection{Structure of the paper}
In Section \ref{secvelocity} we study the velocity operator $u$ for the IPM equations, obtaining local approximations $u^{k}$ that will be necessary for the final construction. In Section \ref{secconst} we obtain the necessary results to describe and prove the properties of our iterative construction: First in subsection \ref{subsectrunc} we study passive transport when considering polynomial velocities, as well as some modified transport equations where we also assume that the scalar stays a polynomial, and we obtain useful properties for the regularity and behavior of said equations. In Subsection \ref{subsecplayer} we define the $p$-layered solutions that we use for our construction, and obtain useful properties about them. We also include in this section several auxiliary lemmas, and we define the first order solution operator that will be crucial for our iterative construction. In Subsection \ref{subsecfinconst} we prove that given a $p$-layered solution, if the parameter $N_{p}$ is big enough we can actually construct a $p+1$-layered solution. Finally, in Section \ref{secblowup} we prove our main theorem.
\subsection{Notation}
Through our proofs, $C$ will be used to refer to a constant, that might change from line to line.
	If these constants depend on parameters (say $a,b,c$), we will use the notation $C_{a,b,c}$ to make explicit this dependence. We will sometimes omit sub-indexes in the proof to simplify the notation, but we will always specify the dependence on the constants explicitly in the proofs.

\section{Approximations for the velocity}\label{secvelocity}

The main purpose of this section is to construct  a useful approximation of the velocity field. 

The velocity $\mathbf{u}$ can be expressed in terms of convolution kernels in the following equivalent forms, up to an integration by parts (see, for instance, \cite{CGO}):
\begin{align}\label{eq:vel-kernel}
    \mathbf{u}(x, t) &= \text{PV} (H \star \rho)(x, t) = \text{PV} \left( (H_1 \star \rho)(x, t), (H_2 \star \rho)(x, t) \right) - \frac{1}{2}(0, \rho(x, t)),
\end{align}
where
\begin{align}\label{eq:kernels}
    (H_1(x), H_2(x)) &= \frac{1}{2\pi} \left( -2 \frac{x_1 x_2}{|x|^4}, \frac{x_1^2 - x_2^2}{|x|^4} \right).
\end{align}

Alternatively, the velocity field can be written as:
\begin{align}\label{eq:vel-kernel2}
    \mathbf{u}(x, t) &= (K \star \nabla^\perp \rho)(x, t) = \left( -(K \star \partial_{x_2} \rho)(x, t), (K \star \partial_{x_1} \rho)(x, t) \right),
\end{align}
with the kernel
\begin{align}
    K(x) = -\frac{1}{2\pi} \frac{x_1}{|x|^2}.
\end{align}

To make our life easier, we will consider only solutions with certain symmetries that are preserved by the \ref{eq:IPMsystem} equation. Namely, we have the following definition:

\begin{definition}\label{odd}
	We say that a function $f(x,t)$ is odd if $f(x_{1},x_{2},t)=-f(-x_{1},-x_{2},t)$. It is straightforward to check that this property is preserved by \eqref{eq:IPMsystem}: If  both $F(x,t)$ and $\rho(x,t=0)$ are odd, then the solution $\rho(x,t)$ will also be odd.
\end{definition}

\begin{remark}
	If $\rho\in C^{\infty}$ is odd, then we have that, for any $j=0,1,2,...$ and $l=0,1,...,2j$
	$$(\frac{\p^{2j}}{\p^{l}_{x_{1}}\p^{2j-l}x_{2}}\rho)(x=0)=0.$$
	Furthermore, the operators $u_{i}(\rho)$ both produce odd velocities if $\rho$ is odd, so the same property is true for $u_{i}(\rho)$.
\end{remark}

We are interested in computing
\begin{align}
    u_i(f( x) \sin (N(ax_{1}+bx_{2})).
\end{align}
We know from \eqref{eq:ipm} that 
\begin{align*}
     u(\rho (x))&= 
     C_{IPM} \int_{\R^2} \frac{h_1}{|h|^2} \nabla^{\perp}\rho (x+h) \, dh_1 \, dh_2.
\end{align*}
with $C_{IPM}=\frac{1}{2\pi}$. With this in mind we will now study the behavior of the operator
$$V(f(x))=\int_{\R^2} \frac{h_1}{|h|^2} f (x+h) \, dh_1 \, dh_2.$$

Since the value of the constant does not change the qualitative behavior of the system, we will consider $C_{IPM}=1$.
First, we want to distinguish between the "near" part of the operator (when $h$ is small) and the "far" part of the operator (when $h$ is not small).  Therefore, choosing $g(x) \in C^\infty$ such that
\begin{align}
    g(x)=\begin{cases}
         1 \quad |x| \le \frac 1 2,\\
         0 \quad |x| \ge 1,
         \end{cases}
\end{align}
and with $g(|x|)$ monotone decreasing,
we write
\begin{align}
    V(w(x))&= \int_{\R^2} \frac{h_1 }{|h|^2} w (x+h) \, dh_1 \, dh_2\notag \\
    &=\int_{\R^2} \frac{h_1 }{|h|^2} w (x+h) \, (1-g(N^\eps |h|)) \, dh_1 \, dh_2 + \int_{\R^2} \frac{h_1}{|h|^2} w(x+h) \, g(N^\eps |h|) \, dh_1 \, dh_2\\
    &=: V^{\text{far}}(w(x)) + V^{\text{near}}(w(x)).
\end{align}
We would like to prove that $V^{\text{far}}(w(x))$
is a small remainder term. This is the content of the following.

We consider from now on in this section $g$ to be fixed so that the constants do not depend on the specific choice of $g$.
\begin{lemma}\label{vfar}
Let $K >0$, $1>\eps>0$, $a,b,\Theta_{0}\in \mathds{R}$ with $a^2+b^2=1$, and $f(x) \in C_{c}^K(\R^2)$, $\text{supp}(f(x))\subset B_{1}(0)$. There exists a constant $C_{K,\eps}$ such that, if $w(x)=f(x)\sin (N[ax_2+bx_{1}]+\Theta_{0})$, the following remainder estimate holds true:
    \begin{align}\label{remainder1}
        |V^{\text{far}}(w(x))|&=\left| \int_{\R^2} \frac{h_1}{|h|^2} f(x+h) \sin (N[a(x_2+h_2)+b(x_{1}+h_{1})]+\Theta_{0}) \, (1-g(N^\eps |h|)) \, dh_1 \, dh_2 \right|\\ \nonumber
        &\le C_{K,\eps}\sum_{i=0}^{K} N^{i\eps-K}\|f\|_{C^{K-i}}.
    \end{align}
    Furthermore, for any $J\in\mathds{N}$ we have that
    $$\|V^{\text{far}}(w(x))\|_{C^J}\leq C_{K,\eps,J}\sum_{i=0}^{K} N^{i\eps+J-K}\|f\|_{C^{K-i+J}}.$$
\end{lemma}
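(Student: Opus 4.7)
The exponential factor $\sin(N[ax_{2}+bx_{1}]+\Theta_{0})$ oscillates at frequency $N$ in the direction $(b,a)$, and since $a^{2}+b^{2}=1$, the directional derivative $L:=b\partial_{h_{1}}+a\partial_{h_{2}}$ satisfies $L\sin\Phi=N\cos\Phi$ and $L\cos\Phi=-N\sin\Phi$, where $\Phi:=N[a(x_{2}+h_{2})+b(x_{1}+h_{1})]+\Theta_{0}$. My plan is to integrate by parts $K$ times using this operator, gaining a factor $N^{-K}$ at the cost of differentiating the slowly varying factor $F(h):=\tfrac{h_{1}}{|h|^{2}}(1-g(N^{\eps}|h|))f(x+h)$. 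Because the cutoff kills the integrand in a neighbourhood of $h=0$ and $f$ has compact support, no boundary terms appear, and $L^{*}=-L$ yields an identity of the form $V^{\text{far}}(w)(x)=\pm N^{-K}\int (L^{K}F)(h)\,\Psi_{K}(\Phi)\,dh$ for some $\Psi_{K}\in\{\pm\sin\Phi,\pm\cos\Phi\}$ satisfying $|\Psi_{K}|\le 1$.

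Next I would expand $L^{K}F$ by Leibniz into a sum of products
\[
L^{k_{1}}\!\left(\tfrac{h_{1}}{|h|^{2}}\right)\cdot L^{k_{2}}(1-g(N^{\eps}|h|))\cdot L^{k_{3}}f(x+h),\qquad k_{1}+k_{2}+k_{3}=K,
\]
and estimate each factor separately: $|L^{k_{1}}(h_{1}/|h|^{2})|\le C_{k_{1}}|h|^{-1-k_{1}}$ by homogeneity; $|L^{k_{2}}(1-g(N^{\eps}|h|))|\le C_{k_{2}}N^{\eps k_{2}}$, with support restricted to the annulus $\{N^{-\eps}/2\le|h|\le N^{-\eps}\}$ when $k_{2}\ge 1$ and $\le 1$ everywhere when $k_{2}=0$; and $|L^{k_{3}}f(x+h)|\le\|f\|_{C^{k_{3}}}$ with support in $\{|x+h|\le 1\}$.

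For $k_{2}=0$, passing to polar coordinates gives a uniform-in-$x$ bound $\int|h|^{-1-k_{1}}\,dh\le C$ for $k_{1}=0$, $\le C(1+\eps\log N)$ for $k_{1}=1$, and $\le CN^{\eps(k_{1}-1)}$ for $k_{1}\ge 2$ (the large-$|x|$ regime is absorbed using $|h|\ge |x|-1$ on the support of $f(x+\cdot)$). For $k_{2}\ge 1$ the integration is over an annulus of area $\sim N^{-2\eps}$ on which $|h|^{-1-k_{1}}\sim N^{\eps(1+k_{1})}$, producing $\le CN^{\eps(k_{1}+k_{2}-1)}$, which is strictly smaller. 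Setting $i=k_{1}+k_{2}$ and dividing by $N^{K}$ yields the claimed bound $C_{K,\eps}\sum_{i=0}^{K}N^{\eps i-K}\|f\|_{C^{K-i}}$, after absorbing the logarithmic factor at $i=1$ into the constant using $\log N\le C_{\eps}N^{\eps}$.

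For the $C^{J}$ estimate I would use the fact that the kernel acts by translation, so $\partial_{x}^{J}V^{\text{far}}(w)=V^{\text{far}}(\partial_{x}^{J}w)$, and apply the pointwise bound to each term produced by differentiating $w=f(x)\sin\Phi$. Each of the $J$ derivatives either lands on $f$ (preserving the oscillatory form) or on $\sin\Phi$ (producing a factor $N$ with sine and cosine swapped), so $\partial_{x}^{J}w$ is a sum of terms $\tilde f(x)\,\Psi(\Phi)$ with $\|\tilde f\|_{C^{K-i}}\le CN^{J-|S|}\|f\|_{C^{K-i+|S|}}$, where $|S|\in\{0,\dots,J\}$ counts the derivatives that fell on $f$. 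The dominant contribution comes from $|S|=0$ and yields the advertised $N^{\eps i+J-K}\|f\|_{C^{K-i+J}}$ bound. I expect the main obstacle to be purely organisational: carefully splitting the Leibniz sum and verifying that each of the three regimes (smooth kernel derivatives, cutoff derivatives on a thin annulus, and derivatives of $f$) fits cleanly into the single compact estimate.
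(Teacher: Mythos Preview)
Your proposal is correct and follows essentially the same approach as the paper: integrate by parts $K$ times against the oscillatory factor, expand by Leibniz over the kernel, the cutoff, and $f$, and bound each piece; the only difference is that you use the directional derivative $L=b\partial_{h_1}+a\partial_{h_2}$ (which cleanly produces the factor $N$), whereas the paper first reduces to $a\ge\tfrac12$ and integrates by parts in $h_2$ alone. Your treatment of the $C^J$ bound via $\partial_x V^{\text{far}}(w)=V^{\text{far}}(\partial_x w)$ also matches the paper's argument.
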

\begin{proof}
    To simplify the notation, we will show \eqref{remainder1} for $\Theta_{0}=0$, but the general case is proved in exactly the same way. Similarly, we note that either $|a|\geq \frac{1}{2}$ or $|b|\geq \frac{1}{2}$, and we will focus on the case $a\geq \frac12$, the other cases being analogous. We can use integration by parts with respect to $h_{2}$ $K$ times to obtain
    \begin{align*}
        |V^{\text{far}}(w(x))|&=|\int_{\R^2} \frac{h_1}{|h|^2} f(x+h) \sin (N[a(x_2+h_2)+b(x_{1}+h_{1})]) \, (1-g(N^\eps |h|)) \, dh_1 \, dh_2|\\
        &= |\frac{2^{K}}{N^K} \int_{\R^2} \frac{\p^{K}}{\p h^{K}_2} \left(\frac{h_1}{|h|^2}f(x+h)(1-g(N^\eps |h|))\right)  \sin (N[a(x_2+h_2)+b(x_{1}+h_{1})])+\frac{K\pi}{2})  \, dh_1 \, dh_2|\\
        &\leq \frac{C_{K}}{N^{K}}\sum_{i=0}^{K}\|f\|_{C^{K-i}}\int_{N^{-\epsilon}\geq |h|\geq \frac12N^{-\epsilon}}\sum_{j=0}^{i-1} \frac{1}{|h|^{1+j}}N^{(i-j)\epsilon} \, dh_1 \, dh_2\\
        &+\frac{C_{K}}{N^{K}}\sum_{i=0}^{K}\|f\|_{C^{K-i}}\int_{|h|\geq \frac12N^{-\epsilon}\cap \text{supp}(f(x+\cdot))} \frac{1}{|h|^{1+i}} \, dh_1 \, dh_2\\\\
        &\leq \frac{C_{K}}{N^{K}}\sum_{i=0}^{K}\|f\|_{C^{K-i}}N^{i\epsilon},
    \end{align*}
where we used the bounds for the support of $(1-g(N^{\epsilon}|h|))$ and of $\p^{i-j}_{h_{2}}(1-g(N^{\epsilon}|h|))$ with $i>j$.

To show the bounds in $C^{J}$ we just use the fact that $\p_{x_{i}}V^{far}(w)=V^{far}(\p_{x_{i}}w)$ and we apply the $L^{\infty}$ bound to each of the terms obtained from taking derivatives of $\rho$.
\end{proof}
Let us now deal with the localized part of the operator $V^\text{near}$. Exploiting the localization of $V^\text{near}$, which is forced by the cut-off function $g(N^\eps |h|)$, we simply appeal to a Taylor decomposition. More precisely, considering
$w(x)=f(x)\sin (N[ax_2+bx_{1}]+\Theta_{0})$
 we write
\begin{align}\label{TKRKN}
    &V^\text{near}(w(x))\notag\\
    &=\int_{\R^2} \frac{h_1}{|h|^2} f(x+h) \sin (N[a(x_2+h_2)+b(x_{1}+h_{1})]+\Theta_{0}) g(N^\eps |h|) \, dh_1 \, dh_2 \notag\\
    &= \int_{\R^2} \frac{h_1}{|h|^2} \left[\sum_{i=0}^K \sum_{j=0}^i \p_{x_1}^{i-j}\p_{x_2}^{j}(f(x)) \frac{h_1^{i-j} h_2^{j}}{(i-j)! j!}\right] \sin (N[a(x_2+h_2)+b(x_{1}+h_{1})]+\Theta_{0}) g(N^\eps |h|) \, dh_1 \, dh_2 + \mathcal{R}_{K,N}\notag\\
    &=:\mathcal{T}_K + \mathcal{R}_{K,N},
\end{align}
where 
$$\mathcal{R}_{K,N}:=\int_{\R^2} \frac{h_1}{|h|^2} \left[f(x+h)-\sum_{i=0}^K \sum_{j=0}^i \p_{x_1}^{i-j}\p_{x_2}^{j}f(x) \frac{h_1^{i-j} h_2^{j}}{(i-j)! j!}\right] \sin (N[a(x_2+h_2)+b(x_{1}+h_{1})]+\Theta_{0}) g(N^\eps |h|) \, dh_1 \, dh_2$$

We want to prove the following.
\begin{lemma}\label{RKN}
    Given $K\in\mathds{N}$, $1>\eps>0$, let $f( x) \in C^{K+1}(\R^2)$. Then there exists a constant $C_{K,\eps}>0$ such that 
    \begin{align*}
        |\mathcal{R}_{K,N}| \le C_{K,\eps}N^{-(K+2)\eps} \|f\|_{C^{K+1}}.
    \end{align*}
Furthermore, for $J\in\mathds{N}$, if $f( x) \in C^{K+J+1}(\R^2)$ we have that
\begin{align*}
        \|\mathcal{R}_{K,N}\|_{C^{J}} \le C_{K,J,\eps}N^{J-(K+2)\eps} \|f\|_{C^{K+J+1}}.
    \end{align*}
\end{lemma}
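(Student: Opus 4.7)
The plan is to bound $\mathcal{R}_{K,N}$ directly, exploiting the fact that the Taylor remainder of $f$ at $x$ vanishes to order $K+1$ in $h$, which compensates for the mild singularity of the kernel $h_1/|h|^2$ and combines with the support restriction $|h|\le N^{-\epsilon}$ coming from the cut-off $g(N^\epsilon|h|)$. Unlike the treatment of $V^{\text{far}}$ in Lemma~\ref{vfar}, here I will \emph{not} need to exploit the oscillation of $\sin(N[a(x_2+h_2)+b(x_1+h_1)]+\Theta_0)$ for the $L^\infty$ bound; the smallness comes purely from size estimates on the ball $|h|\le N^{-\epsilon}$. The oscillation will reappear only indirectly in the $C^J$ bound, where $x$-derivatives of the sine produce factors of $N$.

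\textbf{The $L^\infty$ bound.} First, I write the Taylor remainder in integral form,
\begin{equation*}
R^K(x,h) \;:=\; f(x+h)-\sum_{i=0}^{K}\sum_{j=0}^{i}\partial_{x_1}^{i-j}\partial_{x_2}^{j}f(x)\frac{h_1^{i-j}h_2^{j}}{(i-j)!\,j!} \;=\; (K+1)\!\!\sum_{|\alpha|=K+1}\frac{h^{\alpha}}{\alpha!}\int_{0}^{1}(1-t)^{K}(\partial^{\alpha}f)(x+th)\,dt,
\end{equation*}
from which one reads off the pointwise estimate $|R^K(x,h)|\le C_K\|f\|_{C^{K+1}}|h|^{K+1}$. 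Using the trivial bound $|h_1|/|h|^2\le 1/|h|$ and the fact that the cut-off $g(N^\epsilon|h|)$ vanishes outside $|h|\le N^{-\epsilon}$, I then estimate
\begin{equation*}
|\mathcal{R}_{K,N}(x)| \;\le\; C_K\|f\|_{C^{K+1}} \int_{|h|\le N^{-\epsilon}} |h|^{K}\,dh \;\le\; C_{K,\epsilon}\,N^{-(K+2)\epsilon}\|f\|_{C^{K+1}},
\end{equation*}
which is the desired $L^\infty$ bound.

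\textbf{The $C^J$ bound.} I differentiate under the integral sign. For a multi-index $\beta$ with $|\beta|=J$, derivatives in $x$ fall either on $R^K(x,h)$ or on the oscillating factor $\sin(N[\cdots]+\Theta_0)$; the cut-off $g(N^\epsilon|h|)$ depends only on $h$. From the integral representation above, $\partial_x^{\gamma}R^K(x,h)$ has exactly the same form as $R^K$ but with $f$ replaced by $\partial_x^{\gamma}f$, so
\begin{equation*}
|\partial_{x}^{\gamma}R^K(x,h)| \;\le\; C_{K,|\gamma|}\,\|f\|_{C^{K+1+|\gamma|}}\,|h|^{K+1}.
\end{equation*}
Each derivative hitting $\sin(N[\cdots])$ produces a factor bounded by $N$, so applying Leibniz and arguing as above yields
\begin{equation*}
\|\partial_{x}^{\beta}\mathcal{R}_{K,N}\|_{L^{\infty}} \;\le\; C_{K,J,\epsilon}\sum_{\gamma\le \beta} N^{|\beta|-|\gamma|}\|f\|_{C^{K+1+|\gamma|}} \int_{|h|\le N^{-\epsilon}}|h|^{K}\,dh \;\le\; C_{K,J,\epsilon}\,N^{J-(K+2)\epsilon}\|f\|_{C^{K+J+1}},
\end{equation*}
which is exactly the claimed estimate.

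\textbf{Main obstacle.} The proof is essentially routine Taylor-remainder bookkeeping — no genuine obstacle arises. The only point requiring a little care is the $C^J$ bound, where one must verify that even if all $J$ derivatives fall on the sine (producing $N^J$), the total $N$-power $N^{J-(K+2)\epsilon}$ remains small provided $K$ is chosen large enough relative to $J$ and $\epsilon^{-1}$. This asymmetry between the two contributions to the gain — Taylor cancellation on the near part versus integration by parts on the far part — is the reason the companion lemma (Lemma~\ref{vfar}) required oscillatory integration by parts while the present one does not.
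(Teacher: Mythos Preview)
Your proof is correct and follows essentially the same approach as the paper: bound the Taylor remainder by $C_K\|f\|_{C^{K+1}}|h|^{K+1}$, use $|h_1|/|h|^2\le 1/|h|$, and integrate over $|h|\le N^{-\epsilon}$; for $C^J$, differentiate under the integral and apply Leibniz so that derivatives on the sine contribute powers of $N$ while derivatives on the remainder raise the regularity index on $f$. Your write-up is in fact slightly more explicit than the paper's (you give the integral form of the remainder and spell out why $\partial_x^\gamma R^K$ has the same structure), but the argument is identical.
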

\begin{proof}
It is enough to note that
$$|\left[f(x+h)-\sum_{i=0}^K \sum_{j=0}^i \p_{x_1}^{i-j}\p_{x_2}^{j}f(x) \frac{h_1^{i-j} h_2^{j}}{(i-j)! j!}\right]|\leq C_{K}\|f\|_{C^{K+1}} |h|^{K+1}$$
so
$$|\mathcal{R}_{K,N}|\leq C\|f\|_{C^{K+1}}\int_{\R^2} \frac{|h|^{K+1}}{|h|}  g(N^\eps |h|) \, dh_1 \, dh_2\leq C_{K,\eps}\|f\|_{C^{K+1}} N^{-(K+2)\eps}.$$

For the inequality with $J>0$, we note that we can use differentiation under the integral sign to show that, if $D_{x}^{J}$ is a generic derivative of order $J$ with respect to the $x$ variable, and using a slight abuse of notation,

\begin{align*}
&|D^{J}\mathcal{R}_{K,N}|\\
    &\leq \int_{\R^2} \frac{h_1}{|h|^2}\sum_{k=0}^{J}D_{x}^{k}( \left[f(x+h)-\sum_{i=0}^K \sum_{j=0}^i \p_{x_1}^{i-j}\p_{x_2}^{j}f(x) \frac{h_1^{i-j} h_2^{j}}{(i-j)! j!}\right])\\
    &\times D_{x}^{J-k}(\sin (N[a(x_2+h_2)+b(x_{1}+h_{1})]+\Theta_{0})) g(N^\eps |h|) \, dh_1 \, dh_2\\
    & \leq C_{K,J}\int_{\R^2} \frac{|h|^{K}}{|h|}\sum_{k=0}^{J}\|f\|_{C^{K+k+1}} N^{J-k} g(N^\eps |h|) \, dh_1 \, dh_2\leq C_{K,J,\eps}\|f\|_{C^{K+J+1}} N^{J-(K+2)\eps}.
\end{align*}

\end{proof}

We can now use the change of variables $Nh_{1}=\tilde{h}_{1},Nh_{2}=\tilde{h}_{2}$  to get, after relabeling
\begin{align*}
    &V^{near}(w(x))-\mathcal{R}_{K,N}=\sum_{i=0}^K\sum_{j=0}^i N^{-(i+1)}\p_{x_1}^{i-j}\p_{x_2}^{j}f(x)\int_{\R^2} \frac{h_1}{|h|^2}    \frac{h_1^{i-j} h_2^{j}}{(i-j)! j!}\\
    &\times \sin ([a(Nx_2+h_2)+b(Nx_{1}+h_{1})]+\Theta_{0}) g(N^{\eps-1} |h|) \, dh_1 \, dh_2\\
    &=\sum_{i=0}^K\sum_{j=0}^i N^{-(i+1)} \p_{x_1}^{i-j}\p_{x_2}^{j}f(x)(\mathcal{T}^{i,j,s}_{N}+\mathcal{T}^{i,j,c}_{N})
\end{align*}
with
$$\mathcal{T}^{i,j,s}_{N}=\sin(aNx_{2}+bNx_1+\Theta_{0})\int_{\R^2} \frac{h_1}{|h|^2}    \frac{h_1^{i-j} h_2^{j}}{(i-j)! j!}\cos (ah_2+bh_{1}) g(N^{\eps-1} |h|) \, dh_1 \, dh_2,$$
$$\mathcal{T}^{i,j,c}_{N}=\cos(aNx_{2}+bNx_1+\Theta_{0})\int_{\R^2} \frac{h_1}{|h|^2}    \frac{h_1^{i-j} h_2^{j}}{(i-j)! j!}\sin (ah_2+bh_{1}) g(N^{\eps-1} |h|) \, dh_1 \, dh_2.$$
 We are now interested in studying 
 \begin{align}\label{tijs}
 	\mathbf{T^{i,j,s}_{N}}:=\int_{\R^2} \frac{h_1}{|h|^2}    \frac{h_1^{i-j} h_2^{j}}{(i-j)! j!}\cos (ah_2+bh_{1}) g(N^{\eps-1} |h|) \, dh_1 \, dh_2,
 \end{align}
\begin{align}\label{tijc}
	\mathbf{T^{i,j,c}_{N}}:=\int_{\R^2} \frac{h_1}{|h|^2}    \frac{h_1^{i-j} h_2^{j}}{(i-j)! j!}\sin (ah_2+bh_{1}) g(N^{\eps-1} |h|) \, dh_1 \, dh_2
\end{align}
and more specifically, their behavior when $N$ tends to infinity, which we do in the next lemma. It is important to keep in mind that $\mathbf{T}^{i,j,s}_{N},\mathbf{T}^{i,j,c}_{N}$ actually depend on $a$ and $b$.

\begin{lemma}\label{convVK}
    For any $K\geq 1$, $a,b\in \mathds{R}$ with $a^2+b^2=1$, for any $A>1$,  for $N_{2}\geq N_{1}\geq 1$ we have that there is a constant $C_{A,i,j}$ which does not depend on $a,b$ such that
    $$|\mathbf{T^{i,j,s}_{N_{1}}}-\mathbf{T^{i,j,s}_{N_{2}}}|\leq C_{A,i,j}N_{1}^{-A},$$
    $$|\mathbf{T^{i,j,c}_{N_{1}}}-\mathbf{T^{i,j,c}_{N_{2}}}|\leq C_{A,i,j}N_{1}^{-A},$$
    and in particular there exist constants $c^{s}_{i,j}$,$c^{c}_{i,j}$ such that 
    $$|c^{s}_{i,j}-\mathbf{T^{i,j,s}_{N_{1}}}|\leq 2C_{A,i,j}N_{1}^{-A},$$
    $$|c^{c}_{i,j}-\mathbf{T^{i,j,c}_{N_{1}}}|\leq 2C_{A,i,j}N_{1}^{-A}.$$
    Furthermore,
    \begin{equation}\label{cotaT}
        |c^{s}_{i,j}|,|c^{c}_{i,j}|\leq C_{i}
    \end{equation}
    and in particular this constant $C_{i}$ does not depend on $a,b$.
\end{lemma}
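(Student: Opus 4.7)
I would first establish that both sequences $\{\mathbf{T}^{i,j,s}_N\}_N$ and $\{\mathbf{T}^{i,j,c}_N\}_N$ are Cauchy with super-polynomial decay rates in $N$, which simultaneously produces the limits $c^s_{i,j}, c^c_{i,j}$ and the first pair of estimates, and then obtain the uniform bound \eqref{cotaT} by evaluating $\mathbf{T}^{i,j,s}_1$ and $\mathbf{T}^{i,j,c}_1$ directly. The main tool is integration by parts against the oscillating factor $\cos(ah_2+bh_1)$ or $\sin(ah_2+bh_1)$; the structural feature that keeps the constants uniform in $(a,b)$ is $a^2+b^2=1$, which forces $\max(|a|,|b|)\geq 1/\sqrt{2}$, so there is always a ``good'' variable to integrate against.

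\textbf{Step 1: Setup.} For $N_2\geq N_1\geq 1$ the cutoff difference $g(N_2^{\eps-1}|h|)-g(N_1^{\eps-1}|h|)$ vanishes for $|h|\leq N_1^{1-\eps}/2$ (both cutoffs equal $1$ there) and for $|h|\geq N_2^{1-\eps}$ (both equal $0$), so
\begin{align*}
    \mathbf{T}^{i,j,s}_{N_2}-\mathbf{T}^{i,j,s}_{N_1} = \int_{\R^2} K_{i,j}(h)\cos(ah_2+bh_1)\bigl[g(N_2^{\eps-1}|h|)-g(N_1^{\eps-1}|h|)\bigr]\,dh,
\end{align*}
where $K_{i,j}(h):=\frac{h_1^{i-j+1}h_2^j}{|h|^2(i-j)!\,j!}$ is smooth away from the origin and positively homogeneous of degree $i-1$. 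The integrand is supported on the annulus $N_1^{1-\eps}/2\leq|h|\leq N_2^{1-\eps}$.

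\textbf{Step 2: Integration by parts.} Suppose without loss of generality $|a|\geq 1/\sqrt{2}$ (otherwise $|b|\geq 1/\sqrt{2}$ and one integrates in $h_1$ instead). Using $\cos(ah_2+bh_1)=a^{-1}\partial_{h_2}\sin(ah_2+bh_1)$ and alternating sine and cosine, after $M$ integrations by parts (each contributing a bounded factor $|1/a|\leq\sqrt{2}$ independent of $(a,b)$) and expanding via Leibniz, the integrand becomes a sum of terms of the form $\partial_{h_2}^\alpha K_{i,j}(h)\cdot\partial_{h_2}^\beta[g(N_2^{\eps-1}|h|)-g(N_1^{\eps-1}|h|)]$ with $\alpha+\beta=M$. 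Homogeneity gives $|\partial_{h_2}^\alpha K_{i,j}(h)|\leq C_{i,\alpha}|h|^{i-1-\alpha}$, and a direct chain-rule computation yields $|\partial_{h_2}^\beta g(N^{\eps-1}|h|)|\leq C_\beta|h|^{-\beta}$ on its support (where $N^{\eps-1}\sim|h|^{-1}$), with the trivial bound $\leq 2$ when $\beta=0$. Each summand is thus pointwise bounded by $C_{i,j,M}|h|^{i-1-M}$ on the relevant annulus. Integrating over an annulus of inner radius $\sim N_1^{1-\eps}$ with 2D area factor $\sim|h|^2$ and taking $M>i+1$ so the inner boundary dominates,
\begin{align*}
    |\mathbf{T}^{i,j,s}_{N_2}-\mathbf{T}^{i,j,s}_{N_1}| \leq C_{i,j,M}\,N_1^{(1-\eps)(i+1-M)}.
\end{align*}
Choosing $M:=\lceil i+1+A/(1-\eps)\rceil$ yields the desired bound $\leq C_{A,i,j}N_1^{-A}$; the case $\mathbf{T}^{i,j,c}_N$ is handled identically after swapping sine and cosine.

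\textbf{Step 3: Conclusion and main obstacle.} The Cauchy property immediately produces limits $c^s_{i,j}$ and $c^c_{i,j}$; sending $N_2\to\infty$ in the estimate (the factor $2$ absorbing the passage to the limit) gives $|c^s_{i,j}-\mathbf{T}^{i,j,s}_{N_1}|\leq 2C_{A,i,j}N_1^{-A}$ and analogously for $c^c_{i,j}$. For the uniform bound \eqref{cotaT}, I would estimate $\mathbf{T}^{i,j,s}_1$ and $\mathbf{T}^{i,j,c}_1$ directly: their integrands are supported in $|h|\leq 1$ and dominated in absolute value by $|h|^{i-1}/((i-j)!\,j!)$, which is integrable in $\R^2$ for $i\geq 0$ with an integral depending only on $i$; combined with the approximation estimate at $N_1=1$, $A=2$, this gives $|c^s_{i,j}|,|c^c_{i,j}|\leq C_i$ independent of $(a,b)$. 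The main obstacle throughout is precisely this uniformity in $(a,b)$: individually $1/a$ and $1/b$ can blow up, but exactly one is bounded by $\sqrt{2}$, so the constraint $a^2+b^2=1$ always supplies a usable direction for integration by parts; the rest is routine stationary-phase-type bookkeeping.
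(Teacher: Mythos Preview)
Your proof is correct and follows essentially the same approach as the paper: integrate by parts in the variable whose frequency is bounded below (using $a^2+b^2=1$), apply Leibniz to distribute derivatives between the homogeneous kernel $K_{i,j}$ and the cutoff difference, and use the homogeneity $|\partial^\alpha K_{i,j}(h)|\lesssim|h|^{i-1-\alpha}$ together with the annular support of the cutoff derivatives to get the $N_1^{(1-\eps)(i+1-M)}$ bound. The only organizational difference is that the paper first isolates the terms where at least one derivative hits the cutoff via a separate auxiliary estimate (their equation \eqref{NA}) and then treats the pure-kernel term, whereas you handle all Leibniz terms uniformly; your treatment of the bound on $|\mathbf{T}^{i,j,s}_1|$ is in fact cleaner than the paper's.
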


\begin{proof}
We will show the proof only for $\mathbf{T^{i,j,s}_{N}}$, since the proof for $\mathbf{T^{i,j,c}_{N}}$ is completely analogous. As in Lemma \ref{vfar}, we will only consider the case $a\geq \frac12$.
First, we note that we can use integration by parts $k_{2}$ times to show that, for any finite $k_{1},k_{2}\in\mathds{N}\cup \{0\}$, $k_{3}\in\mathds{N}$, $c_{0}\in\mathds{R}$
\begin{align}\label{NA}
    &|\int_{\R^2} \frac{\p^{k_{1}}}{\p h_{2}^{k_{1}}}(\frac{h_1}{|h|^2}    \frac{h_1^{i-j} h_2^{j}}{(i-j)! j!})\cos (ah_2+bh_{1}+c_{0}) \frac{\p^{k_{3}}}{\p h_{2}^{k_{3}}}g(N^{\eps-1} |h|) \, dh_1 \, dh_2|\\\nonumber
    &\leq |\int_{\R^2} \frac{\p^{k_{2}}}{\p h_{2}^{k_{2}}}(\frac{\p^{k_{1}}}{\p h_{2}^{k_{1}}}(\frac{h_1}{|h|^2}    \frac{h_1^{i-j} h_2^{j}}{(i-j)! j!})\frac{\p^{k_{3}}}{\p h_{2}^{k_{3}}}g(N^{\eps-1} |h|)) 2^{k_{2}} \, dh_1 \, dh_2| \leq C_{k_{1},k_{2},k_{3}}N^{(-1+\epsilon)(k_{1}+k_{2}+k_{3}-i-1)}
\end{align}
where we used 
$$\text{supp}(\p_{h_{2}} g(N^{-1+\epsilon}|h|))\subset\{h: \frac{1}{2} N^{1-\epsilon}\leq|h|\leq N^{1-\epsilon}\}$$
and
$$|\frac{\p^{k} }{\p h_{2}^{k}} \frac{h_1}{|h|^2}    \frac{h_1^{i-j} h_2^{j}}{(i-j)! j!}|\leq C_{k}|h|^{i-1-k}. $$
But using this, for any $A,I\in\mathds{N}$, $I\geq i+1$

\begin{align*}
    &|\mathbf{T^{i,j,s}_{N_{2}}}-\mathbf{T^{i,j,s}_{N_{1}}}|\leq|\int_{\R^2} \frac{h_1}{|h|^2}    \frac{h_1^{i-j} h_2^{j}}{(i-j)! j!}\cos (ah_2+bh_{1}) [g(N_{1}^{\eps-1} |h|)-g(N_{2}^{\eps-1} |h|)] \, dh_1 \, dh_2|\\
    &\leq C_{I}|\int_{\R^2} \frac{\p^{I}}{\p h_{2}^{I}}\bigg(\frac{h_1}{|h|^2}    \frac{h_1^{i-j} h_2^{j}}{(i-j)! j!}[g(N_{1}^{\eps-1} |h|)-g(N_{2}^{\eps-1} |h|)]\bigg) \frac{1}{a^{I}}\cos (ah_2+bh_{1}+\frac{I\pi}{2}) \, dh_1 \, dh_2|\\
    &\leq C_{I}|\int_{\R^2} \frac{\p^{I}}{\p h_{2}^{I}}(\frac{h_1}{|h|^2}    \frac{h_1^{i-j} h_2^{j}}{(i-j)! j!})[g(N_{1}^{\eps-1} |h|)-g(N_{2}^{\eps-1} |h|)]\frac{1}{a^{I}}\cos (ah_2+bh_{1}+\frac{I\pi}{2})  \, dh_1 \, dh_2|+C_{A,I}N^{-A}\\
    &\leq C_{I}|\int_{|h|\geq N^{1-\epsilon} } |h|^{i-I-1} \, dh_1 \, dh_2|+C_{A,I}N^{-A}\leq C_{I}N^{(1-\eps)(1+i-I)}+C_{A,I}N^{-A}
\end{align*}
and taking $I$ big gives us the desired convergence for $\mathbf{T^{i,j,s}_{N}}$. Finally, we just bound
$$|\mathbf{T^{i,j,s}_{1}}|\leq |\int_{\R^2} \frac{\p^{k_{1}}}{\p h_{2}^{k_{1}}}(\frac{h_1}{|h|^2}    \frac{h_1^{i-j} h_2^{j}}{(i-j)! j!})\cos (ah_2+bh_{1}) \frac{\p}{\p h_{2}}g( |h|) \, dh_1 \, dh_2|\leq C_{i}$$
which combined with the convergence for $\mathbf{T^{i,j,s}_{N}}$ gives \eqref{cotaT}.

\end{proof}

\begin{remark}\label{cijodd}
	By using \eqref{tijc} and \eqref{tijs}, one can check by parity that $c^{s}_{i,j}=0$ if $i$ is even and $c^{c}_{i,j}=0$ if $i$ is odd.
\end{remark}

With these lemmas out of the way, we are now ready to obtain the expression for $V$ that we will use later on.
\begin{definition}
    Given $f(x)\in C^{K+1}$, $a,b,\Theta_{0}\in \mathds{R}$ such that $a^2+b^2=1$, for $w(x)=f(x)\sin (N[ax_2+bx_{1}]+\Theta_{0})$, we define
    $$V^{K}(w)(x)=\sum_{i=0}^{K}\sum_{j=0}^{i}\frac{\p_{x_1}^{i-j}\p_{x_2}^{j}f(x)}{N^{i+1}}(c^{s}_{i,j}\sin(N(bx_{1}+ax_{2})+\Theta_{0})+c^{c}_{i,j}\cos(N(bx_{1}+ax_{2})+\Theta_{0})),$$
    with $c^{s}_{i,j},c^{c}_{i,j}$ as in lemma \ref{convVK}.  
\end{definition}
We consider, as our approximation for $V(w)$ the following expression

\begin{lemma}\label{c00}
    Given $K\in\mathds{N}$, $f(x)\in C^{K+1}$, $\text{supp}(f(x))\subset B_{1}(0)$ $a,b,\Theta_{0}\in \mathds{R}$ fulfilling $a^2+b^2=1$, $N\geq 1$, 
    $$w(x):= f(x)\sin(N(bx_{1}+ax_{2})+\Theta_{0})$$
    we have that for any $J\in\mathds{N}$, if $f(x)\in C^{K+J+2}$,
\begin{align}\label{v1K}
    &\|V(w)(x)-V^{K}(w)(x)\|_{C^{J}}\\
    &\leq C_{K,\eps,J}(\sum_{i=0}^{K} N^{i\eps+J-K}\|f\|_{C^{K-i+J}}+N^{J-(K+2)\eps} \|f\|_{C^{K+J+1}})\nonumber.
\end{align}

Furthermore, we have that
$$c^{s}_{0,0}=0, c^{c}_{0,0}=bC_{0}$$
with $C_{0}>0$.
\end{lemma}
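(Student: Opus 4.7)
The plan is to assemble the three preparatory ingredients (Lemmas \ref{vfar}, \ref{RKN}, and \ref{convVK}) together with the explicit Taylor-plus-change-of-variables computation carried out just before Lemma \ref{convVK}. Writing $V(w) = V^{\text{far}}(w) + V^{\text{near}}(w)$ and then $V^{\text{near}}(w) = \mathcal{T}_K + \mathcal{R}_{K,N}$, the formula displayed before Lemma \ref{convVK} expresses $\mathcal{T}_K$ as exactly the same finite sum that defines $V^{K}(w)$, except that the numerical constants $c^{s}_{i,j}, c^{c}_{i,j}$ are replaced by $\mathbf{T}^{i,j,s}_{N}, \mathbf{T}^{i,j,c}_{N}$. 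Subtracting therefore yields
$$V(w)-V^{K}(w) = V^{\text{far}}(w) + \mathcal{R}_{K,N} + \sum_{i=0}^{K}\sum_{j=0}^{i}\frac{\p_{x_1}^{i-j}\p_{x_2}^{j}f(x)}{N^{i+1}}\bigl[(\mathbf{T}^{i,j,s}_{N}-c^{s}_{i,j})\sin(N(bx_1+ax_2)+\Theta_{0}) + (\mathbf{T}^{i,j,c}_{N}-c^{c}_{i,j})\cos(N(bx_1+ax_2)+\Theta_{0})\bigr].$$

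Next I would take $C^J$ norms term by term. Lemma \ref{vfar} produces exactly the first summand on the right-hand side of \eqref{v1K}, while Lemma \ref{RKN} produces the second. For the remaining finite sum, derivatives falling on the trigonometric factor cost at most $N^{J}$ and derivatives falling on the smooth amplitude are controlled by $\|f\|_{C^{K+J}}$; since Lemma \ref{convVK} makes $|\mathbf{T}^{i,j,s}_{N}-c^{s}_{i,j}|$ and $|\mathbf{T}^{i,j,c}_{N}-c^{c}_{i,j}|$ decay faster than $N^{-A}$ for every $A$, choosing $A$ large renders this contribution negligible compared to the two named terms. This completes the bound \eqref{v1K}.

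For the identities of $c^{s}_{0,0}$ and $c^{c}_{0,0}$: $c^{s}_{0,0}=0$ is immediate from Remark \ref{cijodd} (the case $i=0$ is even). For $c^{c}_{0,0}$ I would perform the orthogonal change of variables $s = bh_1 + ah_2$, $t = -ah_1 + bh_2$ (Jacobian $1$) in $\mathbf{T}^{0,0,c}_{N} = \int \frac{h_1}{|h|^2}\sin(ah_2+bh_1)g(N^{\eps-1}|h|)\, dh$. The phase becomes $\sin s$, and $h_1 = bs-at$, $|h|^2 = s^2+t^2$, so the integrand is $\frac{(bs-at)\sin s}{s^2+t^2}g(N^{\eps-1}\sqrt{s^2+t^2})$. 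The piece proportional to $t$ vanishes by odd symmetry in $t$, exposing $c^{c}_{0,0} = bC_{0}$ with $C_{0}$ manifestly independent of $a,b$.

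The main obstacle is proving that $C_{0} := \lim_{N\to\infty}\int \frac{s\sin s}{s^2+t^2}g(N^{\eps-1}\sqrt{s^2+t^2})\, ds\, dt$ is strictly positive. Here I plan to use the standard contour evaluation $\int_{-\infty}^{\infty}\frac{s\sin s}{s^2+t^2}\, ds = \pi e^{-|t|}$ (valid for $t\neq 0$) and Fubini to get $C_{0}=\int_{-\infty}^{\infty}\pi e^{-|t|}\, dt = 2\pi>0$. Some care is required because the double integral is not absolutely convergent; I would justify the interchange and the passage $N\to\infty$ by performing the inner $s$-integral first (where the cut-off $g$ is essentially a radial truncation at scale $N^{1-\eps}$) and then applying dominated convergence in $t$, using the rapid decay from Lemma \ref{convVK} to control the tail.
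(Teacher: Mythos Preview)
Your argument for the error bound \eqref{v1K} and for the identities $c^{s}_{0,0}=0$, $c^{c}_{0,0}=bC_{0}$ (via the orthogonal rotation $(s,t)=(bh_1+ah_2,-ah_1+bh_2)$) is essentially identical to the paper's.

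Where you diverge is in proving $C_{0}>0$. You compute $C_{0}$ explicitly by Fubini and the residue identity $\int_{\mathds R}\frac{s\sin s}{s^2+t^2}\,ds=\pi e^{-|t|}$, obtaining $C_{0}=2\pi$. The paper instead works at the specific direction $a=b=2^{-1/2}$, integrates by parts once in $h_{2}$ to replace the marginally non-integrable kernel $h_{1}/|h|^{2}$ by the genuine Calder\'on--Zygmund kernel $2h_{1}h_{2}/|h|^{4}$, passes to polar coordinates, and reduces the radial integral to a Frullani-type expression $\int_{0}^{\infty}\frac{\cos(RK_{1})-\cos(RK_{2})}{R}g(\lambda R)\,dR$, whose positivity in the limit $\lambda\to 0$ follows from the monotonicity of $g$. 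Your route is shorter and yields the exact constant; the paper's is entirely real-variable and avoids the conditional-convergence issue you flag. Your sketch of the justification (``do the inner $s$-integral first, then dominated convergence in $t$'') is correct in spirit but needs one more ingredient: a single integration by parts in $s$ only gives the dominating bound $|I_N(t)|\le C/|t|$, which is not integrable at infinity; you will need a second integration by parts (or equivalently two derivatives hitting the rational kernel) to obtain $|I_N(t)|\le C/t^{2}$ uniformly in $N$ for $|t|\ge 1$, after which dominated convergence goes through.
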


\begin{proof}
    We start by showing \eqref{v1K}. 
    First, we note that, if use the notation as in lemma \ref{vfar} and \eqref{TKRKN}, we have
    $$\|V(w)-V^{K}(w)\|_{C^{J}}=\|V^{far}(w)+\mathcal{T}_K + \mathcal{R}_{K,N}-V^{K}(w)\|_{C^{J}}\leq \|V^{far}(w)\|_{C^{J}}+\|\mathcal{T}_K\|_{C^{J}} + \|\mathcal{R}_{K,N}-V^{K}(w)\|_{C^{J}}$$
    and using lemmas \ref{vfar} and \ref{RKN}, this gives us
    \begin{align}\label{v1v1K}
        &\|V(w)-V^{K}(w)\|_{C^{J}}=\|V^{far}(w)+\mathcal{T}_K + \mathcal{R}_{K,N}-V^{K}(w)\|_{C^{J}}\\ \nonumber
        &\leq \|\mathcal{R}_{K,N}-V^{K}(w)\|_{C^{J}}+C_{K,\eps,J}\sum_{i=0}^{K} N^{i\eps+J-K}\|f\|_{C^{K-i+J}}\\
        &+C_{K,J,\eps}N^{J-(K+2)\eps} \|f\|_{C^{K+J+1}}.
    \end{align}

    But
    \begin{align*}
        &\mathcal{R}_{K,N}-V^{K}(w)=\sum_{i=0}^K\sum_{j=0}^i N^{-i-1} \p_{x_1}^{i-j}\p_{x_2}^{j}f(x)\sin(bNx_{1}+aNx_2+\Theta_{0})[\mathbf{T^{i,j,s}_{N}}-c^{s}_{i,j}]\\
        &+\sum_{i=0}^K\sum_{j=0}^i N^{-i-1} \p_{x_1}^{i-j}\p_{x_2}^{j}f(x)\cos(bNx_{1}+aNx_2+\Theta_{0})[\mathbf{T^{i,j,c}_{N}}-c^{c}_{i,j}]
    \end{align*}
    so we can apply lemma \ref{convVK} to get, for any $A'\in\mathds{N}$
    $$\|\mathcal{R}_{K,N}-V^{K}(w)\|_{C^{J}}\leq C_{A,K}\|f\|_{C^{J+K}}N^{-A'}$$
    which combined with \eqref{v1v1K} gives us \eqref{v1K}.
    
    To show $c^{s}_{0,0}=0$, we note that, using that $\cos(x)=\cos(-x)$,
    $$c^{s}_{0,0}=\text{lim}_{N\rightarrow\infty}\int_{\R^2} \frac{h_1}{|h|^2} \cos (ah_2+bh_{1}) g(N^{\eps-1} |h|) \, dh_1 \, dh_2=0.$$
    Finally, to prove the dependence of $c^{c}_{0,0}$, and writing $c^{c}(0,0)(a,b)$ to specify the value of $a,b$, we have
    $$c^{c}_{0,0}(a,b)=\text{lim}_{N\rightarrow\infty}\int_{\R^2} \frac{h_1}{|h|^2} \sin (ah_2+bh_{1}) g(N^{\eps-1} |h|) \, dh_1 \, dh_2,$$
    but using the change of coordinates $\tilde{h}_{1}=bh_{1}+ah_{2},$ $\tilde{h}_{2}=-ah_{1}+bh_{2}$ we have
     \begin{align*}
    c^{c}_{0,0}(a,b)&=\text{lim}_{N\rightarrow\infty}\int_{\R^2}\int_{\R^2} \frac{b\tilde{h}_1-a\tilde{h}_{2}}{|h|^2} \sin (\tilde{h}_{1}) g(N^{\eps-1} |\tilde{h}|) \, d\tilde{h}_1 \, d\tilde{h}_2\\
        &=\text{lim}_{N\rightarrow\infty}\int_{\R^2}\int_{\R^2} \frac{b\tilde{h}_1}{|h|^2} \sin (\tilde{h}_{1}) g(N^{\eps-1} |\tilde{h}|) \, d\tilde{h}_1 \, d\tilde{h}_2=bc^{c}_{0,0}(0,1),
    \end{align*}
    and thus we only need to show that $c^{c}_{0,0}(a,b)>0$ for some $b>0$. So, considering for example $a=b=(\frac{1}{2})^{\frac12}$ and after re-scaling it is enough to show
    $$\text{lim}_{N\rightarrow \infty}\int_{\R^2} \frac{h_1}{|h|^2} \sin (h_1+h_{2}) g(N^{\eps-1} |h|) \, dh_1 \, dh_2> 0.$$
    But, using integration by parts with respect to $h_{2}$
    \begin{align*}
        &\text{lim}_{N\rightarrow \infty}\int_{\R^2} \frac{h_1}{|h|^2} \sin (h_1+h_{2}) g(N^{\eps-1} |h|) \, dh_1 \, dh_2\\
        &=\text{lim}_{N\rightarrow \infty}\Big(\int_{\R^2} \frac{2h_{1}h_{2}}{|h|^4} \sin (h_1+h_{2}) g(N^{\eps-1} |h|) \, dh_1 \, dh_2-\int_{\R^2} \frac{h_1}{|h|^2} \sin (h_1+h_{2}) \p_{h_{2}}(g(N^{\eps-1} |h|)) \, dh_1 \, dh_2\Big),
    \end{align*}
    and we can use integration by parts as in lemma \ref{convVK}, \eqref{NA} to show that
    $$\text{lim}_{N\rightarrow \infty}|\int_{\R^2} \frac{h_1}{|h|^2} \sin (h_1+h_{2}) \p_{h_{2}}(g(N^{\eps-1} |h|)) \, dh_1 \, dh_2|\leq \text{lim}_{N\rightarrow \infty} CN^{-1}=0.$$
    On the other hand
    \begin{align*}
        &\int_{\R^2}\frac{h_{1}h_{2}}{|h|^4} \cos (h_1+h_{2}) g(N^{\eps-1} |h|) \, dh_1 \, dh_2=4\int_{\R_{+}^2}\frac{h_{1}h_{2}}{|h|^4} \sin (h_1)\sin(h_{2}) g(N^{\eps-1} |h|) \, dh_1 \, dh_2\\
        &=2\int_{\R_{+}^2}\frac{h_{1}h_{2}}{|h|^4} (\cos (h_1+h_{2})-\cos(h_{1}-h_{2})) g(N^{\eps-1} |h|) \, dh_1 \, dh_2\\
        &=2\int_{0}^{\frac{\pi}{2}}\sin(A)\cos(A)\int_{0}^{\infty} \frac{\cos (R(\sin(A)+\cos(A)))-\cos(R(\sin(A)-\cos(A)))}{R} g(N^{\eps-1} R) \, dR \, dA.
    \end{align*}
Now we note that, for $K_{2}\geq K_{1}>0$ we get, by writing $C_{K_{1},K_{2}}:=\int_{0}^{\infty}\frac{\cos (RK_{1})-\cos(RK_{2})}{R} dR$,
\begin{align*}
    &\int_{0}^{\infty} \frac{\cos (RK_{1})-\cos(RK_{2})}{R} g(\lambda  R) \, dR=C_{K_{1},K_{2}}-\text{PV}\int_{0}^{\infty} \frac{(\cos (RK_{1})-\cos(RK_{2}))}{R} (1-g(\lambda R)) \, dR\\
    &=C_{K_{1},K_{2}}-\text{PV}\int_{0}^{\infty} \frac{\cos (R)}{R} [g(\lambda\frac{R}{K_{2}})-g(\lambda \frac{R}{K_{1}})]dR.
\end{align*}
Taking $\lambda\rightarrow \infty$ we get that
\begin{align*}   
0&=C_{K_{1},K_{2}}-\text{lim}_{\lambda\rightarrow\infty}\text{PV}\int_{0}^{\infty} \frac{\cos(R)}{R} [g(\lambda\frac{R}{K_{2}})-g(\lambda \frac{R}{K_{1}})]dR\\&=C_{K_{1},K_{2}}-\text{lim}_{\lambda\rightarrow\infty}\int_{0}^{\infty} \frac{\cos(\frac{R}{\lambda})}{R} [g(\frac{R}{K_{2}})-g( \frac{R}{K_{1}})]dR.
\end{align*}
Then, using the monotonicity of  $g$ we get

$$C_{K_{1},K_{2}}=\int_{0}^{\infty} \frac{1}{R} [g(\frac{R}{K_{2}})-g( \frac{R}{K_{1}})]dR>0.$$
Furthermore we have that, for $K_{2}\geq K_{1}$
\begin{align*}
    &\text{lim}_{\lambda\rightarrow 0}|\text{PV}\int_{0}^{\infty} \frac{\cos(R)}{R} [g(\lambda\frac{R}{K_{2}})-g(\lambda \frac{R}{K_{1}})]dR|\leq \text{lim}_{\lambda\rightarrow 0}\text{PV}\int_{0}^{\infty} |\p_{R}[\frac{g(\lambda\frac{R}{K_{2}})-g(\lambda \frac{R}{K_{1}})}{R}]|dR\\
    &\leq \text{lim}_{\lambda\rightarrow 0}\text{PV}\int_{\frac{K_{1}}{2\lambda}}^{\frac{K_{2}}{\lambda}}|\frac{1}{R^2}+\frac{\lambda}{ K_{1}R}|dR=0,
\end{align*}
so
$$\text{lim}_{\lambda\rightarrow 0}\text{PV}\int_{0}^{\infty}\int_{\R^2}\frac{h_{1}h_{2}}{|h|^4} \cos (h_1+h_{2}) g(N^{\eps-1} |h|) \, dh_1 \, dh_2=C_{K_{1},K_{2}}.$$
But since we have
$$|\int_{0}^{\infty} \frac{\cos(R)}{R} [g(\lambda\frac{R}{K_{2}})-g(\lambda \frac{R}{K_{1}})]dR|\leq \ln(K_{2})+\ln(2)-\ln(K_{1}),$$
we can use the dominated convergence theorem to show that
\begin{align*}
    &\text{lim}_{N\rightarrow\infty}2\int_{0}^{\frac{\pi}{2}}\sin(A)\cos(A)\int_{0}^{\infty} \frac{\cos (R(\sin(A)+\cos(A)))-\cos(R(\sin(A)-\cos(A)))}{R}g(N^{\eps-1} R) \, dR \, dA\\
    &=2\int_{0}^{\frac{\pi}{2}}\sin(A)\cos(A)C_{\sin(A)+\cos(A),\sin(A)-\cos(A)} \, dA>0
\end{align*}
which finishes the proof.

\end{proof}


We can now use Lemma \ref{c00} to give a useful approximation of our velocity when densities of the form $f(x)\sin(N(bx_{1}+ax_{2}))$.

\begin{corollary}\label{defuk}
	Given $K\in\mathds{N}\cup 0$, $f(x)\in C^{K+1}$, $a,b,\Theta_{0}\in \mathds{R}$ fulfilling $a^2+b^2=1$, $N\geq 1$, 
	$$\rho(x):= f(x)\sin(N(bx_{1}+ax_{2})+\Theta_{0})$$
	if we define 
	$$u^{K}_{1}(\rho(x)):=-V^{K}(f(x)\p_{x_{2}}(\sin(N(bx_{1}+ax_{2})+\Theta_{0})))-V^{K-1}(\p_{x_{2}}(f(x))\sin(N(bx_{1}+ax_{2})+\Theta_{0}))$$
	$$u^{K}_{2}(\rho(x)):=V^{K}(f(x)\p_{x_{1}}(\sin(N(bx_{1}+ax_{2})+\Theta_{0})))+V^{K-1}(\p_{x_{1}}(f(x))\sin(N(bx_{1}+ax_{2})+\Theta_{0}))$$
(with $V^{-1}(\rho):=0$) we have that for any $J\in\mathds{N}$, $j=1,2$ if $f(x)\in C^{K+J+2}$,
	\begin{align}\label{u1K}
		&\|u_{j}(\rho)(x)-u^{K}_{j}(\rho)(x)\|_{C^{J}}\\
  &\leq C_{K,\eps,J}(\sum_{i=0}^{K} N^{i\eps+J+1-K}\|f\|_{C^{K-i+J+1}}+N^{J+1-(K+2)\eps} \|f\|_{C^{K+J+2}}).\nonumber
	\end{align}

	Furthermore, we have that
	$$u^{0}_{1}(\rho)(x)=abC_{0}\rho(x),\quad u^{0}_{2}(\rho)(x)=-b^2C_{0}\rho(x)$$
	with $C_{0}>0$ the constant from Lemma \ref{c00}.
\end{corollary}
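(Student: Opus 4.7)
The plan is to express $u_j(\rho)$ in terms of $V$ applied to pieces that have the precise sine-structure required by Lemma \ref{c00}, then apply that lemma term by term. From \eqref{eq:vel-kernel2} and the normalization $C_{IPM}=1$ fixed at the start of Section \ref{secvelocity}, we have $u_1(\rho) = -V(\p_{x_2}\rho)$ and $u_2(\rho) = V(\p_{x_1}\rho)$. The product rule yields
\[
\p_{x_j}\rho = M_j N f(x)\cos(N(bx_1+ax_2)+\Theta_0) + \p_{x_j}f(x)\,\sin(N(bx_1+ax_2)+\Theta_0),
\]
with $M_1=b$ and $M_2=a$. The cosine is rewritten as $\sin(N(bx_1+ax_2)+\Theta_0+\tfrac{\pi}{2})$, which puts each summand exactly in the input class of Lemma \ref{c00}. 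The definition of $u_j^K$ in the statement is designed to match this splitting precisely: $V^K$ is applied to the high-amplitude $M_jN f\cos$-piece, while $V^{K-1}$ (with the convention $V^{-1}:=0$) handles the lower-amplitude $\p_{x_j}f\,\sin$-piece.

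For the first piece I would apply Lemma \ref{c00} with amplitude $M_jNf$; since $|M_j|\le 1$, every norm on the right-hand side picks up an extra factor $N$, giving
\[
\bigl\|V(M_jNf\sin(\cdots)) - V^K(M_jNf\sin(\cdots))\bigr\|_{C^J} \le C_{K,\eps,J}\Bigl(\sum_{i=0}^K N^{i\eps+J+1-K}\|f\|_{C^{K-i+J}} + N^{J+1-(K+2)\eps}\|f\|_{C^{K+J+1}}\Bigr).
\]
For the second piece I would apply Lemma \ref{c00} with $K$ replaced by $K-1$ and amplitude $\p_{x_j}f$; the loss of one in $K$ produces a compensating factor $N$ that matches the first-piece bound, and the condition $N\ge 1$, $0<\eps<1$ lets me dominate the tail $N^{J-(K+1)\eps}$ by $N^{J+1-(K+2)\eps}$. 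Summing both contributions with the signs dictated by the definition of $u_j^K$, and upper-bounding $\|f\|_{C^{K-i+J}}\le\|f\|_{C^{K-i+J+1}}$, produces exactly \eqref{u1K}. The case $K=0$ is consistent with $V^{-1}:=0$, as the second piece then simply disappears.

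For the explicit identity at $K=0$, I would compute directly from the definition of $V^0$. Using $c^s_{0,0}=0$ and $c^c_{0,0}=bC_0$ from Lemma \ref{c00}, and observing that $\cos(\theta+\tfrac{\pi}{2})=-\sin(\theta)$, one has
\[
V^0\bigl(M_j Nf\sin(N(bx_1+ax_2)+\Theta_0+\tfrac{\pi}{2})\bigr) = \frac{M_jNf(x)}{N}\cdot bC_0\cos(N(bx_1+ax_2)+\Theta_0+\tfrac{\pi}{2}) = -bM_jC_0\,\rho(x).
\]
Taking $M_2=a$ together with the overall minus sign in the definition of $u_1^0$ gives $u_1^0(\rho) = abC_0\,\rho$, while $M_1=b$ gives $u_2^0(\rho) = -b^2C_0\,\rho$.

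The proof is largely bookkeeping on top of Lemma \ref{c00} and I do not anticipate a genuine obstacle; the only subtle point is the coordinated choice of shifting $K\mapsto K-1$ and amplitude $M_jNf \mapsto \p_{x_j}f$ in the two summands, engineered precisely so that the extra $N$ produced by differentiating the high-frequency sine factor is accounted for uniformly in both pieces and the resulting error still beats the amplitude-gained factor $N$ at the outer level.
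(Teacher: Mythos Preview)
Your proposal is correct and follows essentially the same route as the paper: split $\partial_{x_j}\rho$ into the high-frequency piece $M_jNf\cos(\cdots)$ and the low-frequency piece $(\partial_{x_j}f)\sin(\cdots)$, apply Lemma~\ref{c00} to each (with $K$ and $K-1$ respectively), and combine. The paper treats the $K=0$ identity as ``completely immediate'' without spelling it out, so your explicit computation there is a bit more detailed than the original but otherwise identical in spirit.
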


\begin{proof}
	The only property that is not completely immediate is \eqref{u1K}. For this, we just note that (taking $j=1$ and $\Theta_{0}=0$ for simplicity)
	\begin{align*}
		&\|u_{1}(\rho)(x)-u^{K}_{1}(\rho)(x)\|_{C^{J}}\\
		=&\|V(\partial_{x_{2}}\rho)(x)-V^{K}(f(x)\p_{x_{2}}(\sin(N(bx_{1}+ax_{2}))))-V^{K-1}(\p_{x_{2}}(f(x))\sin(N(bx_{1}+ax_{2})))\|_{C^{J}}\\
		\leq &\|V(f(x)\p_{x_{2}}(\sin(N(bx_{1}+ax_{2}))))-V^{K}(f(x)\p_{x_{2}}(\sin(N(bx_{1}+ax_{2}))))\|_{C^{J}}\\
		&+\|V(\p_{x_{2}}(f(x))\sin(N(bx_{1}+ax_{2})))-V^{K-1}(\p_{x_{2}}(f(x))\sin(N(bx_{1}+ax_{2})))\|_{C^{J}}\\
		\leq& C_{K,\eps,J}(\sum_{i=0}^{K} N^{i\eps+J+1-K}\|f\|_{C^{K-i+J}}+N^{i\eps+J-K}\|f\|_{C^{K-i+J+1}})\\
        + &C_{K,J,\eps}N^{J+1-(K+2)\eps} \|f\|_{C^{K+J+1}}+C_{K,J,\eps}N^{J-(K+2)\eps} \|f\|_{C^{K+J+2}}\\
		\leq&  C_{K,\eps,J}(\sum_{i=0}^{K} N^{i\eps+J+1-K}\|f\|_{C^{K-i+J+1}}+N^{J+1-(K+2)\eps} \|f\|_{C^{K+J+2}}).
	\end{align*}
	\begin{remark}
		It is easy to check using Remark \ref{cijodd} and our definition of $u^{K}$  that if $\rho(x)=f(x)\sin(N(bx_{1}+ax_{2}))$ is odd, then so is  $u^{K}(\rho)$.
	\end{remark}

\end{proof}

\section{Construction of the solution}\label{secconst}



\subsection{The truncated flow}\label{subsectrunc}

When constructing our solutions, solving the full non-linear evolution equation will be too complicated. To circumvent this problem, we will apply certain approximations to our system and use our source term to cancel the differences between our approximated evolution and the real evolution for the equation.

The first (very simplistic) model of our evolution would be just passive transport: we have a solution composed by pieces of very different sizes, and in particular the small pieces barely affect the bigger ones, so the velocity generated by the bigger pieces can be considered given when studying the evolution of the smaller pieces.

This motivate studying an evolution of the form

$$\p_{t}\rho_{small}+u(\rho_{big})\cdot\nabla \rho_{small}=0.$$

Of course, this is neglecting some (very important) parts of the real evolution, but it will be useful as a starting point. However, this model, in general, produces solutions that are not adequate for our construction, since we very quickly lose information about the structure of the solution.

We will therefore make some changes that will allow us to maintain some extra structure of our solution, in particular, we will approximate our velocity by a Taylor series expansion around the origin, which should gives us a reasonable approximation since $\rho_{small}$ will be very concentrated around the origin when compared to $u(\rho_{big})$. We will also, in some sense that will be made precise later, consider Taylor series expansions for $\rho_{small}$, also using the fact that $\rho_{small}$ is very concentrated to justify the approximation.

In order to do all this, we first need some definitions.

\begin{definition}
	Given an odd velocity $u(x,t)=(u_{1}(x,t),u_{2}(x,t))$ in $C^{k}$, we define the operator $\P_{k}(u(x,t))$ as
	$$\P_{k}(u(x,t))=(\P_{k}(u_{1}(x,t)),\P_{k}(u_{2}(x,t)))$$
	$$\P_{k}(u_{l}(x,t))=\sum_{i=0}^{k}\sum_{j=0}^{i}\frac{x_{1}^{j}x_{2}^{i-j}}{j!(i-j)!}\frac{\partial^{i}u_{l}}{\partial x_{1}^{j}\partial x_{2}^{i-j}}(x=0,t).$$
Similarly, given an odd scalar function $g(x,t)\in C^{k}$, we define
$$\P_{k}(g(x,t))=\sum_{i=0}^{k}\sum_{j=0}^{i}\frac{x_{1}^{j}x_{2}^{i-j}}{j!(i-j)!}\frac{\partial^{i}g}{\partial x_{1}^{j}\partial x_{2}^{i-j}}(x=0,t).$$
We call $\P_{k}(u),\P_{k}(g)$ the truncation of order $k$ of $u$ and $g$ respectively.
\end{definition}

Let us imagine we have some function $g(x,t)$ fulfilling
$$g(x,t=t_{0})=\sum_{i=0}^{\infty}\sum_{j=0}^{\infty}\frac{\partial^{i+j} g}{\partial^{i} x_{1} \partial^{j} x_{2}}(x=0,t=t_{0}) \frac{x_{1}^{i}x_{2}^{j}}{i!j!}$$
and such that $g(x,t)$ is transported by some velocity $u(x)$. One way to approximate $g(x,t)$ is to approximate the velocity by its truncation to order $k$, therefore obtaining the equation

$$\p_{t}g(x,t)+\P_{k}(u)\cdot\nabla g(x,t)=0$$
$$g(x,t_{0})=g(x).$$
This equation is formally equivalent to
$$g(x,t)= \sum_{i=0}^{\infty}\sum_{j=0}^{\infty}g_{i,j}(t) \frac{x_{1}^{i}x_{2}^{j}}{i!j!},$$
where $g_{i,j}(t)$ is obtained by solving
\begin{equation}\label{evolcij}
	\p_{t}g_{i,j}=-\sum_{\tilde{i}=0}^{i}\sum_{\tilde{j}=0}^{j}\frac{\p^{\tilde{i}+\tilde{j}}\P_{k}(u_{1})}{\p x_{1}^{\tilde{i}}\p x_{2}^{\tilde{j}}}(x=0,t)\frac{i!j!}{\tilde{i}!\tilde{j}!}\frac{g_{i-\tilde{i}+1,j-\tilde{j}}}{(i-\tilde{i})!(j-\tilde{j})!}-\sum_{\tilde{i}=0}^{i}\sum_{\tilde{j}=0}^{j}\frac{\p^{\tilde{i}+\tilde{j}}\P_{k}(u_{2})}{\p x_{1}^{\tilde{i}}\p x_{2}^{\tilde{j}}}(x=0,t)\frac{i!j!}{\tilde{i}!\tilde{j}!}\frac{g_{i-\tilde{i},j-\tilde{j}+1}}{(i-\tilde{i})!(j-\tilde{j})!}
\end{equation}
$$g_{i,j}(t=t_{0})=(\frac{\p^{i+j}}{\p x_{1}^{i} \p x_{2}^{j}}g)(x=0,t=t_{0}).$$

Unfortunately, even if our $g(x,t)$ is globally analytic at $t=t_{0}$, it is not clear if $g$ will remain globally analytic for $t\neq t_{0}$. However, if we are only interested in the local behaviour of $g(x,t)$ around $x=0$, we can consider
$$\P_{l}(g(x,t))=\sum_{i=0}^{l}\sum_{j=0}^{l-i}\frac{g_{i,j}(t)}{i!j!}x_{1}^ix_{2}^{j}.$$
With this in mind, we have the following definition.

\begin{definition}\label{truncated}
	Given an odd velocity $u(x,t)\in C^{k}$ and an odd scalar $g_{t_{0}}(x)\in C^{l}$, we define the $(k,l)$ truncated system as
	$$\p_{t} (\P_{l} g(x,t))=-\P_{l}[\P_{k}(u)(x,t)\cdot\nabla \P_{l}g(x,t)]$$
	$$\P_{l}g(x,t=t_{0})=\P_{l}g_{t_{0}}(x).$$
	Note that we have
	$$\P_{l}g(x,t)= \sum_{i=0}^{l}\sum_{j=0}^{l-i}g_{i,j}(t) \frac{x_{1}^{i}x_{2}^{j}}{i!j!},$$
	with
	\begin{equation}
		\p_{t}g_{i,j}=-\sum_{\tilde{i}=0}^{i}\sum_{\tilde{j}=0}^{j}\frac{\p^{\tilde{i}+\tilde{j}}\P_{k}(u_{1})}{\p x_{1}^{\tilde{i}}\p x_{2}^{\tilde{j}}}(x=0,t)\frac{i!j!}{\tilde{i}!\tilde{j}!}\frac{g_{i-\tilde{i}+1,j-\tilde{j}}}{(i-\tilde{i})!(j-\tilde{j})!}-\sum_{\tilde{i}=0}^{i}\sum_{\tilde{j}=0}^{j}\frac{\p^{\tilde{i}+\tilde{j}}\P_{k}(u_{2})}{\p x_{1}^{\tilde{i}}\p x_{2}^{\tilde{j}}}(x=0,t)\frac{i!j!}{\tilde{i}!\tilde{j}!}\frac{g_{i-\tilde{i},j-\tilde{j}+1}}{(i-\tilde{i})!(j-\tilde{j})!},
	\end{equation}
	$$g_{i,j}(t=t_{0})=(\frac{\p^{i+j}}{\p x_{1}^{i} \p x_{2}^{j}}g)(x=0,t=t_{0}).$$
\end{definition}
 Of special interest for us is the evolution for the $(1,1)$ truncated system, i.e., we are interested in studying the evolution of linear functions when considering $\P_{1}(u)$ as our velocity.

For this, given a linear velocity, i.e.
\begin{equation}\label{vlin}
	\P_{1}u=\P_{1}(u_{1}(x,t),u_{2}(x,t)),\quad\P_{1}u_{1}(x,t)=a(t)x_{1}+b(t)x_{2},\quad\P_{1}u_{2}(x,t)=c(t)x_{1}+d(t)x_{2}
\end{equation}
a straightforward computation shows that, if $\theta_{t_{0}}(x)=f(t_{0})x_{1}+g(t_{0})x_{2}$ and we study the passive transport equation
\begin{equation}\label{passtrans}
	\p_{t} \theta+\P_{1}(u)\cdot\nabla \theta=0, \quad\theta(x,t=t_{0})=\theta_{t_{0}}(x)
\end{equation}
then $\theta(x,t)=f(t)x_{1}+g(t)x_{2}$ with $f(t),g(t)$ given by
$$\p_{t}f(t)+a(t)f(t)+c(t)g(t)=0,\quad\p_{t}g(t)+b(t)f(t)+d(t)g(t)=0$$
is a solution to \eqref{passtrans}.
\begin{definition}\label{escvec}
	Given a linear function $\theta(x,t)=f(t)x_{1}+g(t)x_{2}$, we will use $\vec{\theta}(x,t)$ to represent it in vector form as $\vec{\theta}=\begin{pmatrix}
		f(t)\\
		g(t)
	\end{pmatrix}$.
	Similarly, given $\vec{\theta}=\begin{pmatrix}
		f(t)\\
		g(t)
	\end{pmatrix}$ we define its scalar representation $\theta(x,t)$ as $\theta(x,t)=f(t)x_{1}+g(t)x_{2}$.

\end{definition}
If we consider the  Jacobian of a velocity as in \eqref{vlin}, we will define
$$D\P_{1} u(x,t):=\begin{pmatrix}
	\p_{x_{1}}\P_{1} u_{1} & \p_{x_{2}}\P_{1} u_{1}\\
	\p_{x_{1}}\P_{1} u_{2} & \p_{x_{2}}\P_{1} u_{2}
\end{pmatrix}=\begin{pmatrix}
	a(t) & b(t)\\
	c(t) & d(t)
\end{pmatrix}.$$
 Note that, since $(\p_{x_{j}}\P_{1} u_{i})(x,t)=(\p_{x_{j}} u_{i})(x=0,t)$, we do not need to specify the point where we evaluate the Jacobian. To obtain a more compact notation, we will use the notation 
$$Du(x,t):=D\P_{1}u(x,t).$$
This gives us the following simple lemma.
\begin{lemma}\label{equvec}
	Given a linear function $\theta(x,t)$, with vectorial representation $\vec{\theta}(x,t)$ as in Definition \ref{escvec}, and a velocity as in \eqref{vlin}, we have that $\theta(x,t)$ fulfills
	$$\p_{t}\theta(x,t)+\P_{1}u(x,t)\cdot\nabla\theta(x,t)=0$$
	if and only if
	$$\p_{t}\vec{\theta}(x,t)+(Du(x,t))^{t}\vec{\theta}(x,t)=0.$$
\end{lemma}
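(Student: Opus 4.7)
The plan is to verify the equivalence by a direct computation, since both sides of the stated biconditional are linear in $x$ and so comparing coefficients of $x_1$ and $x_2$ will reduce everything to a $2\times 2$ system of ODEs.

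First I would write out the scalar equation explicitly. Given $\theta(x,t)=f(t)x_1+g(t)x_2$ and the linear velocity from \eqref{vlin}, one has $\p_t\theta = f'(t)x_1+g'(t)x_2$ and
\begin{equation*}
\P_1 u\cdot\nabla\theta = (a(t)x_1+b(t)x_2)f(t)+(c(t)x_1+d(t)x_2)g(t)=(a f+c g)x_1+(b f+d g)x_2.
\end{equation*}
Hence $\p_t\theta+\P_1 u\cdot\nabla\theta=0$ holds as an identity in $x$ if and only if the coefficients of $x_1$ and $x_2$ both vanish, i.e.
\begin{equation*}
f'+af+cg=0,\qquad g'+bf+dg=0.
\end{equation*}

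Next I would rewrite this $2\times 2$ system in matrix form. The coefficient matrix is $\begin{pmatrix} a & c \\ b & d\end{pmatrix}$, which is exactly the transpose of $Du(x,t)=\begin{pmatrix} a & b \\ c & d\end{pmatrix}$ introduced before the lemma. Thus the scalar equation is equivalent to $\p_t\vec{\theta}+(Du)^t\vec{\theta}=0$, which is the stated vector equation. The appearance of the transpose is the only subtle point, and it is forced by the fact that when we expand $u\cdot\nabla\theta$, the coefficient $a$ (which is $\p_{x_1}u_1$) multiplies $f$ (the coefficient of $x_1$ in $\theta$) in the $x_1$-equation, while $b=\p_{x_2}u_1$ multiplies $f$ in the $x_2$-equation, swapping the natural roles of rows and columns. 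Since every step above is an equivalence, both directions of the biconditional follow simultaneously, completing the proof.

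No estimates or auxiliary results are needed here; the only possible obstacle is bookkeeping the transpose correctly, which is handled by writing out the two coefficient equations explicitly before passing to matrix form.
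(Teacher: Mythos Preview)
Your proof is correct and is precisely the direct computation the paper alludes to; the paper's own proof is the single line ``The lemma follows trivially by direct computation,'' and you have simply written that computation out in full.
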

\begin{proof}
	The lemma follows trivially by direct computation.
\end{proof}
\begin{definition}\label{angle&mod}
	We say that a linear function $\vec{\theta}(t)$ has modulus $r(t)$ and angle $\alpha(t)$ if
	
	$$\vec{\theta}(x,t)=\begin{pmatrix}
		r(t)\cos(\alpha(t))\\
		r(t)\sin(\alpha(t))
	\end{pmatrix}.$$
	
	Note that, given some linear velocity $\P_{1}(u)$ with Jacobian $Du$, if
	$$\p_{t}\vec{\theta}(x,t)+(Du(x,t))^{t}\vec{\theta}(x,t)=0,$$
	we have

	$$\p_{t}r(t)+((Du(x,t))^{t}\vec{\theta}(x,t))\cdot\frac{\vec{\theta}(x,t)}{|\vec{\theta}(x,t)|}=0,$$
	\begin{equation*}
		\p_{t}\alpha(t)+((Du(x,t))^{t}\frac{\vec{\theta}(x,t)}{|\vec{\theta}(x,t)|})\cdot\frac{\vec{\theta}^{\perp}(x,t)}{|\vec{\theta}^{\perp}(x,t)|}=0.
	\end{equation*}
 Note that the evolution for $\alpha(t)$ does not depend on $r(t)$, and we can write it as
	\begin{equation}\label{angleeq}
		\p_{t}\alpha(t)+((Du(x,t))^{t}\begin{pmatrix}
		\cos(\alpha(t))\\
		\sin(\alpha(t))
	\end{pmatrix})\cdot\begin{pmatrix}
    -\sin(\alpha(t))\\
		\cos(\alpha(t))
	\end{pmatrix}=0.
	\end{equation}
	
	Furthermore, if $\alpha(t)$ fulfills equation \eqref{angleeq} for some $Du$, we say that $\alpha(t)$ moves with velocity $u(x,t)$.
	
\end{definition}
\begin{remark}\label{alphapoly}
    We can also define the angle $\alpha(t)$ of a polynomial, by considering only the lineal part of the polynomial. Given a polynomial $X(x,t)$ we say $\alpha(t)$ is its angle if it is the angle corresponding to the lineal part of $X(x,t)$.
\end{remark}

We start first by studying the behavior of the coefficients when the linear part of the velocity is equal to zero.
\begin{lemma}\label{vcuadcontrol}
	Let $u(x,t)$ be a $C^{k}$ odd velocity with 
	$$\P_{1}(u)=0$$
	and  consider $g(x,t=T)=\lambda_{1}x_{1}+\lambda_{2}x_{2}$ with $\lambda_{1},\lambda_{2}\in[0,1]$. If we consider
	$$\p_{t} (\P_{l} g(x,t))=-\P_{l}(\P_{k}(u)\cdot\nabla (\P_{l}g(x,t))$$
	with $\P_{k}$, $\P_{l}$ as in Definition \ref{truncated}, then, for $g_{i,j}(x,t)$ as in Definition \ref{truncated} we have that, for $i+j\geq 1$
	$$|g_{i,j}(t)|\leq C_{i+j}[1+|T-t|]^{i+j-1}\|u(x,t)\|^{i+j-1}_{C^{k}}.$$
	
\end{lemma}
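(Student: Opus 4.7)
The plan is to induct on the total degree $n:=i+j$. The key structural observation, supplied by the hypothesis $\P_{1}(u)=0$, is that in the ODE \eqref{evolcij} the factors $(\partial^{\tilde i+\tilde j}\P_{k}(u_{l})/\partial x_{1}^{\tilde i}\partial x_{2}^{\tilde j})(0,t)$ coincide with the partial derivatives of $u_{l}$ at the origin of the same order, and these vanish whenever $\tilde i+\tilde j\leq 1$. Hence only terms with $\tilde i+\tilde j\geq 2$ actually contribute to $\partial_{t}g_{i,j}$, and the coefficients $g_{i-\tilde i+1,j-\tilde j}$ and $g_{i-\tilde i,j-\tilde j+1}$ on the right-hand side have total degree $i+j+1-\tilde i-\tilde j\leq n-1$, strictly smaller than $n$. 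The recursion is therefore strictly triangular in $n$, which is what drives the induction.

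For the base cases $n=0$ and $n=1$, the constraints $\tilde i+\tilde j\geq 2$ and $\tilde i\leq i$, $\tilde j\leq j$ are incompatible, so the RHS of \eqref{evolcij} is identically zero and $\partial_{t}g_{i,j}\equiv 0$. Combined with $g_{0,0}(T)=0$ (the initial datum is linear) and $g_{1,0}(T)=\lambda_{1},\,g_{0,1}(T)=\lambda_{2}\in[0,1]$, this yields $g_{0,0}\equiv 0$ and $|g_{1,0}(t)|,|g_{0,1}(t)|\leq 1$, which is consistent with the claim for $n=1$.

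For the inductive step at $n\geq 2$, the linearity of the datum gives $g_{i,j}(T)=0$, so $g_{i,j}(t)=-\int_{T}^{t}\partial_{s}g_{i,j}(s)\,ds$. I would bound each derivative of $\P_{k}(u_{l})$ at the origin of order $s=\tilde i+\tilde j\leq k$ by a constant times $\|u\|_{C^{k}}$ (and note it is zero for $s>k$), and substitute the inductive estimate into every $g_{i',j'}$ that appears on the RHS. Absorbing the $\binom{i}{\tilde i}\binom{j}{\tilde j}$ factors into an $n$-dependent constant $C_{n}$, this produces a bound of the shape
\begin{equation*}
|\partial_{t}g_{i,j}(t)|\leq C_{n}\sum_{s=2}^{\min(n,k)}[1+|T-t|]^{n-s}\|u\|_{C^{k}}^{n+1-s}.
\end{equation*}
Integrating in time and using $1+|T-\sigma|\leq 1+|T-t|$ on the integration interval, every term falls under the envelope $C_{n}[1+|T-t|]^{n-1}\|u\|_{C^{k}}^{n-1}$ after collapsing the sum into its dominant $s=2$ term, closing the induction.

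The main obstacle I expect is the combinatorial bookkeeping in the inductive step: one must check that all terms in the sum over $s$, including the highest ones ($\tilde i+\tilde j=n$, whose input is only the uniformly bounded linear coefficients $g_{1,0},g_{0,1}$ from the base case), can be dominated by the top-order envelope $[1+|T-t|]^{n-1}\|u\|_{C^{k}}^{n-1}$ with a constant depending only on $n$. Once this algebraic step is executed cleanly, the rest of the argument is routine integration of the recursion, and the strict triangularity granted by $\P_{1}(u)=0$ guarantees that the induction closes at every degree.
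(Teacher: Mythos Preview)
Your proposal is correct and follows essentially the same route as the paper: both argue by induction on the total degree $n=i+j$, use $\P_{1}(u)=0$ to see that only contributions with $\tilde i+\tilde j\ge 2$ survive in \eqref{evolcij} (so the recursion is strictly triangular), observe that the degree-$1$ coefficients are therefore frozen, and then bound $|\partial_t g_{i,j}|$ by the inductive hypothesis and integrate. Your write-up is in fact slightly more explicit than the paper's, since you display the sum over $s=\tilde i+\tilde j$ and flag the bookkeeping needed to dominate the lower-$s$ contributions by the $s=2$ envelope; the paper collapses this step in one line.
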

\begin{proof}
	We start by noting that the properties for the derivative of $u$ give us
	$$g_{1,0}(t)=g_{1,0}(t=T),g_{0,1}(t)=g_{0,1}(t=T).$$
	Now, we can just argue by induction, if the bound is true for $i+j=1,2,...,n_{0}$ we have that, for $i=0,1,...,n_{0}+1$, $j=n_{0}+1-i$,
	\begin{align*}
		&|\p_{t}g_{i,j}|\\
		&\leq|\sum_{\tilde{i}=0}^{i}\sum_{\tilde{j}=0}^{j}\frac{\p^{\tilde{i}+\tilde{j}}\P_{k}(u_{1})}{\p x_{1}^{\tilde{i}}\p x_{2}^{\tilde{j}}}(x=0,t)\frac{i!j!}{\tilde{i}!\tilde{j}!}\frac{g_{i-\tilde{i}+1,j-\tilde{j}}}{(i-\tilde{i})!(j-\tilde{j})!}-\sum_{\tilde{i}=0}^{i}\sum_{\tilde{j}=0}^{j}\frac{\p^{\tilde{i}+\tilde{j}}\P_{k}(u_{2})}{\p x_{1}^{\tilde{i}}\p x_{2}^{\tilde{j}}}(x=0,t)\frac{i!j!}{\tilde{i}!\tilde{j}!}\frac{g_{i-\tilde{i},j-\tilde{j}+1}}{(i-\tilde{i})!(j-\tilde{j})!}|\\
		&\leq \sum_{\tilde{i}=0}^{i}\sum_{\tilde{j}=0,\tilde{j}+\tilde{i}\geq 2}^{j}C_{n_{0}}\|u\|_{C^{k}}[1+|T-t|]^{n_{0}-1}\|u(x,t)\|^{n_{0}-1}_{C^{k}}\leq C_{n_{0}+1}[1+|T-t|]^{n_{0}-1}\|u(x,t)\|^{n_{0}}_{C^{k}}
	\end{align*}
	and integrating in time gives the desired result.

\end{proof}

Unfortunately, in general we cannot hope that the linear part of our velocity is zero. With this in mind, we have the following more general lemma for the evolution of $\P_{l}(g(x,t))$.

\begin{lemma}\label{controlpolinomio}
	Let $u(x,t)$ be a $C^{k}$ odd velocity and $g(x,t=T)=ax_{1}+bx_{2}$ with $a,b\in[0,1]$. Let us consider
	$$\p_{t} (\P_{l} g(x,t))=-\P_{l}(\P_{k}(u)\cdot\nabla (\P_{l}g(x,t)))$$
	with $\P_{k}$, $\P_{l}$ as in Definition \ref{truncated}.
	If we define
	$$\p_{t}\phi(x,t)=\P_{1}(u)(\phi(x,t)),$$
	$$\phi(x,t=T)=x$$
	and we have, for $i,j=1,2$
	$$|\p_{x_{j}}\phi_{i}(x,t)|,|\p_{x_{j}}(\phi^{-1})_{i}(x,t)|\leq M,$$
	then we have that,
	$$|g_{i,j}(t)|\leq C_{i+j}M^{i+j+2}[1+|T-t|]^{i+j-1}\|u(x,t)\|^{i+j-1}_{C^{k}},$$
	with $g_{i,j}$ as in Definition \ref{truncated}.

\end{lemma}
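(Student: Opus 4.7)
My plan is to reduce the lemma to Lemma \ref{vcuadcontrol} by a linear change of variables that absorbs the linear part of the velocity. Since $\P_{1}(u)$ is linear in $x$, its flow takes the form $\phi(x,t)=A(t)x$ with $A(T)=I$, and the hypothesis gives $\|A(t)\|,\|A(t)^{-1}\|\le CM$. A linear change of variables preserves polynomial degree, so truncation commutes with precomposition by $\phi$. Setting $\tilde g(x,t):=g(\phi(x,t),t)$, a direct chain-rule computation starting from the $(k,l)$-truncated equation for $g$ shows that $\tilde g$ satisfies the $(k,l)$-truncated system with the modified velocity
\[
\tilde u(x,t) := A(t)^{-1}\bigl(u(A(t)x,t) - Du(0,t)A(t)x\bigr),
\qquad \tilde g(x,T)=ax_1+bx_2.
\]
The crucial fact is $\P_{1}(\tilde u)=0$: indeed $\tilde u(0,t)=A^{-1}u(0,t)=0$ by oddness of $u$, and $D\tilde u(0,t)=A^{-1}(Du(0,t)A-Du(0,t)A)=0$. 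This places $\tilde g$ in the exact setting of Lemma \ref{vcuadcontrol}.

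Because $\phi$ is linear, all its second and higher derivatives vanish, so the chain rule at the origin is clean and yields
\[
|\p^{\alpha}\tilde u(0,t)|\le C_{|\alpha|}\,M^{|\alpha|+1}\,\|u\|_{C^{|\alpha|}},\qquad |\alpha|\ge 2.
\]
I then repeat the induction on $i+j$ used in the proof of Lemma \ref{vcuadcontrol} but applied to $\tilde g$, using these estimates for the coefficients of $\P_{k}\tilde u$ at the origin. Since $\P_{1}(\tilde u)=0$, only the terms with $\tilde i+\tilde j\ge 2$ contribute on the right-hand side of the ODE for $\p_{t}\tilde g_{ij}$; summing the dominant contributions and integrating in time produces
\[
|\tilde g_{i,j}(t)|\le C_{i+j}\,M^{\beta(i+j)}\,[1+|T-t|]^{i+j-1}\,\|u\|_{C^{k}}^{i+j-1},
\]
with an exponent $\beta(i+j)$ that is linear in $i+j$ and independent of $k$.

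To conclude, I recover the original coefficients from $g(y,t)=\tilde g(A(t)^{-1}y,t)$. Since $A^{-1}$ is linear with entries bounded by $M$, each $g_{i,j}(t)$ is a combination of the $\tilde g_{i',j'}(t)$ with $i'+j'=i+j$ whose coefficients are polynomials of total degree $i+j$ in the entries of $A^{-1}$, and hence are bounded by $C_{i+j}M^{i+j}$. Combining this with the previous estimate yields the claim
\[
|g_{i,j}(t)|\le C_{i+j}M^{i+j+2}[1+|T-t|]^{i+j-1}\|u\|_{C^{k}}^{i+j-1}.
\]

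The main obstacle is the bookkeeping of the $M$-factors. A naive application of Lemma \ref{vcuadcontrol} using the worst-case bound $\|\tilde u\|_{C^{k},0}\le M^{k+1}\|u\|_{C^{k}}$ would produce an $M$-exponent growing with $k$. The refinement that keeps the exponent linear in $i+j$ (and independent of $k$) is to exploit the degree-sensitive estimate $|\p^{n}\tilde u(0,t)|\le CM^{n+1}\|u\|_{C^{n}}$ and to notice that in each term of the ODE an $n$-th derivative of $\tilde u$ gets paired with a coefficient of $\tilde g$ of total degree $i+j-n+1$, so that the $M$-exponent of every such product remains linear in $i+j$.
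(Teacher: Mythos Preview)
Your approach is essentially the paper's: both set $f(x,t)=g(\phi(x,t),t)$, show that $\P_l f$ satisfies the truncated system with the modified velocity $(D\phi^{-1})\bigl[\P_k(u)-\P_1(u)\bigr](\phi(\cdot,t))$ whose linear part vanishes, re-run the induction of Lemma~\ref{vcuadcontrol}, and then pull back via $\phi^{-1}$. Your observation that one must use the degree-sensitive bound $|\p^{n}\tilde u(0,t)|\le C_n M^{n+1}\|u\|_{C^n}$ rather than a uniform $\|\tilde u\|_{C^k}$, so that the $M$-exponent stays independent of $k$, is exactly the content of the paper's phrase ``use Lemma~\ref{vcuadcontrol} \dots\ and the bounds for $\phi,\phi^{-1}$''; note, however, that the induction as sketched only gives an $M$-exponent linear in $i+j$ (roughly $3(i+j)$), not literally $i+j+2$, which is the same looseness present in the paper's stated bound and is harmless for the downstream application.
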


\begin{proof}
We start by considering the function $f(x,t)=g(\phi(x,t),t).$ $\P_{l}f(x,t)$ fulfills the evolution equation
\begin{align*}
	&\frac{d}{dt}\P_{l}(f(x,t))=\p_{t}\P_{l}g(\phi(x,t),t)+\frac{\p \P_{l}g(\phi(x,t),t)}{\p \phi_{1}(x,t)}\p_{t}\phi_{1}(x,t)+\frac{\p \P_{l}g(\phi(x,t),t)}{\p \phi_{2}(x,t)}\p_{t}\phi_{2}(x,t))\\
	&=-\P_{l}((\P_{k}(u)\cdot \nabla \P_{l}g)(\phi(x,t),t)+\frac{\p \P_{l}g(\phi(x,t),t)}{\p \phi_{1}(x,t)}\p_{t}\phi_{1}(x,t)+\frac{\p \P_{l}g(\phi(x,t),t)}{\p \phi_{2}(x,t)}\p_{t}\phi_{2}(x,t))\\
	&=-\P_{l}([\P_{k}(u)-\P_{1}(u)]\cdot \nabla \P_{l} g)(\phi(x,t),t).
\end{align*}
To study this expression, we start by taking a look at $[\P_{k}(u)-\P_{1}(u)](\phi(x,t),t)$.
First, we have that
$$(\P_{k}(u)-\P_{1}(u))(x,t)=\sum_{j=2}^{k}\sum_{i=0}^{j}\frac{x_{1}^{i}x_{2}^{j-i}}{i!(j-i)!}\frac{\p^{j}u}{\p x_{1}^{i}\p x_{2}^{j-i}}(x,t)$$
and thus
$$(\P_{k}(u)-\P_{1}(u))(\phi(x,t),t)=\sum_{j=2}^{k}\sum_{i=0}^{j}\frac{\phi_{1}(x,t)^{i}\phi_{2}(x,t)^{j-i}}{i!(j-i)!}\frac{\p^{j}u}{\p x_{1}^{i}\p x_{2}^{j-i}}(\phi(x,t),t).$$
But we also have that, since $\phi_{1},\phi_{2}$ are linear
$$\phi_{1}(x,t)=x_{1}\p_{x_{1}}\phi_{1}(0,t)+x_{2}\p_{x_{2}}\phi_{1}(0,t),\quad\phi_{2}(x,t)=x_{1}\p_{x_{1}}\phi_{2}(0,t)+x_{2}\p_{x_{2}}\phi_{2}(0,t)$$
so
\begin{align*}
    &(\P_{k}(u)-\P_{1}(u))(\phi(x,t),t)\\
    &=\sum_{j=2}^{k}\sum_{i=0}^{j}\frac{(x_{1}\p_{x_{1}}\phi_{1}(0,t)+x_{2}\p_{x_{2}}\phi_{1}(0,t))^{i}(x_{1}\p_{x_{1}}\phi_{2}(0,t)+x_{2}\p_{x_{2}}\phi_{2}(0,t))^{j-i}}{i!(j-i)!}\frac{\p^{j}u}{\p x_{1}^{i}\p x_{2}^{j-i}}(\phi(x,t),t).
\end{align*}
On the other hand, we note that

$$\begin{pmatrix}
	\p_{x_{1}}(\P_{l}g(\phi(x,t),t))\\
	\p_{x_{2}}(\P_{l}g(\phi(x,t),t))
\end{pmatrix}=\begin{pmatrix}
	\p_{x_{1}}\phi_{1} & \p_{x_{1}}\phi_{2}\\
	\p_{x_{2}}\phi_{1} & \p_{x_{2}}\phi_{2}
\end{pmatrix}\begin{pmatrix}
 (\p_{x_{1}}\P_{l}g)(\phi(x,t),t)\\
 (\p_{x_{2}}\P_{l}g)(\phi(x,t),t)
\end{pmatrix}=(D\phi)^{t}\begin{pmatrix}
(\p_{x_{1}}\P_{l}g)(\phi(x,t),t)\\
(\p_{x_{2}}\P_{l}g)(\phi(x,t),t)
\end{pmatrix}$$
and therefore,
\begin{align*}
	\begin{pmatrix}
		(\p_{x_{1}}\P_{l}g)(\phi(x,t),t)\\
		(\p_{x_{2}}\P_{l}g)(\phi(x,t),t)
	\end{pmatrix}
	&=((D\phi)^{t})^{-1}
	\begin{pmatrix}
		\p_{x_{1}}(\P_{l}g(\phi(x,t),t))\\
		\p_{x_{2}}(\P_{l}g(\phi(x,t),t))
	\end{pmatrix}=
	(D\phi^{-1})^{t}
	\begin{pmatrix}
		\p_{x_{1}}(\P_{l}g(\phi(x,t),t))\\
		\p_{x_{2}}(\P_{l}g(\phi(x,t),t))
	\end{pmatrix}\\
	&=
	(D\phi^{-1})^{t}
	\begin{pmatrix}
		\p_{x_{1}}(\P_{l}f(x,t))\\
		\p_{x_{2}}(\P_{l}f(x,t))
	\end{pmatrix}
\end{align*}
which we can use in the evolution equation to get
\begin{align*}
	&\frac{d}{dt}\P_{l}(f(x,t))=-\P_{l}([\P_{k}(u)-\P_{1}(u)(\phi(x,t))]\cdot [(D\phi^{-1})^{t}\nabla \P_{l}f(x,t)])\\
	&=-\P_{l}((D\phi^{-1})[\P_{k}(u)-\P_{1}(u)(\phi(x,t))]\cdot \nabla \P_{l}f(x,t)).
\end{align*}
We can now use Lemma \ref{vcuadcontrol} with velocity $(D\phi^{-1})[\P_{k}(u)-\P_{1}(u)(\phi(x,t))]$ and the bounds for $\phi,\phi^{-1}$ to obtain
$$|f_{i,j}(t)|\leq C_{i+j}M^{i+j+1}[1+|T-t|]^{i+j-1}\|u(x,t)\|^{i+j-1}_{C^{k}},$$
where
$$f_{i,j}(t)=(\frac{\p^{i+j}}{\p x_{1}^{i} \p x_{2}^{j}}f)(x=0,t).$$
Finally, to obtain the bounds for $g(x,t)$ we just use that 
$g(x,t)=f(\phi^{-1}(x,t),t),\P_{l}g(x,t)=\P_{l}f(\phi^{-1}(x,t),t)$
combined with 
$$(\phi^{-1})_{1}(x,t)=x_{1}\p_{x_{1}}(\phi^{-1})_{1}(0,t)+x_{2}\p_{x_{2}}(\phi^{-1})_{1}(0,t),(\phi^{-1})_{2}(x,t)=x_{1}\p_{x_{1}}(\phi^{-1})_{2}(0,t)+x_{2}\p_{x_{2}}(\phi^{-1})_{2}(0,t)$$
and the bounds for the derivatives of $\phi^{-1}$ to get
$$|g_{i,j}(t)|\leq C_{i+j}M^{i+j+2}[1+|T-t|]^{i+j-1}\|u(x,t)\|^{i+j-1}_{C^{k}}.$$
\end{proof}

\subsection{p-layered solutions and the first order solution operator}\label{subsecplayer}

The approximation obtained for passive transport will be useful when constructing our solutions since, as mentioned before, when perturbing a known solution to IPM $\rho_{big}(x,t)$ with a much more concentrated perturbation $\rho_{small}(x,0)$, it gives, in some sense, the main order of the evolution. However, it is still completely ignoring some terms in our evolution: The self interaction of $\rho_{small}(x,t)$ with itself and the effect of the velocity generated by $\rho_{small}$ moving $\rho_{big}(x,t)$. 
Now, we need further structure of the functions $\rho_{big}$ and $\rho_{small}$ to give a more accurate description of the behavior. If, for example, we assume that $\rho_{small}=f(x)\sin(N(bx_{1}+ax_{2}))$, we can use the results from Section \ref{secvelocity} to approximate the evolution of $\rho_{small}$ as

\begin{equation}\label{attempt1c00}
	\p_{t}\rho_{small}+u(\rho_{big})\cdot\nabla\rho_{small}+u^{0}(\rho_{small})\cdot\nabla\rho_{big}=0,
\end{equation}
where $u^{0}$ is just defined using Corollary \ref{defuk} with $K=0$.

This model already would give us a  way more realistic approximation of the behaviour of $\rho_{small}$, but it has one very important problem: A function of the form $f(x)\sin(N(bx_{1}+ax_{2}))$ will stop having this form when it gets transported by some arbitrary flow. To circumvent this, we will instead modify  the transport part of our velocity, in particular we will instead of using normal transport use the results obtained in Subsection \ref{subsectrunc} to approximate the transport in a way that allows to apply the results from Section \ref{secvelocity}.

Furthermore, we still have not given a precise definition of the kind of solutions we want to study, or how exactly we will perturb a known solution with a very concentrated perturbation. In this section we will give the key definitions for the construction of our solutions, defining in particular $p$-layered solutions, which will be the building blocks for our solution that blows-up. We will also prove some important properties for $p$-layered solutions, and we will define precisely the approximated evolution equation that we will apply in our construction, reminiscent of \eqref{attempt1c00} but with some carefully done changes.

\begin{definition}\label{player}
	We say that $\sum_{i=0}^{p}\rho_{i}(x,t)$, with $\rho_{i}$ odd functions, is a p-layered solution if there exist odd functions $F_{i}(x,t)$, $i=0,1,...,p$ and parameters $(N_{i},\beta_{i},k_{i},K_{i})$, with $k_{i},K_{i}\in\mathds{N}, N_{i}\in\mathds{R}$ $k_{i},K_{i},N_{i}>2$, $\beta_{i}\in (0,\frac{1}{4})$ such that:
	\begin{enumerate}
		\item For $t\in [0,1]$ , $i=0,1,...,p$ $\rho_{i}(x,t),F_{i}(x,t)\in C^{\infty}$, $\text{supp}(\rho_{i}(x,t)),\text{supp}(F_{i}(x,t))\subset B_{1}(0)$. 
		\item For $i=1,2,...,p$, $N_{i}\geq e^{K_{-}\beta_{i-1}N_{i-1}^{\frac{\beta_{i}}{8}}}$ with $K_{-}>0$ a universal constant.
		\item For $t\in[0,1]$, $i=0,1,...,p$, we have
		$$\p_{t}(\sum_{j=0}^{i}\rho_{j}(x,t))+u(\sum_{j=0}^{i}\rho_{j}(x,t))\cdot\nabla(\sum_{j=0}^{i}\rho_{j}(x,t))=\sum_{j=0}^{i}F_{j}(x,t).$$
		\item For  $i=1,...,p$, $t\in[0,1-\tilde{C}N_{i-1}^{-\frac{3}{4}\beta_{i-1}}]$ with $\tilde{C}=\frac{16\pi^3}{C_{0}}$, $C_{0}$ as in Lemma \ref{c00}, we have $F_{i},\rho_{i}=0$.
		\item For $t\in[0,1]$, $i=0,1,...,p$, $j=0,1,...,k_{i}$
		$$\|\rho_{i}(x,t)\|_{C^{j}}\leq N_{i}^{j}.$$
		\item For $t\in[0,1]$, $i=0,1,...,p$, if $\beta_{i}\leq \frac{1}{40}$,
		$$\|F_{i}(x,t)\|_{C^{\lfloor\frac{1}{4\beta_{i}}\rfloor-10}}\leq N_{i}^{-\frac{1}{4}}.$$
		\item $K_{i}\geq \lceil\frac{4}{\beta_{i}^2}\rceil+1$ and for $t\in[0,1]$, $i=0,1,...,p$, $j=1,...,K_{i}$
		$$\|\sum_{l=0}^{i}\rho_{l}(x,t)\|_{C^{j}}\leq 2N_{i}^{j}.$$
		\item For $i=0,1,...,p-1$
  \begin{align}\label{cotasnb}
      &2\beta_{i+1}\geq \beta_{i},N_{i+1}\geq 4N_{i}, 2N_{i+1}^{-\frac{3\beta_{i+1}}{4}}\leq N_{i}^{-\frac{3\beta_{i}}{4}},\nonumber\\ 
       & N_{i}^{-1}\geq N_{i+1}^{-\frac{\beta_{i+1}}{8}},N_{i+1}^{\beta_{i+1}}\geq 10N_{i+1}^{\frac{\beta_{i+1}}{8}}.
  \end{align}
		
	\end{enumerate}  
	Furthermore, we have that, for $i=1,...,p$, $t\in[1-\tilde{C}N_{i}^{-\frac{3\beta_{i}}{4}},1]$, $\tilde{C}$ as in Lemma \ref{girot0}
	\begin{equation}\label{lowfrapriori}
		\|\sum_{l=0}^{i-1}\rho_{l}\|_{C^1}\leq N_{i}^{\frac{\beta_{i}}{8}},\|u(\sum_{l=0}^{i-1}\rho_{l})\|_{C^1}\leq N_{i}^{\frac{\beta_{i}}{8}},
	\end{equation}
	and for $i=0,1,...,p$ there are $A_{i}(t),B_{i}(t)\alpha_{i}(t)$, fulfilling
	$A_{i}(t),B_{i}(t)\in[\frac{N_{i}^{\beta_{i}}}{2},2N_{i}^{\beta_{i}}]$, such that 
	\begin{equation}\label{derivadasrhoi}
		|\frac{\p}{\p x_{1}}\rho_{i}-A_{i}(t)\cos(\alpha_{i}(t))|,|\frac{\p}{\p x_{2}}\rho_{i}-A_{i}(t)\sin(\alpha_{i}(t))|\leq N_{i}^{\frac{\beta_{i}}{8}}.
	\end{equation}
	\begin{align}\label{anguloup}
		&|\frac{\p}{\p x_{1}}u_{1}(\rho_{i})(x=0)-B_{i}(t)C_{0}\sin(\alpha_{i}(t))\cos(\alpha_{i}(t))^2|\leq N_{i}^{\frac{\beta_{i}}{8}},\\ 
		&|\frac{\p}{\p x_{2}}u_{1}(\rho_{i})(x=0)-B_{i}(t)C_{0}\sin(\alpha_{i}(t))^2\cos(\alpha_{i}(t))|\leq  N_{i}^{\frac{\beta_{i}}{8}},\\
        &|\frac{\p}{\p x_{1}}u_{2}(\rho_{i})(x=0)+B_{i}(t)C_{0}\cos(\alpha_{i}(t))^3|\leq  N_{i}^{\frac{\beta_{i}}{8}},
        \end{align}
        \begin{align}\label{anguloup2}
		&|\frac{\p}{\p x_{2}}u_{2}(\rho_{i})(x=0)+B_{i}(t)C_{0}\sin(\alpha_{i}(t))\cos(\alpha_{i}(t))^2|\leq  N_{i}^{\frac{\beta_{i}}{8}},
	\end{align}
	with $C_{0}$ as in Lemma \ref{c00}, and  $\alpha_{i}(t)$ fulfilling $\alpha_{i}(t=1)=N_{i}^{-\frac{\beta_{i}}{8}}+\frac{\pi}{2}$ where $\alpha_{i}(t)$ is the angle moving with velocity $u(\sum_{l=0}^{i-1}\rho_{l})$.

\end{definition}

\begin{remark}
    Although from the properties listed above it is not apparent what the exact meaning of each parameter is, each of them tells us a specific property of the $p$-layered solution. $\beta_{i}$ gives us the regularity of $\rho_{i}$, and in particular we expect $\|\rho_{i}\|_{C^{1-\beta_{i}}}$ to be roughly of order 1. $N_{i}$ gives us the frequency of $\rho_{i}$, i.e. $\rho_{i}$ should look roughly like $\frac{\sin(N_{i}x_{2})}{N_{i}^{1-\beta_{i}}}$ at the time of blow-up. Finally $k_{i}$ tells us how many derivatives of $\rho_{i}$ we can estimate and $K_{i}$ how many derivatives of $\sum_{l=0}^{i}\rho_{l}$ we can estimate.
\end{remark}
\begin{remark}
    Later on, in several of our lemmas, given a $p$-layered solution, the result will be valid given a choice of $\beta_{p+1}$. Whenever this happens, we always assume that this choice of $\beta_{p+1}$ is consistent with the conditions in Definition \ref{player}, and in particular $2\beta_{p+1}\geq \beta_{p}.$
\end{remark}

With this definition in mind, we are now ready to prove some properties for the angles $\alpha_{i}$ from our $p$-layered solutions.

\begin{lemma}\label{controlangulocorto}
	Let $\sum_{i=0}^{p}\rho_{i}(x,t)$ be a $p$-layered solution with parameters $(N_{i},\beta_{i},k_{i},K_{i})$ for $i=0,1,...,p$. Then, for $p\geq 1$, $\lambda>0$, $t\in[1-\lambda N_{p}^{-\frac{3\beta_{p}}{4}},1]$, we have that, if $N_{p}$ is big enough (depending only on $\beta_{p},\lambda$), then
	$$\alpha_{p}(t)\in[N_{p}^{-\frac{\beta_{p}}{8}}+\frac{\pi}{2}-N_{p}^{-\frac{\beta_{p}}{2}}, N_{p}^{-\frac{\beta_{p}}{8}}+\frac{\pi}{2}+N_{p}^{-\frac{\beta_{p}}{2}}].$$

\end{lemma}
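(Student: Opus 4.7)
The plan is to apply the ODE \eqref{angleeq} from Definition \ref{angle&mod} for the angle moving with velocity $u(\sum_{l=0}^{p-1}\rho_l)$ and estimate how much $\alpha_p(t)$ can drift from its terminal value $\alpha_p(1) = N_p^{-\beta_p/8} + \pi/2$ over the short backward time interval $[1-\lambda N_p^{-3\beta_p/4},1]$. The key input is the a priori control on the lower-layer velocity provided by \eqref{lowfrapriori}.

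First, since $\alpha_p(t)$ moves with velocity $u(\sum_{l=0}^{p-1}\rho_l)$, by \eqref{angleeq} it satisfies an ODE whose right-hand side is a bilinear expression in the matrix $Du(\sum_{l<p}\rho_l)(0,t)$ and in trigonometric functions of $\alpha_p$. Since the trigonometric factors are bounded by $1$, one gets the crude but sufficient pointwise bound
$$|\partial_t \alpha_p(t)| \;\leq\; 2\,\|Du(\textstyle\sum_{l=0}^{p-1}\rho_l)(0,t)\|_{\mathrm{op}} \;\leq\; 2\,\|u(\textstyle\sum_{l=0}^{p-1}\rho_l)\|_{C^1}.$$

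Second, I want to use \eqref{lowfrapriori}, which provides $\|u(\sum_{l<p}\rho_l)\|_{C^1}\leq N_p^{\beta_p/8}$ for $t\in[1-\tilde C N_p^{-3\beta_p/4},1]$. For $\lambda\leq \tilde C$ the lemma's time interval is already contained here; for $\lambda>\tilde C$ one splits the interval and uses the analogous a priori bounds from \eqref{lowfrapriori} applied to the lower indices $i<p$, together with item 4 of Definition \ref{player} which says $\rho_i=0$ well before time $1$, so that on the earlier portions the relevant velocity is only that of even smaller-index layers and the same type of $C^1$ bound applies (only a multiplicative constant depending on $\lambda$ and $\beta_p$ is lost, which is absorbed by taking $N_p$ large).

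Third, integrating the ODE bound between $t$ and $1$ yields
$$|\alpha_p(t)-\alpha_p(1)| \;\leq\; 2N_p^{\beta_p/8}\cdot|1-t| \;\leq\; 2\lambda\, N_p^{\beta_p/8 - 3\beta_p/4} \;=\; 2\lambda\, N_p^{-5\beta_p/8}.$$
Since $5\beta_p/8>\beta_p/2$, for $N_p$ large enough depending only on $\beta_p$ and $\lambda$ one has $2\lambda N_p^{-5\beta_p/8}\leq N_p^{-\beta_p/2}$, which, together with $\alpha_p(1)=N_p^{-\beta_p/8}+\pi/2$, gives exactly the claimed window $\alpha_p(t)\in[N_p^{-\beta_p/8}+\pi/2-N_p^{-\beta_p/2},\,N_p^{-\beta_p/8}+\pi/2+N_p^{-\beta_p/2}]$.

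There is no real obstacle here: the lemma is essentially a straightforward integration of the angular ODE, whose punch relies on the separation of scales $\beta_p/8$ versus $3\beta_p/4$ between the a priori bound on the velocity norm and the length of the time interval. The only care needed is bookkeeping the constants (in particular making sure the choice $N_p$ large enough absorbs the constant $2\lambda$) and, if $\lambda>\tilde C$, concatenating the a priori bounds across the relevant time sub-intervals without losing the exponent gap.
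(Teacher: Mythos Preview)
Your proof is correct and follows essentially the same argument as the paper: bound $|\partial_t\alpha_p|$ via \eqref{angleeq} using the $C^1$ control \eqref{lowfrapriori} on $u(\sum_{l<p}\rho_l)$, then integrate over the short interval to get $|\alpha_p(t)-\alpha_p(1)|\leq C\lambda N_p^{-5\beta_p/8}\leq N_p^{-\beta_p/2}$ for $N_p$ large. Your digression about splitting when $\lambda>\tilde C$ is unnecessary: from properties 7 and 8 in Definition \ref{player} one has $\|\sum_{l=0}^{p-1}\rho_l\|_{C^1}\leq 2N_{p-1}\leq 2N_p^{\beta_p/8}$ for all $t\in[0,1]$ (since $N_{p-1}^{-1}\geq N_p^{-\beta_p/8}$), so the velocity bound is already global and the paper simply uses it without the split.
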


\begin{proof}
	Using \eqref{angleeq}, and defining, for $i=1,2$, 
	\begin{equation}\label{vki}
		v_{p,i}:=u_{i}(\sum_{l=0}^{p-1}\rho_{l})
	\end{equation}
 and using \eqref{lowfrapriori} and \eqref{angleeq}  to bound $v_{p}$ we get

	\begin{align*}
		|\p_{t}\alpha_{p}(t)|=\\
		&|-(\cos(\alpha_{p}(t))\p_{1}v_{p,1}(x=0,t)+\sin(\alpha_{p}(t))\p_{1}v_{p,2}(x=0,t))(-\sin(\alpha_{p}(t)))\\
		&-(\cos(\alpha_{p}(t))\p_{2}v_{p,1}(x=0,t)+\sin(\alpha_{p}(t))\p_{2}v_{p,2}(x=0,t))\cos(\alpha_{p}(t))|\\
		&\leq 2\sum_{i=1,2}\|v_{p,i}\|_{C^1}\leq 2 N_{p}^{\frac{\beta_{p}}{8}}
	\end{align*}
and integrating in time and using that  $N_{p}^{\beta_{p}}$ is big we get, for $t\in[1-\lambda N_{p}^{\frac{-3\beta_{p}}{4}},1]$
\begin{align*}
	&|\alpha_{p}(1)-\alpha_{p}(t)|\leq 2\lambda N_{p}^{-\frac{3\beta_{p}}{4}}\|v_{p,i}\|_{C^1}\leq 4\lambda N_{p}^{\frac{\beta_{p}}{8}}N_{p}^{-\frac{3\beta_{p}}{4}}\leq N_{p}^{\frac{-\beta_{p}}{2}}.
\end{align*}

\end{proof}

This gives us information about the specific behaviour of $\alpha_{p}$ for a $p$-layered solution. We can use this to obtain some useful bounds for the flow generated by $\P_{1}(u(\sum_{l=0}^{p}\rho_{l}))$.

\begin{lemma}\label{p1phi}
	Let $\sum_{l=0}^{p}\rho_{l}(x,t)$ be a $p-$layered solution with parameters $(N_{i},\beta_{i},k_{i},K_{i})$ for $i=0,1,...,p$. Let $\lambda>0$ and  $\phi(x,\tilde{t},t)$ for some $\tilde{t}\in[1-\lambda N_{p}^{-\frac{3\beta_{p}}{4}},1]$ be defined as
	$$\p_{t}\phi(x,\tilde{t},t)=\P_{1}(u(\sum_{l=0}^{p}\rho_{l}))$$
	$$\phi(x,\tilde{t},\tilde{t})=x.$$
	Then, if $N_{p}$ is big enough (depending only on $\beta_{p}$), for $i,j=1,2$, $t\in [1-\lambda N_{p}^{-\frac{3\beta_{p}}{4}},1]$ we have
	$$|\p_{x_{j}}\phi_{i}(x,\tilde{t},t)|\leq C_{\lambda}N_{p}^{\frac{\beta_{p}}{4}},$$
	and, in particular, if $f(x,t)$ fulfills
	$$\p_{t}f(x,t)+\P_{1}(u(\sum_{l=0}^{p}\rho_{l}))\cdot\nabla f(x,t)=0$$
	and if $\text{supp}(f(x,\tilde{t}))\subset B_{a}(0)$ for $\tilde{t}\in [1-\lambda N_{p}^{-\frac{3\beta_{p}}{4}},1]$, then
	$$\text{supp}(f(x,t))\subset B_{2C_{\lambda } N_{p}^{\frac{\beta_{p}}{4}}a}(0)$$
	for $t\in[1-\lambda N_{p}^{-\frac{3\beta_{p}}{4}},1]$
	since the support is transported by the velocity $\P_{1}(u(\sum_{l=0}^{p}\rho_{l}))$ and $u(\sum_{l=0}^{p}\rho_{l})(x=0)=0$.
	
\end{lemma}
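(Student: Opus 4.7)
The map $\phi(x,\tilde t,t)$ is linear in $x$ because $\P_{1}(u(\sum_{l=0}^{p}\rho_{l}))$ is linear. Write $\phi(x,\tilde t, t) = M(t) x$ where the $2\times 2$ matrix $M$ satisfies $\dot M = J(t) M$, $M(\tilde t) = I$, with $J(t) := Du(\sum_{l=0}^p \rho_l)(0,t)$. The conclusion then reduces to $\|M(t)\| \leq C_\lambda N_p^{\beta_p/4}$; the support statement is immediate since $\phi(\cdot,\tilde t,t)$ is a linear map fixing the origin whose operator norm is $\|M(t)\|$.

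A direct Gronwall estimate fails: by \eqref{anguloup}--\eqref{anguloup2}, $\|Du(\rho_p)(0)\|$ may be of order $N_p^{\beta_p}$, so integrating naively over the time window of length $T = \lambda N_p^{-3\beta_p/4}$ yields only $e^{CN_p^{\beta_p/4}}$, which is far too large. The rescue is the hidden rank-one nilpotent structure of the leading term coming from incompressibility together with \eqref{anguloup}--\eqref{anguloup2}. Decompose $J = \tilde J_p + E_p + J_{<p}$ where
$$\tilde J_p(t) := B_p(t)\,C_0 \cos(\alpha_p(t))\, v(\alpha_p(t))\, w(\alpha_p(t))^T, \quad w(\alpha)=(\cos\alpha,\sin\alpha)^T,\ v(\alpha)=(\sin\alpha,-\cos\alpha)^T,$$
so that $w^T v = 0$ and hence $\tilde J_p^{2}=0$. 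By \eqref{anguloup}--\eqref{anguloup2} and \eqref{lowfrapriori} one has $\|E_p\|, \|J_{<p}\| \lesssim N_p^{\beta_p/8}$. By Lemma \ref{controlangulocorto}, $|\cos\alpha_p(t)|\lesssim N_p^{-\beta_p/8}$ on the relevant interval, so $\|\tilde J_p\| \lesssim N_p^{7\beta_p/8}$.

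The plan is to work in the time-dependent orthonormal frame $(w(t), v(t))$: write $M(t) x = y_1(t) w(t) + y_2(t) v(t)$. Using the orthogonality identities $w^T\tilde J_p = 0$, $\tilde J_p v = 0$, $v^T \tilde J_p w = B_p C_0 \cos\alpha_p$, together with $\dot w = -\dot\alpha_p v$, $\dot v = \dot\alpha_p w$, and $|\dot\alpha_p|\lesssim N_p^{\beta_p/8}$ (from \eqref{angleeq} and \eqref{lowfrapriori}), a direct computation gives
$$\dot y_1 = O(N_p^{\beta_p/8})(y_1+y_2), \qquad \dot y_2 = \bigl(B_p C_0 \cos\alpha_p + O(N_p^{\beta_p/8})\bigr)\,y_1 + O(N_p^{\beta_p/8})\,y_2.$$
The only genuinely large coefficient multiplies the slowly-varying variable $y_1$. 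Integrating the first equation gives $|y_1(t)-y_1(\tilde t)|\lesssim \lambda N_p^{-5\beta_p/8}\max_s(|y_1|,|y_2|)$. In the $y_2$-equation, split the large integral as
$$\int_{\tilde t}^t B_p C_0 \cos\alpha_p\cdot y_1\,ds = y_1(\tilde t)\int_{\tilde t}^t B_p C_0 \cos\alpha_p\,ds + \int_{\tilde t}^t B_p C_0 \cos\alpha_p\,(y_1(s)-y_1(\tilde t))\,ds,$$
bounding the first piece by $C\lambda N_p^{\beta_p/8}|y_1(\tilde t)|$ and the second by $C\lambda^2 N_p^{-\beta_p/2}\max_s(|y_1|,|y_2|)$. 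Combining the resulting inequalities and absorbing the small terms for $N_p$ large yields $\max_s(|y_1|,|y_2|)\leq C_\lambda N_p^{\beta_p/8}|x|$, hence $\|M(t)\|\leq C_\lambda N_p^{\beta_p/8}\leq C_\lambda N_p^{\beta_p/4}$, which gives the pointwise bound on $\partial_{x_j}\phi_i$. The support assertion then follows at once from linearity of $\phi$.

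The main obstacle is identifying and exploiting the rank-one nilpotent structure: despite $\|J\|\sim N_p^{\beta_p}$, the dominant part of $J$ has range and kernel orthogonal to each other (forced by the explicit form of $Du(\rho_p)(0)$ in \eqref{anguloup}--\eqref{anguloup2} together with the divergence-free condition), and moreover the scalar factor $\cos\alpha_p$ is small by Lemma \ref{controlangulocorto}. Without recognizing these two features and separating the slow and fast variables via the frame $(w,v)$, only an exponential bound is available, which is insufficient for the polynomial bound claimed.
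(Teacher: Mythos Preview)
Your proof is correct and follows essentially the same strategy as the paper's: both exploit the rank-one nilpotent structure of the leading part of $Du(\rho_p)(0,t)$ by passing to rotated coordinates adapted to the direction $\alpha_p$, after which the only large coefficient ($\sim N_p^{7\beta_p/8}$, using $|\cos\alpha_p|\lesssim N_p^{-\beta_p/8}$) couples the fast variable to the slowly-varying one and a bootstrap closes. The only implementational differences are that the paper uses a \emph{fixed} rotation by $\alpha_p(1)$ (relying on Lemma~\ref{controlangulocorto} to freeze the angle up to $O(N_p^{-\beta_p/2})$) together with a two-step Gronwall, whereas you use the time-dependent frame $(w(\alpha_p(t)),v(\alpha_p(t)))$ and a freeze-and-split on the integral; your version yields the slightly sharper $C_\lambda N_p^{\beta_p/8}$, but note that your absorption step ``for $N_p$ large'' tacitly lets the largeness threshold depend on $\lambda$, while the paper keeps the $\lambda$-dependence entirely in $C_\lambda$ via the exponential Gronwall factor.
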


\begin{proof}
	We will consider $\lambda=1$ in the proof for simplicity, but note that the proof is exactly the same with other values of $\lambda$. According to \eqref{anguloup} and the equations afterwards we have that
	\begin{align*}
		&\bigg|\begin{pmatrix}
			&\p_{1}u_{1}(\rho_{p})(x=0,t)\ &\p_{2}u_{1}(\rho_{p})(x=0,t)\\
			&\p_{1}u_{2}(\rho_{p})(x=0,t)\ &\p_{2}u_{2}(\rho_{p})(x=0,t)
		\end{pmatrix}\\
		&-C_{0}B_{p}(t)\cos(\alpha_{p}(t))\begin{pmatrix}
			&\sin(\alpha_{p}(t))\cos(\alpha_{p}(t))\ &\sin(\alpha_{p}(t))^2\\
			&-\cos(\alpha_{p}(t))^2\ &-\sin(\alpha_{p}(t))\cos(\alpha_{p}(t))
		\end{pmatrix}\bigg|\\
		&\leq \begin{pmatrix}
			&N_{p}^{\frac{\beta_{p}}{8}}\ &N_{p}^{\frac{\beta_{p}}{8}}\\
			&N_{p}^{\frac{\beta_{p}}{8}}\ &N_{p}^{\frac{\beta_{p}}{8}}
		\end{pmatrix}.
	\end{align*}
	Furthermore, by Lemma \ref{controlangulocorto} we have that, for $N_{p}$ big enough
	\begin{align*}
		&\bigg|C_{0}B_{p}(t)\cos(\alpha_{p}(t=1))\begin{pmatrix}
			&\sin(\alpha_{p}(t=1))\cos(\alpha_{p}(t=1))\ &\sin(\alpha_{p}(t=1))^2\\
			&-\cos(\alpha_{p}(t=1))^2\ &-\sin(\alpha_{p}(t=1))\cos(\alpha_{p}(t=1))
		\end{pmatrix}\\
		&-C_{0}B_{p}(t)\cos(\alpha_{p}(t))\begin{pmatrix}
			&\sin(\alpha_{p}(t))\cos(\alpha_{p}(t))\ &\sin(\alpha_{p}(t))^2\\
			&-\cos(\alpha_{p}(t))^2\ &-\sin(\alpha_{p}(t))\cos(\alpha_{p}(t))
		\end{pmatrix}\bigg|\\
		&\leq C\begin{pmatrix}
			&N_{p}^{\frac{\beta_{p}}{2}}\ &N_{p}^{\frac{\beta_{p}}{2}}\\
			&N_{p}^{\frac{\beta_{p}}{2}}\ &N_{p}^{\frac{\beta_{p}}{2}}
		\end{pmatrix}.
	\end{align*}
	Combining these two bounds with \eqref{lowfrapriori} we get, for $N_{p}$ big enough
	\begin{align*}
		&\bigg|\begin{pmatrix}
			&\p_{1}u_{1}(\sum_{l=0}^{p}(\rho_{l}))(x=0,t)\ &\p_{2}u_{1}(\sum_{l=0}^{p}\rho_{l})(x=0,t)\\
			&\p_{1}u_{2}(\sum_{l=0}^{p}\rho_{l})(x=0,t)\ &\p_{2}u_{2}(\sum_{l=0}^{p}\rho_{l})(x=0,t)
		\end{pmatrix}\\
		&-C_{0}B_{p}(t)\cos(\alpha_{p}(t))\begin{pmatrix}
			&\sin(\alpha_{p}(t))\cos(\alpha_{p}(t))\ &\sin(\alpha_{p}(t))^2\\
			&-\cos(\alpha_{p}(t))^2\ &-\sin(\alpha_{p}(t))\cos(\alpha_{p}(t))
		\end{pmatrix}\bigg|\\
		&\leq C\begin{pmatrix}
			&N_{p}^{\frac{\beta_{p}}{2}}\ &N_{p}^{\frac{\beta_{p}}{2}}\\
			&N_{p}^{\frac{\beta_{p}}{2}}\ &N_{p}^{\frac{\beta_{p}}{2}}
		\end{pmatrix}.
	\end{align*}
	If we now write $\phi_{i}(x,\tilde{t},t)=z_{1,i}(t)x_{1}+z_{2,i}(t)x_{2}$, (where we leave the dependence on $\tilde{t}$ implicit to simplify the notation), then $z_{1,i},z_{2,i}$ fulfill
	$$\p_{t}z_{1,i}(t)+z_{1,i}(t)\p_{x_{1}}\P_{1}(u_{1}(\sum_{l=0}^{p}\rho_{l}))+z_{2,i}(t)\p_{x_{1}}\P_{1}(u_{2}(\sum_{l=0}^{p}\rho_{l}))=0$$
	$$\p_{t}z_{2,i}(t)+z_{1,i}(t)\p_{x_{2}}\P_{1}(u_{1}(\sum_{l=0}^{p}\rho_{l}))+z_{2,i}(t)\p_{x_{2}}\P_{1}(u_{2}(\sum_{l=0}^{p}\rho_{l}))=0$$
	$$z_{1,i}(t=\tilde{t})x_{1}+z_{2,i}(t=\tilde{t})x_{2}=x_{i}.$$
	If we now consider 
	$$\tilde{z}_{1,i}(t)=z_{1,i}(t)\cos(\alpha_{p}(t=1))+z_{2,i}(t)\sin(\alpha_{p}(t=1))$$
	$$\tilde{z}_{2,i}(t)=-z_{1,i}(t)\sin(\alpha_{p}(t=1))+z_{2,i}(t)\cos(\alpha_{p}(t=1))$$
	we get
	$$\p_{t}\tilde{z}_{1,i}=C_{0}B_{p}(t)\cos(\alpha_{p}(t))\tilde{z}_{2,i}+a(t)\tilde{z}_{1,i}+b(t)\tilde{z}_{2,i}$$
	$$\p_{t}\tilde{z}_{2,i}=c(t)\tilde{z}_{1,i}+d(t)\tilde{z}_{2,i}$$
	with $|a(t)|,|b(t)|,|c(t)|,|d(t)|\leq CN^{\frac{\beta_{p}}{2}}$.
	If we define $\tilde{Z}_{1,i}(t)=\text{sup}_{s\in[\tilde{t},1]}\tilde{z}_{1,i}(t)$ (we can make the same argument for $s\in[t_{0},\tilde{t}]$ for times smaller than $\tilde{t}$) we have that, by a Gronwall type estimate, using the bounds for $t-\tilde{t}$,
	$$|\tilde{z}_{2,i}(t)|\leq e^{\int_{\tilde{t}}^{t}d(s)ds}(\tilde{z}_{2,i}(\tilde{t})+\int_{\tilde{t}}^{t}b(s)\tilde{z}_{1,i}(s)ds)\leq e^{N_{p}^{-C\frac{\beta_{p}}{4}}}(1+\tilde{Z}_{1,i}(t)N_{p}^{-\frac{\beta_{p}}{4}})\leq C(1+\tilde{Z}_{1,i}(t)N_{p}^{-\frac{\beta_{p}}{4}}).$$
	Thus
	$$\p_{t}\tilde{Z}_{1,i}(t)\leq C_{0}B_{p}(t)\cos(\alpha_{p}(t))\tilde{z}_{2,i}+a(t)\tilde{z}_{1,i}+b(t)\tilde{z}_{2,i}\leq CN^{\frac{3\beta_{p}}{4}}\tilde{Z}_{1,i}+CN^{\beta_{p}}$$
	so, again by a Gronwall type estimate, using the bounds for $t-\tilde{t}$,
	$$\tilde{Z}_{1,i}(t)\leq e^{\int_{\tilde{t}}^{t}CN_{p}^{\frac{3\beta_{p}}{4}}}(\tilde{Z}_{1,i}(\tilde{t})+CN_{p}^{\frac{\beta_{p}}{4}})\leq CN_{p}^{\frac{\beta_{p}}{4}}.$$
	Finally, using the bounds for $\tilde{z}_{1,i}$, $\tilde{z}_{2,i}$ we obtain
	$$|z_{1,i}|,|z_{2,i}|\leq CN_{p}^{\frac{\beta_{p}}{4}},$$
	and since $\phi_{i}(x,\tilde{t},t)=z_{1,i}(t)x_{1}+z_{2,i}(t)x_{2}$ we have
	$$|\p_{x_{j}}\phi_{i}(x,\tilde{t},t)|\leq CN_{p}^{\frac{\beta_{p}}{4}}$$
	as we wanted to prove.

\end{proof}

 We can also obtain properties about how the $\alpha_{p+1}$ should behave in time when we construct our $p+1$-layered solution.
\begin{lemma}\label{girot0}
	Let $\rho(x,t)=\sum_{l=0}^{p}\rho_{l}(x,t)$ be a $p$-layered solution with parameters $(N_{i},\beta_{i},k_{i},K_{i})$ for $i=0,1,...,p$. If we consider $\alpha_{p+1}$ an angle moving with velocity $u(\rho(x,t))$ with
	$$\alpha_{p+1}(t=1)\in[\frac{\pi}{2},\frac{\pi}{2}+\frac{N_{p}^{-\frac{\beta_{p}}{8}}}{4}]$$
	 we have that, if $N_{p}$ is big enough (depending only on $\beta_{p}$), then if we define
 $\tilde{C}=\frac{16\pi^3}{C_{0}}$ (with $C_{0}$ the constant from Lemma \ref{c00}) there exists
 $t_{0}\in[1-\tilde{C}N_{p}^{-\frac{3\beta_{p}}{4}},1]$ such that
	$$\alpha_{p+1}(t_{0})-\alpha_{p}(t_{0})=-\frac{\pi}{2}$$
	and, for $t\in[t_{0},1]$, we have
	\begin{equation}\label{cotagiro}
		\p_{t}(\alpha_{p+1}(t)-\alpha_{p}(t))\leq 3C_{0}{N_{p}^{\frac{7\beta_{p}}{8}}}(\alpha_{p+1}-\alpha_{p})^2 ,
	\end{equation}
	$$\p_{t}(\alpha_{p+1}(t)-\alpha_{p}(t))\geq C_{0}\frac{N_{p}^{\frac{7\beta_{p}}{8}}}{4}\frac{(\alpha_{p+1}-\alpha_{p})^2}{\pi^2}.$$
\end{lemma}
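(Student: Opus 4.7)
The plan is to derive an ODE-style inequality for the difference $\delta(t):=\alpha_{p+1}(t)-\alpha_{p}(t)$ and then run a comparison argument against an explicit Riccati ODE. Writing $\vec{e}(\alpha):=(\cos\alpha,\sin\alpha)^{t}$ and $\vec{e}^{\perp}(\alpha):=(-\sin\alpha,\cos\alpha)^{t}$, I would subtract the instances of \eqref{angleeq} for $\alpha_{p+1}$ (which moves with $u(\sum_{l=0}^{p}\rho_{l})$) and $\alpha_{p}$ (which moves with $u(\sum_{l=0}^{p-1}\rho_{l})$). Using $u(\sum_{l=0}^{p}\rho_{l}) = u(\sum_{l=0}^{p-1}\rho_{l})+u(\rho_{p})$, the right-hand side splits as $\p_{t}\delta = M + \Delta$, where $M:=-(Du(\rho_{p}))^{t}\vec{e}(\alpha_{p+1})\cdot \vec{e}^{\perp}(\alpha_{p+1})$ is the main piece and $\Delta$ captures the mismatch of applying $(Du(\sum_{l=0}^{p-1}\rho_{l}))^{t}$ at the angle $\alpha_{p+1}$ versus at $\alpha_{p}$.

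For $M$ I would substitute the matrix approximation \eqref{anguloup}--\eqref{anguloup2} for $Du(\rho_{p})$ and exploit the angle-difference identity: the rank-one structure of the leading matrix makes the two pairings collapse to
\[
(Du(\rho_{p}))^{t}\vec{e}(\alpha_{p+1})\cdot\vec{e}^{\perp}(\alpha_{p+1}) = C_{0}B_{p}(t)\cos(\alpha_{p})\sin^{2}(\delta)+O(N_{p}^{\beta_{p}/8}).
\]
By Lemma~\ref{controlangulocorto}, on the relevant interval $-\cos(\alpha_{p}(t))=N_{p}^{-\beta_{p}/8}(1+o(1))>0$, and $B_{p}(t)\in[N_{p}^{\beta_{p}}/2,\,2N_{p}^{\beta_{p}}]$, so $M$ is a strictly positive quantity of order $N_{p}^{7\beta_{p}/8}\sin^{2}(\delta)$ up to the $O(N_{p}^{\beta_{p}/8})$ error. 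For the correction, Lipschitz continuity of $\vec{e},\vec{e}^{\perp}$ in $\alpha$ combined with \eqref{lowfrapriori} gives $|\Delta|\le CN_{p}^{\beta_{p}/8}|\delta|$, which is dominated by $M$ whenever $|\delta|\gg N_{p}^{-3\beta_{p}/4}$—automatic here since $|\delta(1)|\sim N_{p}^{-\beta_{p}/8}$ and $|\delta|$ only grows as $t$ decreases from $1$.

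Applying $|\sin\theta|\le|\theta|$ on the upper side and $|\sin\theta|\ge(2/\pi)|\theta|$ (valid for $|\theta|\le\pi/2$) on the lower side converts $M$ into the polynomial bounds of \eqref{cotagiro}; the slack between the sharp constants ($\approx 2C_{0}$ on top and $\approx C_{0}/\pi^{2}$ on bottom) and the looser ones appearing in the lemma ($3C_{0}$ and $C_{0}/(4\pi^{2})$) absorbs the correction $\Delta$. For the existence of $t_{0}$, I would integrate the lower bound: comparing $\delta$ with the explicit Riccati solution $y$ of $y'=(C_{0}N_{p}^{7\beta_{p}/8}/(4\pi^{2}))y^{2}$ with $y(1)=\delta(1)$ yields $(-1/\delta)'\ge(-1/y)'$, so with matching initial data $\delta(t)\le y(t)<0$ backward in time. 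The explicit formula $y(t)=-[N_{p}^{\beta_{p}/8}+(C_{0}N_{p}^{7\beta_{p}/8}/(4\pi^{2}))(1-t)]^{-1}$ shows $y$ crosses $-\pi/2$ at time $1-\tau_{\ast}$ with $\tau_{\ast}\le(4\pi^{2}/C_{0})N_{p}^{-3\beta_{p}/4}(1+o(1))$; by the intermediate value theorem $\delta$ crosses $-\pi/2$ no later backward in time, producing $t_{0}\in[1-\tilde{C}N_{p}^{-3\beta_{p}/4},1]$ with $\tilde{C}=16\pi^{3}/C_{0}$ (the extra factor of $4\pi$ being a safe margin for the various lower-order errors).

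The main obstacle is the error bookkeeping: verifying that every approximation error from \eqref{anguloup}--\eqref{anguloup2} and \eqref{lowfrapriori} remains strictly smaller than the Riccati main term $\sim N_{p}^{7\beta_{p}/8}\delta^{2}$ uniformly on $[t_{0},1]$. This ultimately reduces to the uniform lower bound $|\delta(t)|\ge \tfrac{1}{2}N_{p}^{-\beta_{p}/8}\gg N_{p}^{-3\beta_{p}/4}$, which is a direct consequence of $|\delta(1)|\ge\tfrac{3}{4}N_{p}^{-\beta_{p}/8}$ combined with monotonicity of $|\delta|$ backward in time (a consequence of the inequality we are trying to prove, so the argument is set up as a bootstrap up to the first time $|\delta|$ would reach $\pi/2$, i.e.\ precisely $t=t_{0}$).
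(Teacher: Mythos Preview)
Your approach is essentially the paper's: both subtract the two instances of \eqref{angleeq}, isolate the $Du(\rho_{p})$ contribution via \eqref{anguloup}--\eqref{anguloup2}, use the rank-one structure to collapse it to $-C_{0}B_{p}(t)\cos(\alpha_{p})\sin^{2}\delta$, bound the residual $v_{p}$-terms by $O(N_{p}^{\beta_{p}/8}|\delta|)$ through \eqref{lowfrapriori}, replace $\sin^{2}$ by $\delta^{2}$, and integrate a Riccati inequality backward from $t=1$. Your bootstrap framing and the explicit comparison with a Riccati supersolution are exactly what the paper does (the paper writes ``upper bound'' but in fact integrates the lower bound, with the same constant $c_{1}$ you use).

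One sign slip: the solution of $y'=cy^{2}$ with $y(1)=-N_{p}^{-\beta_{p}/8}$ and $c=C_{0}N_{p}^{7\beta_{p}/8}/(4\pi^{2})$ is
\[
y(t)=-\bigl[N_{p}^{\beta_{p}/8}-c(1-t)\bigr]^{-1},
\]
with a \emph{minus} in front of $c(1-t)$, not a plus; your formula has $|y|\to0$ backward and would never cross $-\pi/2$. With the sign corrected, $|y|$ increases backward and reaches $\pi/2$ when $c(1-t)=N_{p}^{\beta_{p}/8}-2/\pi$, giving $\tau_{*}\le(4\pi^{2}/C_{0})N_{p}^{-3\beta_{p}/4}(1+o(1))$ as you claimed. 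Also note that the $O(N_{p}^{\beta_{p}/8})$ matrix-approximation error (not just $\Delta$) requires $|\delta|\gg N_{p}^{-3\beta_{p}/8}$, a slightly stronger condition than the $|\delta|\gg N_{p}^{-3\beta_{p}/4}$ you wrote; your uniform lower bound $|\delta|\ge\tfrac{1}{2}N_{p}^{-\beta_{p}/8}$ covers both.
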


\begin{proof}
	Using the notation 
 $$v_{p}:=u(\sum_{l=0}^{p-1}\rho_{l}),v_{p+1}:=u(\rho_{p})$$ 
 and the equation for the evolution of $\alpha_{i}(t)$ \eqref{angleeq}, we have that
	\begin{align*}
		&\p_{t}(\alpha_{p+1}(t)-\alpha_{p}(t))=\\
		&-(Dv_{p+1}-Dv_{p})^{t}
			\begin{pmatrix}
				\cos(\alpha_{p+1}) \\           
				\sin(\alpha_{p+1})
			\end{pmatrix}\cdot
			\begin{pmatrix}
				-\sin(\alpha_{p+1}) \\           
				\cos(\alpha_{p+1})
			\end{pmatrix}\\
		&-(Dv_{p}(x,t))^{t}\begin{pmatrix}
			\cos(\alpha_{p+1})-\cos(\alpha_{p}) \\           
			\sin(\alpha_{p+1})-\sin(\alpha_{p})
		\end{pmatrix}\cdot
		\begin{pmatrix}
			-\sin(\alpha_{p+1}) \\           
			\cos(\alpha_{p+1})
		\end{pmatrix}\\
		&+(Dv_{p}(x,t))^{t}\begin{pmatrix}
			\cos(\alpha_{p}) \\           
			\sin(\alpha_{p})
		\end{pmatrix}\cdot
		\begin{pmatrix}
			-\sin(\alpha_{p})+\sin(\alpha_{p+1}) \\           
			\cos(\alpha_{p})-\cos(\alpha_{p+1})
		\end{pmatrix}.
			\end{align*}
			Furthermore, using \eqref{lowfrapriori} we obtain
		
			\begin{align*}
				\Big| (Dv_{p}(x,t))^{t}\begin{pmatrix}
					\cos(\alpha_{p+1})-\cos(\alpha_{p}) \\           
					\sin(\alpha_{p+1})-\sin(\alpha_{p})
				\end{pmatrix}\cdot
				\begin{pmatrix}
					-\sin(\alpha_{p+1}) \\           
					\cos(\alpha_{p+1})
				\end{pmatrix}\Big|\leq 8|\alpha_{p+1}-\alpha_{p}|N_{p}^{\frac{\beta_{p}}{8}},\\
				\Big|(Dv_{p}(x,t))^{t}\begin{pmatrix}
					\cos(\alpha_{p}) \\           
					\sin(\alpha_{p})
				\end{pmatrix}\cdot
				\begin{pmatrix}
					-\sin(\alpha_{p})+\sin(\alpha_{p+1}) \\           
					\cos(\alpha_{p})-\cos(\alpha_{p+1})
				\end{pmatrix}\Big|\leq 8|\alpha_{p+1}-\alpha_{p}|N_{p}^{\frac{\beta_{p}}{8}}.
			\end{align*}
But we also have, using the properties \eqref{anguloup}-\eqref{anguloup2} for the velocity of a $p$-layered solution, that
$$|(Dv_{p+1}-Dv_{p})^{t}-\begin{pmatrix}
	B_{i}(t)C_{0}\sin(\alpha_{p}(t))\cos(\alpha_{p}(t))^2 & -B_{i}(t)C_{0}\cos(\alpha_{p}(t))^3\\
	B_{i}(t)C_{0}\sin(\alpha_{p}(t))^2\cos(\alpha_{p}(t)) & -B_{i}(t)C_{0}\sin(\alpha_{p}(t))\cos(\alpha_{p}(t))^2
\end{pmatrix}|\leq \begin{pmatrix}
N_{p}^{\frac{\beta_{p}}{8}} & N_{p}^{\frac{\beta_{p}}{8}}\\
N_{p}^{\frac{\beta_{p}}{8}} & N_{p}^{\frac{\beta_{p}}{8}}
\end{pmatrix}.$$	
We now compute that last term, of the evolution equation, which is
\begin{align*}
	-&B_{i}(t)C_{0}\cos(\alpha_{p}(t))\begin{pmatrix}
		\sin(\alpha_{p}(t))\cos(\alpha_{p}(t)) & -\cos(\alpha_{p}(t))^2\\
		\sin(\alpha_{p}(t))^2 & -\sin(\alpha_{p}(t))\cos(\alpha_{p}(t))
	\end{pmatrix}\begin{pmatrix}
	\cos(\alpha_{p+1}(t))\\ \sin(\alpha_{p+1}(t))
	\end{pmatrix}\cdot\begin{pmatrix}
	-\sin(\alpha_{p+1}(t))\\ \cos(\alpha_{p+1}(t))
	\end{pmatrix}\\
	&=-B_{i}(t)C_{0}\cos(\alpha_{p}(t))\sin(\alpha_{p}(t)-\alpha_{p+1}(t))\begin{pmatrix}
		\cos(\alpha_{p}(t))\\ \sin(\alpha_{p}(t))
	\end{pmatrix}\cdot\begin{pmatrix}
		-\sin(\alpha_{p+1}(t))\\ \cos(\alpha_{p+1}(t))
	\end{pmatrix}\\
	&=-B_{i}(t)C_{0}\cos(\alpha_{p}(t))\sin(\alpha_{p}(t)-\alpha_{p+1}(t))^2.
	\end{align*}	
We can now use that, for $x\in[-\frac{\pi}{2},\frac{\pi}{2}]$, 
$$ \frac{x^2}{\pi^2}\leq \sin(x)^2\leq x^2$$	
plus the bounds for $\alpha_{p}(t)$ obtained in Lemma \ref{controlangulocorto} and the bounds for $B(t)$  
$$\frac{N_{p}^{\beta_{p}}}{2}\leq B(t)\leq 2N_{p}^{\beta_{p}},\  \cos(\alpha_{p}(t))\in [-2N_{p}^{-\frac{\beta_{p}}{8}},-\frac{N_{p}^{-\frac{\beta_{p}}{8}}}{2\pi}]$$
to get

$$\p_{t}(\alpha_{p+1}(t)-\alpha_{p}(t))\leq2C_{0}{N_{p}^{\frac{7\beta_{p}}{8}}}(\alpha_{p+1}-\alpha_{p})^2 +|\alpha_{p+1}-\alpha_{p}|16N_{p}^{\frac{\beta_{p}}{8}}+4N_{p}^{\frac{\beta_{p}}{8}},$$
$$\p_{t}(\alpha_{p+1}(t)-\alpha_{p}(t))\geq C_{0}\frac{N_{p}^{\frac{7\beta_{p}}{8}}}{2\pi}\frac{(\alpha_{p+1}-\alpha_{p})^2}{\pi^2} -|\alpha_{p+1}-\alpha_{p}|16N_{p}^{\frac{\beta_{p}}{8}}-4N_{p}^{\frac{\beta_{p}}{8}}.$$
Then, we have that, if $\alpha_{p+1}(t)-\alpha_{p}(t)\in[-\frac{\pi}{2},-\frac{-N_{p}^{-\frac{\beta_{p}}{8}}}{4}]$

$$\p_{t}(\alpha_{p+1}(t)-\alpha_{p}(t))\leq 2C_{0}{N_{p}^{\frac{7\beta_{p}}{8}}}(\alpha_{p+1}-\alpha_{p})^2+(\alpha_{p+1}-\alpha_{p})^2 4N^{\frac{\beta_{p}}{8}}(16N_{p}^{\frac{\beta_{p}}{8}}+16N_{p}^{\frac{\beta_{p}}{4}}),$$
$$\p_{t}(\alpha_{p+1}(t)-\alpha_{p}(t))\geq C_{0}\frac{N_{p}^{\frac{7\beta_{p}}{8}}}{2\pi}\frac{(\alpha_{p+1}-\alpha_{p})^2}{\pi^2} -(\alpha_{p+1}-\alpha_{p})^{2}4N_{p}^{\frac{\beta_{p}}{8}}(16N_{p}^{\frac{\beta_{p}}{8}}+16N_{p}^{\frac{\beta_{p}}{4}}).$$
Finally, using that $N_{p}$ is big, we obtain

$$\p_{t}(\alpha_{p+1}(t)-\alpha_{p}(t))\leq  3C_{0}{N_{p}^{\frac{7\beta_{p}}{8}}}(\alpha_{p+1}-\alpha_{p})^2 ,$$
$$\p_{t}(\alpha_{p+1}(t)-\alpha_{p}(t))\geq C_{0}\frac{N_{p}^{\frac{7\beta_{p}}{8}}}{4\pi}\frac{(\alpha_{p+1}-\alpha_{p})^2}{\pi^2}.$$
Note that, since the time derivative is positive and the bounds obtained are correct as long as $\alpha_{p+1}(t)-\alpha_{p}(t)\in[-\frac{\pi}{2},-\frac{-N_{p}^{-\frac{\beta_{p}}{8}}}{2}]$ and $t\in[1-\tilde{C} N_{p}^{-\frac{3\beta_{p}}{4}},1]$, the bounds will be correct for $t\in[t_{0},1]$, with $t_{0}$ the smallest time in $[1-\tilde{C}N_{p}^{-\frac{3\beta_{p}}{4}},1]$  such that $\alpha_{p}(t_{0})-\alpha_{p+1}(t_{0})\geq -\frac{\pi}{2}$.

But now, we can integrate backwards in time the upper bound for the derivative of $(\alpha_{p+1}(t)-\alpha_{p}(t))$ to get
$$-\frac{1}{c_{1}(\alpha_{p+1}(t_{0})-\alpha_{p}(t_{0}))}+\frac{1}{c_{1}(\alpha_{p+1}(t=1)-\alpha_{p}(t=1))}\leq(t_{0}-1)$$
$$1-t_{0}\leq \frac{1}{c_{1}\frac{N_{p}^{\frac{-\beta_{p}}{8}}}{4}}$$
where $c_{1}=C_{0}\frac{N_{p}^{\frac{7\beta_{p}}{8}}}{4\pi^3}$.
This gives us $t_{0}\in [1-N_{p}^{-\frac{3}{4}}\frac{16\pi^3}{C_{0}},1]$ which finishes the proof.

\end{proof}

The point of the previous lemmas is that, when we add a new layer $\rho_{p+1}(x,t)$ to our $p$-layered solution, if it is very concentrated around the origin, we expect the angle $\alpha_{p+1}(t)$ to give us some useful insight regarding the evolution of the new layer. More precisely, this captures (part of) the change on the new layer $\rho_{p+1}(x,t)$ coming from the velocity generated by the $\sum_{i=1}^{p}\rho_{i}(x,t)$.

If we, for simplicity, consider now that $$\rho_{p+1}(x,t=1)=\sin(N_{p+1}(x_{1}\cos(\frac{\pi}{2}+N_{p+1}^{-\frac{\beta_{p+1}}{8}})+x_{2}\sin(\frac{\pi}{2}+N_{p+1}^{-\frac{\beta_{p+1}}{8}}))),$$ We can model the effect of the velocity generated by $\rho_{p}$ using the results from Subsection \ref{subsectrunc}, namely we consider
$$\rho_{p+1}(x,t)\approx \sin(N_{p+1}X(x,t))$$
where $X=\P_{M}\tilde{X}(x,t)$
$$\p_{t}\P_{M}\tilde{X}+\P_{M}([\P_{M}u(\sum_{l=0}^{p}\rho_{l})]\cdot\nabla \P_{M}\tilde X)=0$$
$$\tilde{X}(x,t=1)=x_{1}\cos(\frac{\pi}{2}+N_{p+1}^{-\frac{\beta_{p+1}}{8}})+x_{2}\sin(\frac{\pi}{2}+N_{p+1}^{-\frac{\beta_{p+1}}{8}}),$$
and since $X(x,t)$ is a polynomial in $x_{1}$ and $x_{2}$, we can decompose
$$\sin(N_{p+1}X(x,t))=\sin(N_{p+1}\P_{1}X(x,t))\cos(N_{p+1}(X-\P_{1}X))+\cos(N_{p+1}\P_{1}X(x,t))\sin(N_{p+1}(X-\P_{1}X))$$
and each of the summands can be written as $g(x)\sin(N_{p+1}(bx_{1}+ax_{2})+\theta_{0})$, so that we can apply the results from Section \ref{secvelocity}.
Then, if we assume that $\rho_{p+1}$ is very localized around the origin (which, in this simplified case is not true, but it will be a correct assumption when we do the proper construction later on), we can use the approximation
$$u(\rho_{p+1})(x,t)\cdot\nabla \rho_{p}(x,t)\approx u^{0}(\rho_{p+1})(x,t)\cdot(\nabla \rho_{p})(x=0,t).$$
The interesting thing is, this term can actually be approximated very nicely using the angles of $\rho_{p+1}$ and $\rho_{p}$. Namely, if $X(x,t)$ has angle $\alpha_{p+1}$, we then have
$$(u^{0}_{1}(\rho_{p+1}),u^{0}_{2}(\rho_{p+1}))=C_{0}\cos(\alpha_{p+1})(\sin(\alpha_{p+1},-\cos(\alpha_{p+1})))\rho_{p+1}$$
and we have, from \eqref{derivadasrhoi}, that
$$((\p_{x_{1}}\rho_{p})(x=0,t),(\p_{x_{2}}\rho_{p})(x=0,t))\approx A_{p}(t)(\cos(\alpha_{p+1}),\sin(\alpha_{p+1})).$$

We can use this to obtain that

\begin{align*}
	&u^{0}(\rho_{p+1})\cdot\nabla (\sum_{i=1}^{p}\rho_{i})\\
	&\approx \rho_{p+1}A_{p}(t)C_{0}\cos(\alpha_{p+1}(t))[\sin(\alpha_{p+1}(t))\cos(\alpha_{p+1}(t))-\cos(\alpha_{p+1}(t))\sin(\alpha_{p}(t))]\\
	&=\rho_{p+1}A_{p}(t)C_{0}\cos(\alpha_{p+1}(t))\sin(\alpha_{p+1}(t)-\alpha_{p}(t)).
\end{align*}

This suggest approximating the solution by 
$$A_{p+1}(t)\sin(NX(t))$$
with
$$\p_{t}A_{p+1}(t)=A_{p+1}(t)A_{p}(t)C_{0}\cos(\alpha_{p+1}(t))\sin(\alpha_{p+1}(t)-\alpha_{p}(t))+G_{error}(\alpha_{p+1},t)$$
where $G_{error}$ includes the term we ignored when making the approximation $$((\p_{x_{1}}\rho_{p})(x=0,t),(\p_{x_{2}}\rho_{p})(x=0,t))\approx A_{p}(t)(\cos(\alpha_{p+1}),\sin(\alpha_{p+1})).$$

The next lemma obtains some properties of the evolution equation we just obtained for the amplitude of $\rho_{p+1}$.
\begin{lemma}\label{crecimientoAp}
	Let $\rho(x,t)=\sum_{l=0}^{p}\rho_{l}$ be a $p-$layered solution and $\mathcal{A}(t)$ be a function fulfilling
	$$\p_{t}\mathcal{A}(t)=\mathcal{A}(t)[A_{p}(t)C_{0}\cos(\alpha_{p+1}(t))\sin(\alpha_{p}(t)-\alpha_{p+1}(t))+G_{error}(\alpha_{p+1},t)]$$
	$$G_{error}=A_{p,err}(t)C_{0}\cos(\alpha_{p+1}(t))\sin(\alpha_{p,err}(t)-\alpha_{p+1}(t))$$
	with
	$$(A_{p,err}(t)\cos(\alpha_{p,err}),A_{p,err}(t)\sin(\alpha_{p,err}))=(\p_{x_{1}}\sum_{l=0}^{p}\rho_{l}(x=0)-A_{p}\cos(\alpha_{p}),\p_{x_{2}}\sum_{l=0}^{p}\rho_{l}(x=0)-A_{p}\sin(\alpha_{p}))$$
 and $\alpha_{p+1}(t)$ the angle moving with velocity $u(\rho)$ according to Definition \ref{angle&mod}
 with
	$$\alpha_{p+1}(t=1)\in[\frac{\pi}{2},\frac{\pi}{2}+N_{p}^{-1}]$$
	and let $t_{0}\in[1-\tilde{C}N_{p}^{-\frac{3\beta_{p}}{4}},1]$ be the value given by Lemma \ref{girot0} such that
	$$\alpha_{p+1}(t_{0})-\alpha_{p}(t_{0})=-\frac{\pi}{2}.$$
	
	Then we have that, if $N_{p}$ is big enough (depending on $\beta_{p}$)
	$$ e^{K_{+}N_{p}^{\frac{\beta_{p}}{4}}}\geq \frac{\mathcal{A}(t=1)}{\mathcal{A}(t=t_{0})}\geq e^{K_{-}N_{p}^{\frac{\beta_{p}}{8}}}.$$
	Furthermore, for $1\geq s_{2}\geq s_{1}\geq t_{0}$ we have
	\begin{equation}\label{amplitudmonotona}
		\frac{\mathcal{A}(s_{2})}{\mathcal{A}(s_{1})}\geq e^{-2}.
	\end{equation}

\end{lemma}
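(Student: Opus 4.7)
The strategy is to compute $\p_t\ln\mathcal{A}(t)$ directly from the defining ODE and change the variable of integration from $t$ to the angle difference $\psi(t):=\alpha_{p+1}(t)-\alpha_p(t)$, whose evolution is precisely controlled by Lemma \ref{girot0}. Integrating gives
$$\ln\frac{\mathcal{A}(1)}{\mathcal{A}(t_0)}=\int_{t_0}^1 A_p(t)C_0\cos(\alpha_{p+1}(t))\sin(-\psi(t))\,dt+\int_{t_0}^1 G_{error}(\alpha_{p+1}(t),t)\,dt.$$
The key inputs are: Lemma \ref{controlangulocorto}, which gives $\alpha_p(t)=\tfrac{\pi}{2}+N_p^{-\beta_p/8}+O(N_p^{-\beta_p/2})$ and hence $\cos(\alpha_p(t))=-N_p^{-\beta_p/8}(1+O(N_p^{-\beta_p/4}))$; Lemma \ref{girot0}, which gives $\psi(t_0)=-\tfrac{\pi}{2}$, $\psi(1)=-N_p^{-\beta_p/8}+O(N_p^{-1})$, and the two-sided bound $cN_p^{7\beta_p/8}\psi^2\leq\p_t\psi\leq CN_p^{7\beta_p/8}\psi^2$; and the definition of $G_{error}$ combined with \eqref{derivadasrhoi} and \eqref{lowfrapriori}, which yields $|A_{p,err}|\leq CN_p^{\beta_p/8}$ and hence $|G_{error}|\leq CN_p^{\beta_p/8}$.

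First I would dispose of the error integral and the monotonicity bound \eqref{amplitudmonotona}. On $[t_0,1]\subset[1-\tilde{C}N_p^{-3\beta_p/4},1]$ the error contributes $|\int G_{error}\,dt|\leq CN_p^{-5\beta_p/8}\ll 1$. The main integrand is essentially nonnegative on $[t_0,1]$ since $\alpha_{p+1}\in[0,\tfrac{\pi}{2}+O(N_p^{-1})]$ gives $\cos(\alpha_{p+1})\geq -O(N_p^{-1})$, $\psi\in[-\tfrac{\pi}{2},0)$ gives $\sin(-\psi)\geq 0$, and $A_p\geq N_p^{\beta_p}/2>0$; any negative part is $o(1)$ after integration. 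Therefore, for $t_0\leq s_1\leq s_2\leq 1$,
$$\ln\frac{\mathcal{A}(s_2)}{\mathcal{A}(s_1)}\geq -CN_p^{-5\beta_p/8}-o(1)\geq -2,$$
which proves \eqref{amplitudmonotona}.

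For the two-sided bound on $\mathcal{A}(1)/\mathcal{A}(t_0)$ I change variables via $dt=d\psi/\p_t\psi$. From the proof of Lemma \ref{girot0} one extracts $\p_t\psi=-B_p(t)C_0\cos(\alpha_p)\sin^2(\psi)+O(N_p^{\beta_p/8})$, while $\cos(\alpha_{p+1})\sin(-\psi)=\sin^2(\psi)+O(N_p^{-\beta_p/8}|\sin\psi|)$. Substituting, for $\psi$ in the bulk range $[-\tfrac{\pi}{2},-N_p^{-\beta_p/16}]$ the quotient equals
$$\frac{A_p\sin^2(\psi)(1+o(1))}{B_pN_p^{-\beta_p/8}\sin^2(\psi)(1+o(1))}\approx\frac{A_p}{B_p N_p^{-\beta_p/8}}\in\bigl[\tfrac{1}{4}N_p^{\beta_p/8},\,4N_p^{\beta_p/8}\bigr],$$
and integration over this $\Theta(1)$-length range produces the main contribution of order $\Theta(N_p^{\beta_p/8})$ with universal constants. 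The tail $\psi\in[-N_p^{-\beta_p/16},\psi(1)]$ is handled directly in $t$: the integrand $A_pC_0\cos(\alpha_{p+1})\sin(-\psi)$ is at worst $O(N_p^{7\beta_p/8})$ and the corresponding $t$-length is $O(N_p^{-3\beta_p/4})$, giving only $O(N_p^{\beta_p/8})$, absorbed into $K_+$. Choosing $K_-$ small and $K_+$ large as universal constants (with $K_-$ compatible with the constant hardwired in item 2 of Definition \ref{player}) yields $K_-N_p^{\beta_p/8}\leq\ln(\mathcal{A}(1)/\mathcal{A}(t_0))\leq K_+N_p^{\beta_p/8}\leq K_+N_p^{\beta_p/4}$.

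The main obstacle is the careful analysis of the transition region $|\psi|\lesssim N_p^{-\beta_p/8}$, where the $O(N_p^{\beta_p/8})$ additive errors in $\p_t\psi$ and in $\cos(\alpha_{p+1})\sin(-\psi)$ become comparable to the leading $\sin^2(\psi)$-terms. Splitting the integration at the intermediate scale $N_p^{-\beta_p/16}$ cleanly separates the regimes: sharp $\psi$-change of variable asymptotics in the bulk, and a crude direct $t$-estimate in the tail where both the integrand and the available time are power-bounded.
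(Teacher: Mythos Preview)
Your proof is correct and takes a genuinely different route from the paper's. The paper argues directly in the time variable: for the upper bound it simply uses $|A_p C_0\cos\alpha_{p+1}\sin(\alpha_p-\alpha_{p+1})+G_{error}|\leq 4N_p^{\beta_p}$ together with $|1-t_0|\leq \tilde{C}N_p^{-3\beta_p/4}$ to get the (weaker) exponent $K_+N_p^{\beta_p/4}$; for the lower bound it fixes intermediate times $t_1,t_2$ defined by $\psi(t_1)=-\tfrac{\pi}{2}+\tfrac{\pi}{10}$ and $\psi(t_2)=-2N_p^{-\beta_p/8}$, observes that the main integrand is nonnegative on $[t_0,t_2]$, bounds the contribution of $[t_2,1]$ and of $G_{error}$ by $1$ each, and on $[t_0,t_1]$ uses the crude lower bound $\tfrac12 A_p(s)$ for the integrand together with the upper bound on $\partial_t\psi$ from Lemma~\ref{girot0} to get $t_1-t_0\geq CN_p^{-7\beta_p/8}$, yielding $\int_{t_0}^{t_1}\geq CN_p^{\beta_p/8}$. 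Your change of variables $t\mapsto\psi$ is more informative: it exposes that both bounds are actually $\Theta(N_p^{\beta_p/8})$ (you then discard this sharpness to match the stated upper bound), and it explains structurally why the growth factor is set by $A_p/(B_p|\cos\alpha_p|)$. The paper's approach buys simplicity---no need to control the ratio of two nearly-degenerate $\sin^2\psi$ terms or to justify the intermediate cutoff scale---while yours buys a sharper two-sided estimate and makes the mechanism (cancellation of $\sin^2\psi$ between numerator and $\partial_t\psi$) transparent.
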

\begin{proof}
	To obtain the bounds, we note that, by integrating in time the evolution equation for $\mathcal{A}(t)$ we get
	$$\frac{\mathcal{A}(t=1)}{\mathcal{A}(t=t_{0})}=e^{\int_{t_{0}}^{1}[A_{p}(s)\cos(\alpha_{p+1}(s))\sin(\alpha_{p}(s)-\alpha_{p+1}(s))+G_{error}(\alpha_{p+1},s)]ds}.$$
	For the upper bound we just use that 
	$$|A_{p}(t)\cos(\alpha_{p+1}(t))\sin(\alpha_{p}(t)-\alpha_{p+1}(t))+G_{error}(\alpha_{p+1},t)|\leq 4N_{p}^{\beta_{p}}$$
	which combined with the bounds for $|1-t_{0}|$ gives the desired bound after integrating in time.
	
	For the lower bound, we note that, during the times considered, $A_{p}(t)\cos(\alpha_{p+1}(t))\sin(\alpha_{p}(t)-\alpha_{p+1}(t))$ has the same sign as $\cos(\alpha_{p+1}(t))$. Note also that, during the times considered, $\alpha_{p+1}(t)-\alpha_{p}(t)$ is monotonous increasing according to Lemma \ref{girot0}. In particular we can define
	$$t_{1}:=\{t: t\in[t_{0},1], \alpha_{p+1}(t)-\alpha_{p}(t)=-\frac{\pi}{2}+\frac{\pi}{10}\},$$
	$$t_{2}:=\{t: t\in[t_{0},1], \alpha_{p+1}(t)-\alpha_{p}(t)=-2N^{-\frac{\beta_{p}}{8}}_{p}\}.$$
	By Lemma \ref{controlangulocorto}, we have that, for $t\in[t_{0},t_{2}]$ 
	$$\alpha_{p+1}(t)=\alpha_{p}(t)+(\alpha_{p+1}(t)-\alpha_{p}(t))\in [0,\frac{\pi}{2}]$$
	so $\cos(\alpha_{p+1}(t))>0$ and
	\begin{align*}
		&\int_{t_{0}}^{1}A_{p}(s)\cos(\alpha_{p+1}(s))\sin(\alpha_{p}(s)-\alpha_{p+1}(s))ds\\
		&\geq \int_{t_{0}}^{t_{1}}A_{p}(s)\cos(\alpha_{p+1}(s))\sin(\alpha_{p}(s)-\alpha_{p+1}(s))ds+\int_{t_{2}}^{1}A_{p}(s)\cos(\alpha_{p+1}(s))\sin(\alpha_{p}(s)-\alpha_{p+1}(s))ds
	\end{align*}
	and, using that, by Lemma \ref{controlangulocorto} for $t\in[t_{2},1]$ we have $|\cos(\alpha_{p+1}(t))|\leq 2N_{p}^{-\frac12}$, we get
	$$|\int_{t_{2}}^{1}A_{p}(s)\cos(\alpha_{p+1}(s))\sin(\alpha_{p}(s)-\alpha_{p+1}(s))ds|\leq 2N_{p}^{-\frac12}|\int_{t_{2}}^{1}A_{p}(s)ds|\leq 1.$$
	For the other contribution to the integral, we have
	$$\int_{t_{0}}^{t_{1}}A_{p}(s)\cos(\alpha_{p+1}(s))\sin(\alpha_{p}(s)-\alpha_{p+1}(s))ds\geq \frac{1}{2} \int_{t_{0}}^{t_{1}}A_{p}(s)ds$$
	and using \eqref{cotagiro} we have
	$$t_{1}-t_{0}\geq CN_{p}^{\frac{-7\beta_{p}}{8}}$$
	so in particular, 
	$$\int_{t_{0}}^{t_{1}}A_{p}(s)\cos(\alpha_{p+1}(s))\sin(\alpha_{p}(s)-\alpha_{p+1}(s))ds\geq CN_{p}^{\frac{\beta_{p}}{8}}.$$
	But, since, for $N_{p}$ big enough
	\begin{align*}
		&(A_{p,err})^2=(\p_{x_{1}}\sum_{l=0}^{p}\rho_{l}(x=0)-A_{p}\cos(\alpha_{p}))^2+(\p_{x_{2}}\sum_{l=0}^{p}\rho_{l}(x=0)-A_{p}\sin(\alpha_{p})))^2\\
		&\leq 2(2N_{p}^{\frac{\beta_{p}}{8}})^2\leq CN_{p}^{\frac{\beta_{p}}{4}}
	\end{align*}
	so, for big $N_{p}$
	$$|\int_{t_{0}}^{1}G_{error}(\alpha_{p+1},s)ds|\leq CN_{p}^{\frac{\beta_{p}}{8}} N_{p}^{-\frac{3\beta_{p}}{4}}\leq 1$$
	and thus
	$$\frac{\mathcal{A}(t=1)}{\mathcal{A}(t=t_{0})}\geq e^{K_{-}N_{p}^{\frac{\beta_{p}}{8}}}.$$
	Similarly, for \eqref{amplitudmonotona} we just use
	\begin{align*}
		&\frac{\mathcal{A}(s_{2})}{\mathcal{A}(s_{1})}=e^{\int_{s_{1}}^{s_{2}}A_{p}(s)\cos(\alpha_{p+1}(s))\sin(\alpha_{p}(s)-\alpha_{p+1}(s))+G_{error}(\alpha_{p+1},s)ds}\\
		&\geq e^{\int_{t_{2}}^{1}A_{p}(s)\cos(\alpha_{p+1}(s))\sin(\alpha_{p}(s)-\alpha_{p+1}(s))ds+\int_{t_{0}}^{1}G_{error}(\alpha_{p+1},s)ds}\\
		&\geq e^{-2}.
	\end{align*}

\end{proof}
\begin{remark}\label{remarknp1}
	In the definition of the $p$-layered solutions there is some leeway for the relationship between $N_{i}$ and $N_{i+1}$, so when constructing our $p+1$-layered solution from a given $p$-layer solution, we need to make a choice of $N_{p+1}$, and a priori it is not necessarily clear what the choice should be. Given some value of $\beta_{p+1}$ and
	$$\alpha_{p+1}(t=1)\in[\frac{\pi}{2},\frac{\pi}{2}+N_{p}^{-1}]$$
	this implicitly makes a choice of $N_{p+1}$ since we also have
	$$\alpha_{p+1}(t=1)=N_{p+1}^{-\frac{\beta_{p+1}}{8}}+\frac{\pi}{2}.$$
	Given $\beta_{p+1}$, we will choose $N_{p+1}$ such that
	\begin{equation}\label{choicetp}
		\frac{\mathcal{A}(t=1)}{\mathcal{A}(t=t_{0})}=N_{p+1}^{\frac{1}{\beta_{p+1}}}
	\end{equation}
	where $t_{0}$ is given by Lemma \ref{crecimientoAp}.
 Remembering than $t_{0}$ actually depends on $N_{p+1}$ (through the choice of the angle at $t=1$), we need to show that there exists $N_{p+1}$ such that
 $$\frac{\mathcal{A}(t=1)}{\mathcal{A}(t=t_{0})}-N_{p+1}^{\frac{1}{\beta_{p+1}}}=0$$
But then we can use the bounds for
	$\frac{\mathcal{A}(t=1)}{\mathcal{A}(t=t_{0})}$
	from Lemma \ref{crecimientoAp}  and the continuity of $t_{0}$, $\frac{\mathcal{A}(t=1)}{\mathcal{A}(t=t_{0})}$ with respect $\alpha_{p+1}(t=1)$ (which comes from the fact that all the coefficients in the evolution equation are smooth in time and the continuity theory for ODEs), and therefore, continuity with respect to the value of $N_{p+1}$, to obtain that there exists a valid $N_{p+1}$ fulfilling
	$$ e^{K_{+}\beta_{p+1}N_{p}^{\frac{\beta_{p}}{4}}}\geq N_{p+1}\geq e^{K_{-}\beta_{p+1}N_{p}^{\frac{\beta_{p}}{8}}}.$$

\end{remark}

With all this we are ready to give the first order solution operator.

\begin{definition}\label{soloperator}
	Given a p-layered solution and a choice of $\beta_{p+1}, N_{p+1}$, and  $M:=\lceil\frac{1}{\beta_{p+1}^2}\rceil$, we define
	$X(x,t)=\P_{M}\tilde{X}(x,t)$
	$$\p_{t}\P_{M}\tilde{X}+\P_{M}([\P_{M}u(\sum_{l=0}^{p}\rho_{l})]\cdot\nabla \P_{M}\tilde X)=0$$
	$$\tilde{X}(x,t=1)=x_{1}\cos(\frac{\pi}{2}+N_{p+1}^{-\frac{\beta_{p+1}}{8}})+x_{2}\sin(\frac{\pi}{2}+N_{p+1}^{-\frac{\beta_{p+1}}{8}}),$$
	and given a function $f(x)$, we define 
	$$S^{M}_{t_{1},t_{2}}[f(x)\sin(NX(x,t_{1})+\theta_{0})]:=\mathcal{A}_{t_{1}}(t_{2})f(x,t_{2})\sin(NX(x,t_{2})+\theta_{0})$$
	with
	$$\p_{t}\mathcal{A}_{t_{1}}(t)=\mathcal{A}_{t_{1}}(t)[A_{p}(t)C_{0}\cos(\alpha(t))\sin(\alpha_{p}(t)-\alpha(t))+G_{error}(\alpha,t)],$$
	$$G_{error}(t,\alpha)=A_{p,err}(t)C_{0}\cos(\alpha(t))\sin(\alpha_{p,err}(t)-\alpha(t)),$$
	with 
	$$(A_{p,err}(t)\cos(\alpha_{p,err}),A_{p,err}(t)\sin(\alpha_{p,err}))=(\p_{x_{1}}\sum_{l=0}^{p}\rho_{l}(x=0)-A_{p}\cos(\alpha_{p}),\p_{x_{2}}\sum_{l=0}^{p}\rho_{l}(x=0)-A_{p}\sin(\alpha_{p}))$$
	$$\mathcal{A}_{t_{1}}(t_{1})=1$$
	$\alpha(t)$ the angle corresponding to $X(x,t)$ (as in Remark \ref{alphapoly} ) and
	$$\p_{t}f(x,t)=-\P_{1}(u(\sum_{l=0}^{p}\rho_{l}))\cdot\nabla f(x,t)$$
	$$f(x,t=t_{1})=f(x).$$
	We will refer to this operator as the first order solution operator.
	
\end{definition}

\begin{remark}\label{remarkunfsol}
    If we take a look at 
    $$S^{M}_{t_{1},t}[f(x)\sin(NX(x,t_{1})+\theta_{0})]=\mathcal{A}_{t_{1}}(t)f(x,t)\sin(NX(x,t)+\theta_{0}):=h(x,t)\sin(NX(x,t)+\theta_{0})$$ 
    we can compute the time derivative as
    \begin{align*}
       &\frac{d}{dt}S^{M}_{t_{1},t}[f(x)\sin(NX(x,t_{1})+\theta_{0})] =\frac{d}{dt}(\mathcal{A}_{t_{1}}(t)f(x,t)\sin(NX(x,t)+\theta_{0}))\\
       &=(\frac{d}{dt}\mathcal{A}_{t_{1}}(t))f(x,t)\sin(NX(x,t)+\theta_{0})\\
       &-\P_{M}(u(\sum_{l=0}^{p}\rho_{l})) h(x,t)\cdot\nabla \sin(NX(x,t)+\theta_{0})- \P_{1}(u(\sum_{l=0}^{p}\rho_{l})) \sin(NX(x,t)+\theta_{0}) \cdot\nabla h(x,t)
    \end{align*}
and furthermore
\begin{align}
&(\frac{d}{dt}\mathcal{A}_{t_{1}}(t))f(x,t)\sin(NX(x,t)+\theta_{0})\\
&=\mathcal{A}_{t_{1}}(t)f(x,t)\sin(NX(x,t)+\theta_{0})
    [A_{p}(t)C_{0}\cos(\alpha(t))\sin(\alpha_{p}(t)-\alpha(t))+G_{error}(\alpha,t)]\\
    &=-u_{0}(\mathcal{A}_{t_{1}}(t)f(x,t)\sin(NX(x,t)+\theta_{0}))\cdot\nabla \sum_{l=0}^{p}\rho_{l}(x=0,t)
\end{align}
so
\begin{align*}
       &\frac{d}{dt}S^{M}_{t_{1},t}[f(x)\sin(NX(x,t_{1})+\theta_{0})] \\
       &=-u_{0}(h(x,t)\sin(NX(x,t)+\theta_{0}))\cdot\nabla \sum_{l=0}^{p}\rho_{l}(x=0,t)\\
       &-\P_{M}(u(\sum_{l=0}^{p}\rho_{l})) h(x,t)\cdot\nabla \sin(NX(x,t))- \P_{1}(u(\sum_{l=0}^{p}\rho_{l})) \sin(NX(x,t)) \cdot\nabla h(x,t).
    \end{align*}

\end{remark}
\begin{remark}\label{evolsol0}
	If we consider a source term $F(x,s)$ fulfilling
 $$S^{M}_{s,t}[F(x,s)]=h(x,t,s)\sin(NX(x,t))$$ 
 and then we define
	$$w(x,t)\sin(NX(x,t)):=\int_{\tilde{t}}^{t}S^{M}_{s,t}[F(x,s)]ds=\sin(NX(x,t))\int_{\tilde{t}}^{t}h(x,t,s)ds$$
	we have that
	\begin{align*}
		\p_{t}w(x,t)&=h(x,t,t)\\&-\int_{\tilde{t}}^{t}[\P_{1}(u(\sum_{l=0}^{p}\rho_{l})(x,t))\cdot\nabla  h(x,t,s)+\frac{u^{0}(h(x,t,s)\sin(NX(x,t)))}{\sin(NX(x,t))}\cdot\nabla (\sum_{l=0}\rho_{l}(x=0,t))]ds
	\end{align*}
	and noting that we can write
	$$\frac{u^{0}(h(x,t,s)\sin(NX(x,t))}{\sin(NX(x,t))}\cdot\nabla(\sum_{l=0}\rho_{l}(x=0,t))=H(x,t)h(x,t,s)$$
 and
    $$\frac{u^{0}(w(x,t)\sin(NX(x,t))}{\sin(NX(x,t))}\cdot\nabla(\sum_{l=0}\rho_{l}(x=0,t))=H(x,t)w(x,t)$$
	we get
	\begin{align*}
		\p_{t}w(x,t)&=\frac{F(x,t)}{\sin(NX(x,t))}-\int_{\tilde{t}}^{t}[\P_{1}(u(\sum_{l=0}^{p}\rho_{l}))\cdot\nabla h(x,t,s)+h(x,t,s)H(x,t)]ds\\
		&=\frac{F(x,t)}{\sin(NX(x,t))}-\P_{1}(u(\sum_{l=0}^{p}\rho_{l}))\cdot\nabla \int_{\tilde{t}}^{t} h(x,t,s)ds-H(x,t)\int_{\tilde{t}}^{t}h(x,t,s)ds\\
		&=\frac{F(x,t)}{\sin(NX(x,t))}-\P_{1}(u(\sum_{l=0}^{p}\rho_{l}))\cdot\nabla w(x,t)-H(x,t)w(x,t)
	\end{align*}
	so that
	\begin{align*}
		\p_{t}&(w(x,t)\sin(NX(x,t)))=\p_{t}\int_{\tilde{t}}^{t}S^{M}_{s,t}[F(x,s)]ds\\
		=&F(x,t)-\sin(NX(x,t))\P_{1}(u(\sum_{l=0}^{p}\rho_{l}))\cdot\nabla w(x,t)-u^{0}(\sin(NX(x,t))w(x,t))\cdot\nabla(\sum_{l=0}^{p}\rho_{l}(x=0,t))\\
		&-\P_{M}(u(\sum_{l=0}^{p}\rho_{l})) w(x,t)\cdot\nabla \sin(NX(t)).
	\end{align*}

Note that the same result holds if we use $\sin(NlX(x,t)+\theta_{0})$ instead of $\sin(NX(x,t))$.

\end{remark}

\begin{remark}
	The purpose of this operator is that, if we add a perturbation to a $p-$layered solution at time $t_{1}$, it will give us an approximation for how this perturbation will look at time $t_{2}$. For this approximation to be somewhat reasonable there are several implicit assumptions: $\sin(NX(x,t))$ is expected to be the higher frequency term, which is why we approximate the transport over this term to a relatively high degree of accuracy. $f(x,t)$, on the other hand, is supposed to be some lower frequency cut-off function, which justifies approximating the transport of this part by a first order approximation (we approximate the velocity by a linear function, instead of a higher order polynomial). Finally, the evolution in $\mathcal{A}_{t_{1}}(t)$, the amplitude, is obtained from approximating $u(\mathcal{A}_{t_{1}}(t)f(x,t)\sin(NX(x,t)))\approx u^{0}(\mathcal{A}_{t_{1}}(t)f(x,t)\sin(NX(x,t)))$.
	
	We will use this solution operator to construct our $p+1$-layered solution, by fixing the value of the new layer at a fixed time, using the solution operator to study its evolution, and then we compute the error we are committing when we use this solution operator instead of actually solving IPM. We can then apply the solution operator to (most of) these errors, and iterating this process allow us to obtain the behavior of the $p+1$-layered solution with an error that can be as regular as we want.
\end{remark}
Note that we can combine our definition of $X(x,t)$ with the properties obtained in subsection \ref{subsectrunc} to obtain useful bounds for the coefficients of $X(x,t)$. More precisely, we have the following lemma.

\begin{lemma}
	Given a p-layered solution and a choice of $\beta_{p+1},N_{p+1}$,  if we define $M:=\lceil\frac{1}{\beta_{p+1}^2}\rceil$ and $X(x,t)$ defined as
	 $X:=\P_{M}\tilde{X}(x,t)$ with
	$$\p_{t}\P_{M}\tilde{X}+\P_{M}([\P_{M}u(\sum_{l=0}^{p}\rho_{l})]\cdot\nabla \P_{M}\tilde X)=0$$
	$$\tilde{X}(x,t=1)=x_{1}\cos(\frac{\pi}{2}+N_{p+1}^{-\frac{\beta_{p+1}}{8}})+x_{2}\sin(\frac{\pi}{2}+N_{p+1}^{-\frac{\beta_{p+1}}{8}}),$$
 then we have that, if $N_{p}$ is big enough (depending only on $\beta_{p}$) for $t\in [t_{0},1]$, 
	$$X(x,t)=\sum_{i+j=1,...,M,\ i,j\geq 0}c_{i,j}(t)x_{1}^ix_{2}^{j}$$
	with
	$$|c_{i,j}|\leq C_{\beta_{p}} N_{p}^{C_{\beta_{p}}}.$$
\end{lemma}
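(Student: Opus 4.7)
The plan is to realize the claim as a direct application of Lemma \ref{controlpolinomio} to the system defining $\tilde{X}$, taking the velocity there to be $u := u(\sum_{l=0}^{p}\rho_{l})$, the truncation orders $k = l = M = \lceil 1/\beta_{p+1}^{2}\rceil$, and $T = 1$. The initial datum $\tilde{X}(x,1) = \cos(\tfrac{\pi}{2}+N_{p+1}^{-\beta_{p+1}/8})x_{1}+\sin(\tfrac{\pi}{2}+N_{p+1}^{-\beta_{p+1}/8})x_{2}$ has coefficients of modulus at most $1$, so by writing it as $\lambda_{1}x_{1}+\lambda_{2}x_{2}$ with $|\lambda_{i}|\leq 1$ and splitting by linearity the hypothesis of Lemma \ref{controlpolinomio} is met (the sign of $\lambda_{1}$ plays no role in its proof, which just propagates an inductive bound starting from $g_{1,0}(t)=g_{1,0}(T)$ and $g_{0,1}(t)=g_{0,1}(T)$).

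The lemma requires two inputs: a uniform constant $M_{\phi}$ bounding $|\p_{x_{j}}\phi_{i}|$ and $|\p_{x_{j}}(\phi^{-1})_{i}|$ for the flow of $\P_{1}(u(\sum_{l=0}^{p}\rho_{l}))$ on $[t_{0},1]$, and a bound on $\|u(\sum_{l=0}^{p}\rho_{l})\|_{C^{M}}$. The first follows from Lemma \ref{p1phi} applied with $\lambda=\tilde{C}$: the time interval $[t_{0},1]$ is contained in $[1-\tilde{C}N_{p}^{-3\beta_{p}/4},1]$ by property (4) of a $p$-layered solution, giving $M_{\phi}\leq CN_{p}^{\beta_{p}/4}$; the same bound for $\phi^{-1}$ is automatic because $\P_{1}(u)$ is divergence-free, so the linear flow $\phi(\cdot,\tilde{t},t)=A(t)\,\cdot$ satisfies $\det A(t)=1$ and the $2\times 2$ inversion formula makes every entry of $D\phi^{-1}$ a signed entry of $D\phi$. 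For the second, the Calder\'on--Zygmund representation $u=K\ast\nabla^{\perp}\rho$ with $\rho$ compactly supported in $B_{1}(0)$ yields $\|u\|_{C^{M}}\leq C_{M}(\|\rho\|_{L^{2}}+\|\rho\|_{C^{M+1}})$; and the constraint $2\beta_{p+1}\geq\beta_{p}$ in Definition \ref{player} gives $M+1\leq\lceil 4/\beta_{p}^{2}\rceil+1\leq K_{p}$, so property (7) of a $p$-layered solution provides $\|u(\sum_{l=0}^{p}\rho_{l})\|_{C^{M}}\leq C_{M}N_{p}^{M+1}$.

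Plugging these two estimates into Lemma \ref{controlpolinomio}, using $|T-t|\leq\tilde{C}N_{p}^{-3\beta_{p}/4}\leq\tilde{C}$, and identifying $c_{i,j}(t)=g_{i,j}(t)/(i!\,j!)$ (since $X=\P_{M}\tilde{X}$ is nothing but the Taylor expansion encoded in the $g_{i,j}$), I obtain
$$|c_{i,j}(t)|\leq C_{i+j}(CN_{p}^{\beta_{p}/4})^{i+j+2}(C_{M}N_{p}^{M+1})^{i+j-1}.$$
For $i+j\leq M\leq 4/\beta_{p}^{2}+1$, the total exponent of $N_{p}$ is at most $(M+2)\beta_{p}/4+(M-1)(M+1)$, which is bounded by a constant depending only on $\beta_{p}$ (since $M$ does, via the constraint on $\beta_{p+1}$); this yields the claimed estimate $|c_{i,j}|\leq C_{\beta_{p}}N_{p}^{C_{\beta_{p}}}$.

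The only point requiring care is matching the time interval and the parameters correctly: the flow estimate from Lemma \ref{p1phi} is valid only on a short window around $t=1$, and one must verify both that this window covers $[t_{0},1]$ and that $M+1\leq K_{p}$ so that the $C^{M+1}$ norm of the $p$-layered density is controlled. Both checks are immediate from the quantitative inequalities built into Definition \ref{player}, so there is no substantive obstacle beyond bookkeeping.
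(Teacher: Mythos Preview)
Your proof is correct and follows essentially the same route as the paper's: both apply Lemma \ref{controlpolinomio} together with the flow bound from Lemma \ref{p1phi}, using $K_{p}\geq M+1$ to control $\|u(\sum_{l=0}^{p}\rho_{l})\|_{C^{M}}$ via property (7). Your write-up is in fact more detailed than the paper's (e.g., the $\det A(t)=1$ argument for $D\phi^{-1}$ and the explicit Calder\'on--Zygmund estimate); the only minor slip is that the inclusion $[t_{0},1]\subset[1-\tilde{C}N_{p}^{-3\beta_{p}/4},1]$ comes from Lemma \ref{girot0}, not property (4).
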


\begin{proof}
	We can apply Lemmas \ref{controlpolinomio} and  \ref{p1phi} to get, for $0\leq i+j\leq M$ (using $K_{p}\geq M+1$)
	$$|c_{i,j}|\leq C_{M}N_{p}^{\frac{\beta_{p}}{4}(i+j+2)}\|u(x,t)\|^{i+j-1}_{C^{M}}\leq C_{M}N^{\frac{\beta_{p}}{4}(M+2)}N_{p}^{M}\ln(N_{p})$$
	as we wanted to prove.
\end{proof}
\begin{corollary}\label{cotasx1np1}
		Given a p-layered solution and a choice of $\beta_{p+1},N_{p+1}$,  if we define $M:=\lceil\frac{1}{\beta_{p+1}^2}\rceil$ and $X(x,t)$ defined as
	$X:=\P_{M}\tilde{X}(x,t)$ with
	$$\p_{t}\P_{M}\tilde{X}+\P_{M}([\P_{M}u(\sum_{l=0}^{p}\rho_{l})]\cdot\nabla \tilde X)=0$$
	$$\tilde{X}(x,t=1)=x_{1}\cos(\frac{\pi}{2}+N_{p+1}^{-\frac{\beta_{p+1}}{8}})+x_{2}\sin(\frac{\pi}{2}+N_{p+1}^{-\frac{\beta_{p+1}}{8}}),$$
	with $t_{0}$ as in Lemma \ref{girot0}, 
	 then we have that, for $t\in [t_{0},1]$, 
	$$X(x,t)=\sum_{j=1}^{M}\sum_{i=0}^{j}c_{i,j}(t)x_{1}^ix_{2}^{j-i}$$
	with
	$$|c_{i,j}|\leq C_{\beta_{p}} \ln(N_{p+1})^{C_{\beta_{p},\beta_{p+1}}}.$$

\end{corollary}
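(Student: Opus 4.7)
The plan is to derive this corollary as essentially an immediate consequence of the preceding lemma by exploiting the large exponential gap between $N_p$ and $N_{p+1}$ that is built into the definition of $p$-layered solutions.

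First, I would invoke the preceding lemma directly: it already provides the decomposition
$$X(x,t)=\sum_{j=1}^{M}\sum_{i=0}^{j}c_{i,j}(t)x_{1}^ix_{2}^{j-i}$$
together with the bound $|c_{i,j}(t)|\leq C_{\beta_{p}} N_{p}^{C_{\beta_{p}}}$ on $[t_0,1]$, where $M=\lceil 1/\beta_{p+1}^2\rceil$. The only difference in the corollary is the replacement of the polynomial factor $N_p^{C_{\beta_p}}$ with the polylogarithmic factor $\ln(N_{p+1})^{C_{\beta_p,\beta_{p+1}}}$. So the entire task reduces to comparing these two quantities.

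The key ingredient is condition (2) of Definition \ref{player}, which guarantees $N_{p+1}\geq e^{K_{-}\beta_{p} N_{p}^{\beta_{p+1}/8}}$. Taking logarithms and solving for $N_p$ yields
$$N_{p}\leq\left(\frac{\ln(N_{p+1})}{K_{-}\beta_{p}}\right)^{8/\beta_{p+1}}.$$
Raising to the power $C_{\beta_p}$ and absorbing the $\beta_p$-dependent prefactor into a new constant $C_{\beta_p,\beta_{p+1}}$ gives exactly $N_{p}^{C_{\beta_p}}\leq C_{\beta_p,\beta_{p+1}}\ln(N_{p+1})^{C_{\beta_p,\beta_{p+1}}}$, which substituted into the previous lemma's bound completes the proof.

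There is no real obstacle here; the corollary is a bookkeeping statement that recasts the bound in terms of $\ln(N_{p+1})$ rather than $N_p$. This reformulation is what will actually be useful downstream, since in the iterative construction of the $(p+1)$-layered solution the natural small parameters are negative powers of $N_{p+1}$, and factors of $\ln(N_{p+1})^{C}$ are trivially absorbed against any such negative power of $N_{p+1}$, whereas factors of $N_p^{C_{\beta_p}}$ would not be.
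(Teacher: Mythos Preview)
Your proposal is correct and is exactly the implicit argument the paper relies on: the corollary is stated without proof in the paper, and the conversion $N_p^{C}\leq C_{\beta_p,\beta_{p+1}}\ln(N_{p+1})^{C_{\beta_p,\beta_{p+1}}}$ via the exponential lower bound on $N_{p+1}$ is precisely what the authors spell out in the list of ``main properties'' immediately preceding Lemma~\ref{itera}. One minor bookkeeping point: since the $(p+1)$-st layer has not yet been added, the relevant lower bound for $N_{p+1}$ comes from Remark~\ref{remarknp1} rather than from property~(2) of Definition~\ref{player}, but the form of the bound is the same and your argument goes through unchanged.
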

\begin{remark}
    Note that we have relabeled the $c_{i,j}$, so that the coefficient $i$ actually gives us the total order of derivatives. We will from now on use this notation, since it is more convenient in the next lemmas we need to prove.
\end{remark}
Before we can finally start the inductive process that will allow us to construct our $p+1-$layered solution given a $p$-layered solution, there is a couple of technical hurdles that we want to deal with.
First, since we will be dealing with functions of the form $f(x)\sin(P(x))$, with $P(x)$ a polynomial, we will obtain some important properties regarding the regularity of this kind of functions.

 Second, even though we have obtained useful properties about $u^{k}$ (our approximations for the velocity) when applied to something of the form $f(x)\sin(Nax_{1}+Nbx_{2}+C)$, we want to work with the more general functions
$$f(x)\sin(NlX(x,t)+\theta_{0}).$$
Therefore, we need to define precisely what the operator $u^{k}$ means when applied to this kind of functions, and we would also want to obtain useful bounds for $u^k$ in that case.

\begin{lemma}\label{boundsfinalproduct}
	Given $c_{1},c_{2}>0$, a smooth function $f(x)$ with support in $B_{1}(0)$, $K\in\mathds{N}$ and 
	$$g_{N}(x)=\sum_{j=0}^{K}\sum_{i=0}^{j}c_{i,j}x_{1}^{i}x_{2}^{j-i}$$
	with
	$$|c_{i,j}|\leq c_{1}\ln(N)^{c_{2}}$$
	for $N\geq 2$, $i\geq 1$.
	Then we have that
	$$\|f(x)\sin(Ng_{N}(x))\|_{C^{j}}\leq C_{K,c_{1},j}\sum_{i=0}^{j}\|f(x)\|_{C^{i}}N^{j}\ln(N)^{C_{c_{2},j}}.$$
\end{lemma}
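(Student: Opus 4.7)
The plan is to proceed by the combinatorial workhorses of calculus: the Leibniz rule to split the product and Fa\`a di Bruno's formula to differentiate the composition $\sin \circ (Ng_N)$. First, since $\mathrm{supp}(f) \subset B_1(0)$, we only need pointwise bounds on $B_1(0)$. On this ball, for any multi-index $\alpha$ with $|\alpha|\geq 1$, the polynomial $g_N$ satisfies $|D^\alpha g_N(x)| \leq C_{K,|\alpha|} c_1 \ln(N)^{c_2}$, since differentiation only involves coefficients $c_{i,j}$ with $j \geq 1$, all of which are controlled by $c_1 \ln(N)^{c_2}$, and the monomials contribute at most $1$ on $B_1(0)$.

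Next, I would apply the multivariate Leibniz rule to obtain, for $|\beta|=j$,
\begin{equation*}
D^\beta\bigl(f(x)\sin(Ng_N(x))\bigr) = \sum_{\gamma \leq \beta} \binom{\beta}{\gamma} D^{\beta-\gamma} f(x)\, D^\gamma \sin(Ng_N(x)).
\end{equation*}
For the second factor with $|\gamma| = k$, Fa\`a di Bruno's formula expresses $D^\gamma \sin(Ng_N)$ as a finite sum of terms of the form
\begin{equation*}
C_{\gamma,\pi}\, \sin^{(\ell)}\!\bigl(Ng_N(x)\bigr) \prod_{r=1}^\ell N\, D^{\gamma_r} g_N(x),
\end{equation*}
where $1 \leq \ell \leq k$, each $|\gamma_r| \geq 1$, and $\sum_r |\gamma_r| = k$. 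Using the $g_N$ bound above, each such term is controlled by $C_{K,k} c_1^\ell N^\ell \ln(N)^{c_2 \ell}$. Since $N\geq 2$ and $\ell \leq k$, we have $N^\ell \ln(N)^{c_2 \ell} \leq C_{c_2,k}\bigl(1 + N^k \ln(N)^{c_2 k}\bigr)$, so the whole derivative satisfies
\begin{equation*}
|D^\gamma \sin(Ng_N(x))| \leq C_{K,c_1,k}\, N^k \ln(N)^{c_2 k}, \qquad |\gamma|=k.
\end{equation*}

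Plugging this back into Leibniz and using $N^k \leq N^j$ together with $\ln(N)^{c_2 k} \leq C_{c_2,j}(1+\ln(N)^{c_2 j})$ for $k \leq j$ and $N \geq 2$, I would conclude
\begin{equation*}
\|D^\beta(f\sin(Ng_N))\|_{L^\infty} \leq C_{K,c_1,j}\, N^j \ln(N)^{c_2 j} \sum_{i=0}^j \|f\|_{C^i},
\end{equation*}
which yields the claim with $C_{c_2,j} := c_2 j$. The only subtlety is ensuring that the logarithmic factors are tracked consistently across both large and moderate values of $N$; for $\ln N \geq 1$ the bookkeeping is monotone in $\ell$, while for $2 \leq N < e$ the quantities $\ln(N)^{c_2 \ell}$ are uniformly bounded and can be absorbed into the prefactor $C_{K,c_1,j}$. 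Beyond that, the argument is essentially a clean application of Leibniz plus Fa\`a di Bruno, with no genuine analytic obstacle.
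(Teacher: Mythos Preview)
Your proof is correct and follows essentially the same route as the paper: split the product with the Leibniz rule, then bound $D^\gamma\sin(Ng_N)$ on $B_1(0)$ using that each derivative of $g_N$ is controlled by $c_1\ln(N)^{c_2}$ there. The only difference is cosmetic: you spell out the Fa\`a di Bruno step explicitly and track the exponent $\ell$, whereas the paper simply asserts $\|\partial_{x_1}^{j-i}\sin(Ng_N)\|_{L^\infty(B_1(0))}\leq C_{K,c_1,j}N^{j-i}\ln(N)^{c_2 j}$ and notes the same argument works for a generic $D^j$. Your handling of the $2\le N<e$ regime and your observation that only coefficients with total degree $\ge 1$ enter after one differentiation are both fine and consistent with how the lemma is actually used downstream.
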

\begin{proof}
	To simplify notation, we will show 
	$$\|\p^{j}_{x_{1}}(f(x)\sin(Ng_{N}(x)))\|_{L^{\infty}}\leq C_{K,c_{1},j}\sum_{i=0}^{j}\|f(x)\|_{C^{i}}N^{j}\ln(N_{p})^{C_{c_{2},j}},$$
	but note that the exact same proof works with a generic derivative of order $j$ $D^{j}$.
	By applying the Leibniz rule we have
	\begin{align*}
		&|\p^{j}_{x_{1}}f(x)\sin(Ng_{N}(x))|\leq C_{j}|\sum_{i=0}^{j}(\p^{i}_{x_{1}}f(x))(\p_{x_{1}}^{j-i}\sin(Ng_{N}(x)))|\\
		&\leq C_{j}\sum_{i=0}^{j}\|f\|_{C^{i}}\|\p_{x_{1}}^{j-i}\sin(Ng_{N}(x))\|_{L^{\infty}(B_{1}(0))}\leq  C_{K,c_{1},j}\sum_{i=0}^{j}\|f\|_{C^{i}}N^{j-i}\ln(N)^{c_{2}j}
	\end{align*}
	as we wanted to prove.
\end{proof}

\begin{lemma}\label{auxuk}
	Given $\beta\in (0,\frac{1}{2}),(a_{i})_{i\in\mathds{N}},(b_{i})_{i\in\mathds{N}}$, $c,d>0$ $K\in\mathds{N}$, for $N>2$ if we have a smooth function $f_{N}(x)$ fulfilling $\text{supp}(f_{N})\subset B_{N^{-\beta}\ln(N)^{4}}(0)\subset B_{1}(0)$,
	$$\|f_{N}\|_{C^{j}}\leq a_{j}N^{j\beta}\ln(N)^{b_{j}}$$
	and $g_{N}(x)$ fulfilling
	$$g_{N}(x)=\sum_{j=2}^{K}\sum_{i=0}^{j}c_{i,j}x_{1}^{i}x_{2}^{j-i}$$
	with $|c_{i,j}|\leq c\ln(N)^{d}$, then we have that, for $k,i\in\mathds{N}$, $k\geq i$, there exists $g^{1}_{k,i},g^{2}_{k,i}\in C^{\infty}$ such that $\text{supp}(g^{1}_{k,i}),\text{supp}(g^{2}_{k,i})\subset B_{N^{-\beta}\ln(N)^{4}}(0)$ and
	$$\frac{\partial^{k}}{\partial x_{1}^{k-i}\partial x^{i}_{2}}f_{N}(x)\sin(Ng_{N}(x)+\theta_{0})=g^{1}_{k,i}(x)\sin(Ng_{N}(x)+\theta_{0})+g^{2}_{k,i}(x)\cos(Ng_{N}(x)+\theta_{0})$$
	and there exists $(\bar{a}^{k}_{j})_{j\in\mathds{N}}$, $(\bar{b}^{k}_{j})_{j\in\mathds{N}}$ (depending on  $\beta,(a_{i})_{i\in\mathds{N}},(b_{i})_{i\in\mathds{N}}$, $c,d,k$ and $K$) such that, 
	$$\|g^{1}_{k,i}\|_{C^{j}},\|g^{2}_{k,i}\|_{C^{j}}\leq \bar{a}^{k}_{j}N^{k(1-\beta)+j\beta}\ln(N)^{\bar{b}^{k}_{j}}.$$
\end{lemma}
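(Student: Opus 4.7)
The plan is to proceed by induction on the total order $k$ of differentiation. The base case $k=0$ (so $i=0$) is immediate: take $g^{1}_{0,0}:=f_{N}$ and $g^{2}_{0,0}:=0$, and the hypothesis $\|f_{N}\|_{C^{j}}\leq a_{j}N^{j\beta}\ln(N)^{b_{j}}$ is exactly the required bound with $\bar{a}^{0}_{j}:=a_{j}$, $\bar{b}^{0}_{j}:=b_{j}$.

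For the inductive step, assume the decomposition holds at total order $k$, namely
$$\frac{\partial^{k}}{\partial x_{1}^{k-i}\partial x_{2}^{i}}(f_{N}\sin(Ng_{N}+\theta_{0}))=g^{1}_{k,i}\sin(Ng_{N}+\theta_{0})+g^{2}_{k,i}\cos(Ng_{N}+\theta_{0}).$$
Applying one more derivative $\partial_{x_{\ell}}$, $\ell\in\{1,2\}$, and using the product and chain rules, the decomposition is preserved with
$$g^{1}_{k+1,\cdot}=\partial_{x_{\ell}}g^{1}_{k,i}-N(\partial_{x_{\ell}}g_{N})\,g^{2}_{k,i},\qquad g^{2}_{k+1,\cdot}=\partial_{x_{\ell}}g^{2}_{k,i}+N(\partial_{x_{\ell}}g_{N})\,g^{1}_{k,i}.$$
Since, by the inductive hypothesis, $g^{1}_{k,i}$ and $g^{2}_{k,i}$ are supported in $\text{supp}(f_{N})\subset B_{N^{-\beta}\ln(N)^{4}}(0)$, the new functions inherit this support.

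The crucial observation is a localized estimate on $g_{N}$: because $g_{N}$ has no terms of degree $\leq 1$, $\partial_{x_{\ell}}g_{N}$ is a polynomial of minimum degree $1$, so on $\text{supp}(f_{N})$
$$|\partial_{x_{\ell}}g_{N}(x)|\leq C_{K}\,c\,\ln(N)^{d+C_{K}}\,N^{-\beta},$$
while $\|\partial_{x_{\ell}}g_{N}\|_{C^{m}(\text{supp}(f_{N}))}\leq C_{K,m}\ln(N)^{d+C_{K,m}}$ for $m\geq 1$ (no $N^{-\beta}$ gain, but no power of $N$ either). Bounding $\|h_{1}h_{2}\|_{C^{j}}\leq C_{j}\sum_{m=0}^{j}\|h_{1}\|_{C^{m}(\text{supp}(h_{2}))}\|h_{2}\|_{C^{j-m}}$ via Leibniz, the $m=0$ contribution to $\|N(\partial_{x_{\ell}}g_{N})g^{2}_{k,i}\|_{C^{j}}$ is at most $CN^{1-\beta}\ln(N)^{C}\cdot\bar{a}^{k}_{j}N^{k(1-\beta)+j\beta}\ln(N)^{\bar{b}^{k}_{j}}$, which is of the target form $N^{(k+1)(1-\beta)+j\beta}$ up to logarithms. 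For $m\geq 1$ the $N$-exponent is $1+k(1-\beta)+(j-m)\beta=(k+1)(1-\beta)+j\beta+\beta(1-m)$, nonpositive in the last term since $m\geq 1$. Finally, $\|\partial_{x_{\ell}}g^{1}_{k,i}\|_{C^{j}}\leq \bar{a}^{k}_{j+1}N^{k(1-\beta)+(j+1)\beta}\ln(N)^{\bar{b}^{k}_{j+1}}$, with exponent $(k+1)(1-\beta)+j\beta+(2\beta-1)$.

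The hypothesis $\beta<1/2$ enters precisely here, guaranteeing $2\beta-1<0$, so every contribution is bounded by $N^{(k+1)(1-\beta)+j\beta}$ times a power of $\ln(N)$. Setting
$$\bar{a}^{k+1}_{j}:=C_{k,j,K,c,d}\Bigl(\bar{a}^{k}_{j+1}+\sum_{m=0}^{j}\bar{a}^{k}_{j-m}\Bigr),\qquad \bar{b}^{k+1}_{j}:=C_{k,j,K,c,d}+\max_{m\leq j+1}\bar{b}^{k}_{m}$$
closes the induction. The main (mild) obstacle is simply the bookkeeping of the two sequences $(\bar{a}^{k}_{j}),(\bar{b}^{k}_{j})$; no new analytic idea is needed beyond the support-localized bound $|\nabla g_{N}|\leq C\ln(N)^{C'}N^{-\beta}$, which converts the cost $N$ of differentiating $\sin(Ng_{N})$ into the sought factor $N^{1-\beta}$.
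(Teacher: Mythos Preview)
Your proof is correct and follows essentially the same approach as the paper: induction on $k$ with the recursive formulas $g^{1}_{k+1}=\partial g^{1}_{k}-N(\partial g_{N})g^{2}_{k}$, $g^{2}_{k+1}=\partial g^{2}_{k}+N(\partial g_{N})g^{1}_{k}$, the key input being the support-localized bound $|\nabla g_{N}|\lesssim N^{-\beta}\ln(N)^{C}$ coming from the absence of linear terms in $g_{N}$. Your bookkeeping of the $N$-exponents in the Leibniz sum, including the explicit identification of where $\beta<\tfrac12$ is used (so that $2\beta-1<0$), is in fact slightly more detailed than the paper's version.
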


\begin{proof}
	We will show the result by induction. Note that the result holds trivially true for $k=0$ with
	$$g^{1}_{0,0}=f_{N},\ \  g^{2}_{0,0}=0.$$
	 We will prove the result, for simplicity, in the case $i=0$ , but note that the proof is exactly the same (but with a more cumbersome notation) for generic $i$. If the result holds true for $k=k_{0}$, we have that
	\begin{align*}
		&\frac{\partial^{k_{0}+1}}{\partial x_{1}^{k_{0}+1}}f_{N}(x)\sin(Ng_{N}(x)+\theta_{0})=\frac{\partial}{\partial x_{1}}[g^{1}_{k_{0},0}(x)\sin(Ng_{N}(x)+\theta_{0})+g^{2}_{k_{0},0}(x)\cos(Ng_{N}(x)+\theta_{0})]\\
		&=(\frac{\partial}{\partial x_{1}}g^{1}_{k_{0},0}(x))\sin(Ng_{N}(x)+\theta_{0})-N(\frac{\partial}{\partial x_{1}}g_{N}(x))g^{2}_{k_{0},0}(x)\sin(Ng_{N}(x)+\theta_{0})\\
		&+N(\frac{\partial}{\partial x_{1}}g_{N}(x))g^{1}_{k_{0},0}(x)\cos(Ng_{N}(x)+\theta_{0})+(\frac{\partial}{\partial x_{1}}g^{2}_{k_{0},0}(x))\cos(Ng_{N}(x)+\theta_{0})
	\end{align*}
and thus we can define
$$g^{1}_{k_{0}+1,0}(x)=\frac{\partial}{\partial x_{1}}g^{1}_{k_{0},0}(x)-(N\frac{\partial }{\partial x_{1}}g_{N}(x))g^{2}_{k,0}(x),$$
$$g^{2}_{k_{0}+1,0}(x)=\frac{\partial}{\partial x_{1}}g^{2}_{k_{0},0}(x)+(N\frac{\partial }{\partial x_{1}}g_{N}(x))g^{1}_{k,0}(x),$$
which immediately fulfill the bounds for the support, and thus we only need to show the bounds for the norms of $g^{1}_{k+1,0}(x),g^{2}_{k+1,0}(x)$. Let us focus on $g^{1}_{k+1,0}(x)$, the other case being analogous. First, we note that, by hypothesis
$$\|\frac{\partial}{\partial x_{1}}g^{1}_{k_{0},0}(x)\|_{C^{j}}\leq \bar{a}^{k_{0}}_{j}N^{k_{0}(1-\beta)+(j+1)\beta}\ln(N)^{\bar{b}^{k}_{j}}\leq\bar{a}^{k_{0}}_{j}N^{(k_{0}+1)(1-\beta)+j\beta}\ln(N)^{\bar{b}^{k}_{j}}$$
since $\beta\leq 1-\beta$, $N>1$, so we only need to worry about the norms of $(N\frac{\partial }{\partial x_{1}}g_{N}(x))g^{2}_{k,0}(x)$.
But we have that, since $\frac{\partial }{\partial x_{1}}g_{N}(x=0)=0$, using the bounds for the support of $f_{N}$ and the bounds for $g_{N}$
$$\|\frac{\partial}{\partial x_{1}}g_{N}\|_{L^{\infty}(\text{supp}(f_{N}))}\leq C\ln(N)^4N^{-\beta}\ln(N)^{d}$$
$$\|\frac{\partial}{\partial x_{1}}g_{N}\|_{C^{j}(\text{supp}(f_{N}))}\leq C_{j}c\ln(N)^{d}$$
so
\begin{align*}
	&\|(N\frac{\partial }{\partial x_{1}}g_{N}(x))g^{2}_{k_{0},0}(x)\|_{C^{j}}=\|(N\frac{\partial }{\partial x_{1}}g_{N}(x))g^{2}_{k_{0},0}(x)\|_{C^{j}(\text{supp}(f_{N}))}\\
	&\leq N\sum_{i=0}^{j}\|g^{2}_{k_{0},0}\|_{C^{j-i}}\| \p_{x_{1}}g_{N}\|_{C^{i}(\text{supp}(f_{N}))}\leq N^{1-\beta}\sum_{i=0}^{j}C_{i}\|g^{2}_{k_{0},0}\|_{C^{j-i}}\ln(N)^{d}N^{i \beta}\\
	&\leq C_{j}N^{1-\beta}\sum_{i=0}^{j}\bar{a}^{k_{0}}_{j}N^{k_{0}(1-\beta)+(j-i)\beta}\ln(N)^{\bar{b}^{k_{0}}_{j-i}}\ln(N)^{d}N^{i \beta}\leq C_{j}N^{(k_{0}+1)(1-\beta)+j\beta}\ln^{d+c_{k_{0}}}
\end{align*}
as we wanted to prove.

\end{proof}

\begin{lemma}\label{boundsukgen}
	Given a $p$-layered solution and $k,L\in\mathds{N}$,
	  $X(x,t)$ as in Definition \ref{soloperator}, $\frac14>\beta_{p+1}>0$, $\lambda>0$ with $N_{p+1}$ given by Remark \ref{remarknp1}, $(a_{i})_{i\in\mathds{N}}, (b_{i})_{i\in\mathds{N}}$  and $f_{N}(x)\in C^{\infty}$ with $\text{supp}(f_{N_{p+1}}(x))\subset B_{N_{p+1}^{-2\beta_{p+1}}[\ln(N_{p+1})]^4}$ and fulfilling
	  	$$\|f_{N_{p+1}}\|_{C^{j}}\leq a_{j}N_{p+1}^{2j\beta_{p+1}}\ln(N_{p+1})^{b_{j}}$$
	  if we define, for $L\geq l\geq 1$
	\begin{align*}
		&u^{k}(\lambda f_{N_{p+1}}(x)\sin(N_{p+1}lX(x,t))+\theta_{0}):=\\
		&u^{k}(\lambda f_{N_{p+1}}(x)\sin(N_{p+1}l\P_{1}(x,t)+\theta_{0})\cos(N_{p+1}lX(x,t)-N_{p+1}l\P_{1}X(x,t)))\\
		&+u^{k}(\lambda f_{N_{p+1}}(x)\cos(N_{p+1}l\P_{1}(x,t)+\theta_{0})\sin(N_{p+1}lX(x,t)-N_{p+1}l\P_{1}X(x,t)))
	\end{align*}
then we have that, for $N_{p}$ big enough (depending only on $\beta_{p}$ and $\beta_{p+1}$), there exists $\bar{g}_{1,i,j} (x),\bar{g}_{2,i,j}(x)$ (which depend on $N_{p},f_{N_{p}}$ and $l$, but not on $k$)for $i=0,1,...,k$, $j=1,2$ such that
\begin{align}\label{decompuk}
	&u_{j}^{k}(\lambda f_{N_{p+1}}(x)\sin(N_{p+1}lX(x,t)+\theta_{0})\\
	&=\sum_{i=0}^{k}\lambda \bar{g}_{1,i,j}(x)\sin(N_{p+1}lX(x,t)+\theta_{0})+\sum_{i=1}^{k}\lambda\nonumber \bar{g}_{2,i,j}(x)\cos(N_{p+1}lX(x,t)+\theta_{0})
\end{align}
 with $\text{supp }\bar{g}_{1,i,j},\bar{g}_{2,i,j}\subset B_{N_{p+1}^{-2\beta_{p+1}}[\ln(N_{p+1})]^4}$ and there exists $(\bar{a}^{i}_{m})_{m\in\mathds{N}}$, $(\bar{b}^{i}_{m})_{m\in\mathds{N}}$ (depending on  $\beta_{p},\beta_{p+1},(a_{j})_{j\in\mathds{N}},$ $(b_{j})_{j\in\mathds{N}}$, $k,i$ and $L$) such that, for $i=0,1,...,k$, $j=1,2$, 
 \begin{align}\label{decompukbd}
 	\|\bar{g}_{1,i,j}\|_{C^{m}},\|\bar{g}_{2,i,j}\|_{C^{m}}\leq \bar{a}^{i}_{m}N_{p+1}^{2(m-i)\beta_{p+1}}\ln(N_{p+1})^{\bar{b}^{i}_{m}}.
 \end{align}
Furthermore, there exists $(\tilde{a}^{k}_{m})_{j\in\mathds{N}}$, $(\tilde{b}^{k}_{m})_{j\in\mathds{N}}$ (depending on  $\beta_{p},\beta_{p+1},$ $(a_{i})_{i\in\mathds{N}},(b_{i})_{i\in\mathds{N}}$, $k$ and $L$) such that
\begin{align}\label{cotaukx1}
	&\|u(\lambda f_{N_{p+1}}(x)\sin(N_{p+1}lX(x,t))+\theta_{0})-u^{k}(\lambda f_{N_{p+1}}(x)\sin(N_{p+1}lX(x,t))+\theta_{0})\|_{C^{m}}\\
	&\leq \lambda \tilde{a}^{k}_{m}N_{p+1}^{2(m+2)-k\beta_{p+1}}\ln(N)^{\tilde{b}^{k}_{m}}.\nonumber
\end{align}

\end{lemma}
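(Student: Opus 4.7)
The strategy is to reduce everything to the setting of Corollary \ref{defuk} by exploiting the product-to-sum decomposition built into the definition of $u^{k}$ on this extended class, and then to carefully track the $C^{m}$ bounds using Lemma \ref{auxuk} together with the estimates on the coefficients of $X$ from Corollary \ref{cotasx1np1}.

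First I would write $\P_{1}X(x,t) = r(t)(\cos\tilde{\alpha}(t)\, x_{1} + \sin\tilde{\alpha}(t)\, x_{2})$ where $\tilde\alpha$ is the angle of $\P_{1}X$ and $r(t)$ is its modulus. By Corollary \ref{cotasx1np1} the coefficients of $X$ are controlled by $\ln(N_{p+1})^{C}$, so $r(t) \in [c_{1}, c_{2}]$ for constants depending only on $\beta_{p}, \beta_{p+1}$; in particular, setting $\tilde N := N_{p+1}l r(t)$ and $(b,a) := (\cos\tilde\alpha, \sin\tilde\alpha)$, we have $\tilde N \sim N_{p+1}$ up to logarithmic factors, and $a^{2}+b^{2}=1$. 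Define $\eta(x,t) := N_{p+1}l(X - \P_{1}X)(x,t)$, which is a polynomial in $x$ starting at order $2$ whose coefficients are bounded by $N_{p+1}\ln(N_{p+1})^{C}$. The product-to-sum formula gives
\begin{align*}
\lambda f_{N_{p+1}}(x)\sin(N_{p+1}lX + \theta_{0}) = \lambda\, \tilde f^{c}(x)\sin(\tilde N(bx_{1}+ax_{2}) + \theta_{0}) + \lambda\, \tilde f^{s}(x)\cos(\tilde N(bx_{1}+ax_{2}) + \theta_{0}),
\end{align*}
where $\tilde f^{c} := f_{N_{p+1}}\cos(\eta)$ and $\tilde f^{s} := f_{N_{p+1}}\sin(\eta)$. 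This is exactly the decomposition prescribed in the definition of $u^{k}$ on the extended class.

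Next I would apply Corollary \ref{defuk} to each of the two pieces separately, with $N = \tilde N$ and cut-offs $\tilde f^{c}, \tilde f^{s}$. Corollary \ref{defuk} produces an expression of the form
\begin{align*}
\sum_{i=0}^{k}\tilde N^{-i}\, [\text{const}_{i}]\, \partial^{i}\tilde f^{c/s}\; \bigl(c^{s/c}_{i,\cdot}\sin(\tilde N(bx_{1}+ax_{2})+\theta_{0}) + c^{c/s}_{i,\cdot}\cos(\tilde N(bx_{1}+ax_{2})+\theta_{0})\bigr),
\end{align*}
and using product-to-sum in reverse collapses each sin/cos of $\tilde N(bx_{1}+ax_{2})+\theta_{0}$ multiplied by a $\cos(\eta)$ or $\sin(\eta)$ factor coming from $\tilde f^{c/s}$ into a sin/cos of $N_{p+1}lX + \theta_{0}$. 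This yields the structural decomposition \eqref{decompuk}; the support of each $\bar g_{1,i,j},\bar g_{2,i,j}$ is inherited from that of $f_{N_{p+1}}$.

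The bulk of the work is the quantitative bound \eqref{decompukbd}. For this I would apply Lemma \ref{auxuk} with $\beta = 2\beta_{p+1}$, $N = N_{p+1}$, and $g_{N}(x) = l(X-\P_{1}X)(x,t)$ (whose coefficients are $O(\ln(N_{p+1})^{C})$ by Corollary \ref{cotasx1np1}), which gives an explicit decomposition
\begin{align*}
\partial_{x_{1}}^{i-j}\partial_{x_{2}}^{j}(f_{N_{p+1}}\cos(\eta)) = h^{1}_{i,j}(x)\cos(\eta) + h^{2}_{i,j}(x)\sin(\eta), \qquad \|h^{r}_{i,j}\|_{C^{m}} \leq \bar a^{i}_{m} N_{p+1}^{i(1-2\beta_{p+1}) + m\cdot 2\beta_{p+1}}\ln^{C}
\end{align*}
(and analogously with $\cos(\eta)$ replaced by $\sin(\eta)$). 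Combining this with the factor $\tilde N^{-i}$ (for the $V^{K-1}$ pieces) or $\tilde N^{1-i}$ (for the $V^{K}$ piece against $\partial_{x_{j}}\sin$, which produces a compensating $\tilde N$ factor), both of which behave like $N_{p+1}^{-i}$ or $N_{p+1}^{1-i}$ times $\ln^{C}$, gives, for the $L^{\infty}$ bound of the $i$-th coefficient, a gain of $N_{p+1}^{-i + i(1-2\beta_{p+1})} = N_{p+1}^{-2i\beta_{p+1}}$ times logarithms; each additional derivative in $C^{m}$ costs precisely $N_{p+1}^{2\beta_{p+1}}$, which produces the stated exponent $2(m-i)\beta_{p+1}$ in \eqref{decompukbd}.

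For the error bound \eqref{cotaukx1} I would substitute $f = \tilde f^{c}$, $N = \tilde N$, $J = m$ into \eqref{u1K} from Corollary \ref{defuk}, and use the bounds $\|\tilde f^{c}\|_{C^{j}} \leq C_{j} N_{p+1}^{j(1-2\beta_{p+1})}\ln^{C}$ (which follow from Lemma \ref{auxuk} by taking the $L^{\infty}$ norm of each $h^{r}$). Choosing the parameter $\eps$ in \eqref{u1K} to balance the two terms (e.g. $\eps \approx 1/2$) and using $\tilde N \leq C N_{p+1}\ln^{C}$, every summand is dominated by $\lambda N_{p+1}^{2(m+2) - k\beta_{p+1}}\ln(N_{p+1})^{C}$, which is the claimed bound. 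The same argument applies verbatim to the $\tilde f^{s}\cos$ piece.

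The main obstacle is the bookkeeping in step 3: one must verify that every logarithmic and polynomial factor fits into the $\bar a^{i}_{m}\ln^{\bar b^{i}_{m}}$ template, in particular that differentiation through both the outer $\sin(\tilde N(bx_{1}+ax_{2})+\theta_{0})$ and the modified cutoff $\tilde f^{c/s}$ never introduces a power of $N_{p+1}$ beyond $N_{p+1}^{2\beta_{p+1}}$ per derivative — which is exactly the content of Lemma \ref{auxuk} applied on the support $B_{N_{p+1}^{-2\beta_{p+1}}\ln^{4}}(0)$, where the balance between the small support and the coefficient size of $\eta$ is sharp.
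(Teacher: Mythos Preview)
Your approach is essentially the one the paper takes: split via product-to-sum into the two pieces with linear phase $\tilde N(bx_1+ax_2)$, apply Corollary \ref{defuk} termwise, invoke Lemma \ref{auxuk} with $\beta=2\beta_{p+1}$ to control the derivatives of $f_{N_{p+1}}\cos(\eta)$ and $f_{N_{p+1}}\sin(\eta)$, and recombine trigonometrically into $\sin/\cos(N_{p+1}lX+\theta_0)$; Corollary \ref{cotasx1np1} supplies the logarithmic bounds on the coefficients of $X$. The structural decomposition \eqref{decompuk} and the bound \eqref{decompukbd} fall out exactly as you describe.

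There is one genuine slip in your treatment of \eqref{cotaukx1}. The suggestion ``e.g.\ $\eps\approx 1/2$'' does \emph{not} work: with $\eps=1/2$ the second term in \eqref{u1K}, namely $\tilde N^{\,m+1-(k+2)\eps}\|f\|_{C^{k+m+2}}$, has exponent of order $2m+2+\tfrac{k}{2}-2\beta_{p+1}(k+m+2)$, and since $\beta_{p+1}<\tfrac14$ this grows like $k/4$ for large $k$, overwhelming the target exponent $2(m+2)-k\beta_{p+1}$. The paper takes $\eps=1-\beta_{p+1}$, which is precisely what kills the $k$-dependence in that term (the coefficient of $k$ in the exponent becomes $1-\eps-\beta_{p+1}=0$). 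If you actually carry out the ``balance the two terms'' optimization you will be led to $\eps$ close to $1-\beta_{p+1}$, so your principle is right but the offhand example is misleading. A smaller point: $r(t)$ is not trapped between absolute constants but between powers of $\ln(N_{p+1})$ (via Lemma \ref{p1phi} and $N_p^{\beta_p/4}\le \ln(N_{p+1})^C$); this is harmless since it is absorbed into the logarithmic factors, but it is worth stating correctly.
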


\begin{proof}
	We start by noting that $N_{p+1}$ is well defined using Remark \ref{remarknp1} as long as $N_{p}$ is big enough. We also take $N_{p}$ big enough to apply the lemmas proved previously. We will prove all the properties for $\lambda=1$, noting that the proof is the same for generic $\lambda$ since everything is linear in $\lambda$.
	We now note that, by the definition of $u^{k}$ in Corollary \ref{defuk} we have that, considering for now the second component of the velocity $u^{k}_{2}$ (the other one being analogous)
	\begin{align}\label{ukx1}
		&u_{2}^{k}(f_{N_{p+1}}(x)\sin(N_{p+1}lX(x,t)+\theta_{0})):=\\
		&u_{2}^{k}(f_{N_{p+1}}(x)\sin(N_{p+1}l\P_{1}(x,t)+\theta_{0})\cos(N_{p+1}lX(x,t)-N_{p+1}l\P_{1}X(x,t)))\nonumber\\
		&+u_{2}^{k}(f_{N_{p+1}}(x)\cos(N_{p+1}l\P_{1}(x,t)+\theta_{0})\sin(N_{p+1}lX(x,t)-N_{p+1}l\P_{1}X(x,t))).\nonumber
	\end{align}
	We will from now on take $\theta_{0}=0$ for simplicity, but the proof is exactly the same for generic $\theta_{0}$.
	Now, using the expression for $V^{k},V^{k-1}$ and the definition for $u^{k}$ in Corollary \ref{defuk} we can decompose 
	$$u_{2}^{k}(f_{N}(x)\sin(N_{p+1}lX(x,t)))=\sum_{i=0}^{K+1}\tilde{u}_{2}^{i}$$
	with
	\begin{align*}
		\tilde{u}_{2}^{i}:=
		&\sum_{j=0}^{i}\frac{\p_{x_1}^{i-j}\p_{x_2}^{j}[f_{N_{p+1}}(x)\sin(N_{p+1}lX(x,t)-N_{p+1}l\P_{1}X(x,t)+\frac{\pi}{2})]}{N_{p+1}^{i+1}}\\
		&\times\p_{x_{1}}(c^{s}_{i,j}\sin(N_{p+1}l\P_{1}X(x,t))+c^{c}_{i,j}\cos(N_{p+1}l\P_{1}X(x,t)))\\
		+&\sum_{j=0}^{i}\frac{\p_{x_1}^{i-j}\p_{x_2}^{j}[f_{N_{p+1}}(x)\sin(N_{p+1}lX(x,t)-N_{p+1}l\P_{1}X(x,t))]}{N_{p+1}^{i+1}}\\
		&\times\p_{x_{1}}(c^{s}_{i,j}\sin(N_{p+1}l\P_{1}X(x,t)+\frac{\pi}{2})+c^{c}_{i,j}\cos(N_{p+1}l\P_{1}X(x,t)+\frac{\pi}{2}))\\
		+&\sum_{j=0}^{i-1}\frac{\p_{x_1}^{i-j+1}\p_{x_2}^{j}[f_{N_{p+1}}(x)\sin(N_{p+1}lX(x,t)-N_{p+1}l\P_{1}X(x,t)+\frac{\pi}{2})]}{N_{p+1}^{i+1}}\\
		&\times(c^{s}_{i,j}\sin(N_{p+1}l\P_{1}X(x,t))+c^{c}_{i,j}\cos(N_{p+1}l\P_{1}X(x,t)))\\
		+&\sum_{j=0}^{i-1}\frac{\p_{x_1}^{i-j+1}\p_{x_2}^{j}[f_{N_{p+1}}(x)\sin(N_{p+1}lX(x,t)-N_{p+1}l\P_{1}X(x,t))]}{N_{p+1}^{i+1}}\\
		&\times(c^{s}_{i,j}\sin(N_{p+1}l\P_{1}X(x,t)+\frac{\pi}{2})+c^{c}_{i,j}\cos(N_{p+1}l\P_{1}X(x,t)+\frac{\pi}{2})).
	\end{align*}
	We will show that
	$$\tilde{u}^{i}_{2}=\bar{g}_{1,i,2}(x)\sin(N_{p+1}lX(x,t))+\bar{g}_{2,i,2}(x)\cos(N_{p+1}lX(x,t))$$
	with the desired bounds, which is equivalent to our lemma.
	Focusing first on the addends in the first two lines, we now note that, using lemma \ref{auxuk}  with $\beta=2\beta_{p+1}$, the bounds for $\|f_{N_{p+1}}\|_{C^{m}}$ and the bounds for $X(x,t)$ given by \ref{cotasx1np1}, we have
	\begin{align*}
		&\sum_{j=0}^{i}\p_{x_1}^{i-j}\p_{x_2}^{j}[f_{N_{p+1}}(x)\sin(N_{p+1}lX(x,t)-N_{p+1}l\P_{1}X(x,t)+C)]\\
	&=g_{1,i}(x)\sin(N_{p+1}lX(x,t)-N_{p+1}l\P_{1}X(x,t)+C)+g_{2,i}(x)\sin(N_{p+1}lX(x,t)-N_{p+1}l\P_{1}X(x,t)+C)
	\end{align*}
	with $g_{1,i},g_{2,i}$ having the desired bounds for the support and there are sequences $(\bar{a}_{j})_{j\in\mathds{N}},(\bar{b}_{j})_{j\in\mathds{N}}$ (depending on $\beta_{p},\beta_{p+1},i,L$, $(a_{j})_{j\in\mathds{N}},$ and $(b_{j})_{j\in\mathds{N}}$) such that
		$$\|g_{1,i}\|_{C^{j}},\|g_{2,i}\|_{C^{j}}\leq \bar{a}^{i}_{j}N_{p+1}^{i(1-2\beta_{p+1})+j2\beta_{p+1}}\ln(N)^{\bar{b}^{i}_{j}}.$$
			Using this, we can then write
			\begin{align*}
				&\sum_{j=0}^{i}\p_{x_1}^{i-j}\p_{x_2}^{j}[f_{N_{p+1}}(x)\sin(N_{p+1}lX(x,t)-N_{p+1}l\P_{1}X(x,t)+\frac{\pi}{2})]\\
				&\times\p_{x_{1}}(c^{s}_{i,j}\sin(N_{p+1}l\P_{1}X(x,t))+c^{c}_{i,j}\cos(N_{p+1}l\P_{1}X(x,t)))\\
				+&\sum_{j=0}^{i}\p_{x_1}^{i-j}\p_{x_2}^{j}[f_{N_{p+1}}(x)\sin(N_{p+1}lX(x,t)-N_{p+1}l\P_{1}X(x,t))]\\
				&\times\p_{x_{1}}(c^{s}_{i,j}\sin(N_{p+1}l\P_{1}X(x,t)+\frac{\pi}{2})+c^{c}_{i,j}\cos(N_{p+1}l\P_{1}X(x,t)+\frac{\pi}{2}))\\
				&=[g_{1,i}(x)\sin(N_{p+1}lX(x,t)-N_{p+1}l\P_{1}X(x,t)+\frac{\pi}{2})+g_{2,i}(x)\cos(N_{p+1}lX(x,t)-N_{p+1}l\P_{1}X(x,t)+\frac{\pi}{2})]\\
				&\times \p_{x_{1}}(c^{s}_{i,j}\sin(N_{p+1}l\P_{1}X(x,t))+c^{c}_{i,j}\cos(N_{p+1}l\P_{1}X(x,t)))\\
				&+[g_{1,i}(x)\sin(N_{p+1}lX(x,t)-N_{p+1}l\P_{1}X(x,t))+g_{2,i}(x)\cos(N_{p+1}lX(x,t)-N_{p+1}l\P_{1}X(x,t))]\\
				&\times \p_{x_{1}}(c^{s}_{i,j}\sin(N_{p+1}l\P_{1}X(x,t)+\frac{\pi}{2})+c^{c}_{i,j}\cos(N_{p+1}l\P_{1}X(x,t)+\frac{\pi}{2}))
				\end{align*}
				\begin{align*}
				&=g_{1,i}(x)c^{s}_{i,j}Nl[(\p_{x_{1}}X)(0,t)]\sin(N_{p+1}lX(x,t)-N_{p+1}l\P_{1}X(x,t)+\frac{\pi}{2})\cos(N_{p+1}l\P_{1}X(x,t))\\
				&+g_{1,i}(x)c^{s}_{i,j}Nl[(\p_{x_{1}}X)(0,t)]\sin(N_{p+1}lX(x,t)-N_{p+1}l\P_{1}X(x,t))\cos(N_{p+1}l\P_{1}X(x,t)+\frac{\pi}{2})\\
				&+g_{2,i}(x)c^{s}_{i,j}Nl[(\p_{x_{1}}X)(0,t)]\cos(N_{p+1}lX(x,t)-N_{p+1}l\P_{1}X(x,t)+\frac{\pi}{2})\cos(N_{p+1}l\P_{1}X(x,t))\\
				&+g_{2,i}(x)c^{s}_{i,j}Nl[(\p_{x_{1}}X)(0,t)]\cos(N_{p+1}lX(x,t)-N_{p+1}l\P_{1}X(x,t))\cos(N_{p+1}l\P_{1}X(x,t)+\frac{\pi}{2})\\
				&-g_{1,i}(x)c^{c}_{i,j}Nl[(\p_{x_{1}}X)(0,t)]\sin(N_{p+1}lX(x,t)-N_{p+1}l\P_{1}X(x,t)+\frac{\pi}{2})\sin(N_{p+1}l\P_{1}X(x,t))\\
				&-g_{1,i}(x)c^{c}_{i,j}Nl[(\p_{x_{1}}X)(0,t)]\sin(N_{p+1}lX(x,t)-N_{p+1}l\P_{1}X(x,t))\sin(N_{p+1}l\P_{1}X(x,t)+\frac{\pi}{2})\\
				&-g_{2,i}(x)c^{c}_{i,j}Nl[(\p_{x_{1}}X)(0,t)]\cos(N_{p+1}lX(x,t)-N_{p+1}l\P_{1}X(x,t)+\frac{\pi}{2})\sin(N_{p+1}l\P_{1}X(x,t))\\
				&-g_{2,i}(x)c^{c}_{i,j}Nl[(\p_{x_{1}}X)(0,t)]\cos(N_{p+1}lX(x,t)-N_{p+1}l\P_{1}X(x,t))\sin(N_{p+1}l\P_{1}X(x,t)+\frac{\pi}{2})
			\end{align*}
			\begin{align}\label{g1ig2i}
				&=g_{1,i}(x)c^{s}_{i,j}Nl[(\p_{x_{1}}X)(0,t)]\cos(NlX(x,t))-g_{2,i}(x)c^{s}_{i,j}Nl[(\p_{x_{1}}X)(0,t)]\sin(NlX(x,t))\\
				&-g_{1,i}(x)c^{c}_{i,j}Nl[(\p_{x_{1}}X)(0,t)]\sin(NlX(x,t))-g_{2,i}(x)c^{c}_{i,j}Nl[(\p_{x_{1}}X)(0,t)]\cos(NlX(x,t))\nonumber
			\end{align}
			but now, using the properties of $g_{1,i}$ and of $X(x,t)$ we get
			\begin{align*}
				&\|\frac{g_{1,i}(x)c^{s}_{i,j}N_{p+1}l[(\p_{x_{1}}X)(0,t)]}{N_{p+1}^{i+1}}\|_{C^{m}}\leq C_{m} \sum_{l=0}^{m}\|g_{1,i}(x)\|_{C^{l}}\|\frac{c^{s}_{i,j}N_{p+1}lX(x,t)}{N_{p+1}^{i+1}}\|_{C^{m-l+1}(\text{supp}g_{1,i})}\\
				&\leq\frac{C_{L,m,i,\beta_{p},\beta_{p+1}}}{N_{p+1}^{i}} \sum_{l=0}^{m}\bar{a}^{i}_{l}N_{p+1}^{i(1-2\beta_{p+1})+l2\beta_{p+`}}\ln(N_{p+1})^{\bar{b}^{i}_{l}}\ln(N_{p+1})^{C_{\beta_{p},\beta_{p+1}}}\\
				&\leq  C N_{p+1}^{(m-i)2\beta_{p+1}}\ln(N_{p+1})^{C}
			\end{align*}
			where in the last line we omit all the sub-indexes, but the constants depend on  $\beta_{p},\beta_{p+1},(a_{i})_{i\in\mathds{N}},(b_{i})_{i\in\mathds{N}}$, $i,m,L$ and $k$.  The exact same bounds can be obtained for the rest of the terms in \eqref{g1ig2i}, giving the desired bounds for $\tilde{u}^{i}$.
			
			To obtain \eqref{cotaukx1}, using \eqref{ukx1} we can apply Corollary \ref{defuk} with 
			$$\rho^{1}(x)=f^{1}(x)\cos(N_{p+1}l\P_{1}X(x,t));f^{1}(x)=f_{N_{p+1}}(x)\sin(N_{p+1}lX(x,t)-N_{p+1}l\P_{1}X(x,t))$$
			and with 
			$$\rho^{2}(x)=f^{2}(x)\sin(N_{p+1}l\P_{1}X(x,t));f^{2}(x)=f_{N_{p+1}}(x)\cos(N_{p+1}lX(x,t)-N_{p+1}l\P_{1}X(x,t))$$
			to obtain
			\begin{align}\label{equ2x1}
				&\|u_{2}(f_{N_{p+1}}(x)\sin(N_{p+1}lX(x,t)+\theta_{0}))-u^{k}_{2}(f_{N_{p+1}}(x)\sin(N_{p+1}lX(x,t)+\theta_{0}))\|_{C^{J}}\nonumber\\
				&\leq C_{k,\eps,J}(\sum_{i=0}^{k} \tilde{N}^{i\eps+J+1-k}(\|f^{1}\|_{C^{k-i+J+1}}+\|f^{2}\|_{C^{k-i+J+1}}),\\
                &+\tilde{N}^{J+1-(k+2)\eps} \|f_{1}\|_{C^{k+J+2}}+\|f_{2}\|_{C^{k+J+2}})\nonumber
			\end{align}
			with 
			$$\tilde{N}=N_{p+1}l[(\p_{x_{1}}X(x=0,t))^2+(\p_{x_{2}}X(x=0,t))^2]^{\frac{1}{2}}$$
			which fulfils
			\begin{equation}\label{ntilde}
				N_{p+1}\ln(N_{p+1})^{-C_{\beta_{p},\beta_{p+1}}}\leq \tilde{N}\leq N_{p+1}L\ln(N_{p+1})^{C_{\beta_{p},\beta_{p+1}}}
			\end{equation}
                by Lemma \ref{p1phi} combined with
                $$[(\p_{x_{1}}X(x=0,t))^2+(\p_{x_{2}}X(x=0,t))^2]^{\frac{1}{2}}\leq [(\p_{x_{1}}X(x=0,1))^2+(\p_{x_{2}}X(x=0,1))^2]^{\frac{1}{2}}\sum_{i,j=1,2}|\p_{x_{j}}\phi_{i}(x=0,t,1)|,$$
                $$[(\p_{x_{1}}X(x=0,1))^2+(\p_{x_{2}}X(x=0,1))^2]^{\frac{1}{2}}\leq [(\p_{x_{1}}X(x=0,t))^2+(\p_{x_{2}}X(x=0,t))^2]^{\frac{1}{2}}\sum_{i,j=1,2}|\p_{x_{j}}\phi_{i}(x=0,1,t)|.$$

			But then, we can apply Lemma \ref{auxuk} with $j=0$, $\rho=f^{1},f^{2}$, $f_{N}=f_{N_{p+1}}(x)$, $\beta=2\beta_{p+1}$ to get
			$$\|f^{1}\|_{C^{m}},\|f^{2}\|_{C^{m}}\leq C_{m,\beta_{p},\beta_{p+1},L}N_{p+1}^{m(1-2\beta_{p+1})}\ln(N)^{C_{m,\beta_{p},\beta_{p+1},L}},$$
			and thus, combining this with \eqref{equ2x1} and \eqref{ntilde} we get, by taking $\epsilon=1-\beta_{p+1}$ (omitting the sub-indexes of the constants, that depend on $m,\beta_{p},\beta_{p+1},L$ and $k$)
\begin{align*}
	&\|u_{2}(f_{N_{p+1}}(x)\sin(N_{p+1}lX(x,t)+\theta_{0}))-u^{K}_{2}(f_{N_{p+1}}(x)\sin(N_{p+1}lX(x,t)+\theta_{0}))\|_{C^{J}}\nonumber\\
	&\leq C \ln(N)^{C}(\sum_{i=0}^{k}N_{p+1}^{i(1-\beta_{p+1})+J+1-k}N_{p+1}^{(1-2\beta_{p+1})(k-i+J+1)}+N^{J+1-(k+2)(1-\beta_{p+1})+(1-2\beta_{p+1})k+J+2})\\
 &\leq C \ln(N)^{C}N_{p+1}^{2(J+2)-k\beta_{p+1}}.
\end{align*}

\end{proof}

\begin{lemma}\label{cuadpeque}
	Given a $p$-layered solution and $k\in\mathds{N}\cup 0$,
	$X(x,t)$ as in Definition \ref{soloperator}, $\frac14>\beta_{p+1}>0$ and $N_{p+1}$ given by Remark \ref{remarknp1}, $(a_{i})_{i\in\mathds{N}}, (b_{i})_{i\in\mathds{N}}$  and $f^{1}_{N}(x),f^{2}_{N}(x)\in C^{\infty}$ fulfilling, for $i=1,2$ $\text{supp}(f^{i}_{N_{p+1}}(x))\subset B_{N_{p+1}^{-2\beta_{p+1}}[\ln(N_{p+1})]^4}$ and 
	$$\|f^{i}_{N_{p+1}}\|_{C^{m}}\leq a_{m}N_{p+1}^{2m\beta_{p+1}}\ln(N_{p+1})^{b_{m}}$$
	then for $l_{2},l_{1}\geq 1$, $\lambda_{1},\lambda_{2}>0$, $\theta_{0}^{1},\theta_{0}^{2}\in\mathds{R}$ we have that
	\begin{align*}
		&u^{k}(\lambda_{1}f^{1}_{N_{p+1}}(x)\sin(N_{p+1}l_{1}X(x,t)+\theta^{1}_{0}))\cdot\nabla (\lambda_{2}f^{2}_{N_{p+1}}(x)\sin(N_{p+1}l_{2}X(x,t)+\theta^{2}_{0}))\\
		&=\lambda_{1}\lambda_{2}\big(\sin(N_{p+1}l_{1}X(x,t)+\theta^{1}_{0})[a_{1,1}(x)\sin(N_{p+1}l_{2}X(x,t))+a_{1,2}(x)\cos(N_{p+1}l_{2}X(x,t)+\theta^{2}_{0})]\\
		&+\cos(N_{p+1}l_{1}X(x,t)+\theta^{1}_{0})[a_{2,1}(x)\sin(N_{p+1}l_{2}X(x,t)+\theta^{2}_{0})+a_{2,2}(x)\cos(N_{p+1}l_{2}X(x,t)+\theta^{2}_{0})]\big)
	\end{align*}
	 and there exists $(\bar{a}_{m})_{m\in\mathds{N}}$, $(\bar{b}_{m})_{m\in\mathds{N}}$ (depending on  $\beta_{p},\beta_{p+1},(a_{j})_{j\in\mathds{N}},(b_{j})_{j\in\mathds{N}}$ and $k$)
	with, for $i,j=1,2$
	$$\|a_{i,j}\|_{C^{m}}\leq \bar{a}_{m}N_{p+1}^{2(m-1)\beta_{p+1}+1}\ln(N_{p+1})^{\bar{b}_{m}}.$$

\end{lemma}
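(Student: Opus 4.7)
The plan is to decompose $u^k(\lambda_1 f^1_{N_{p+1}} \sin(\theta^1))$ via Lemma \ref{boundsukgen} and expand $\nabla(\lambda_2 f^2_{N_{p+1}} \sin(\theta^2))$ by the product rule, then multiply and collect terms into the four claimed trigonometric products. With $\theta^i = N_{p+1} l_i X(x,t) + \theta_0^i$, one has
$$u^k_j(\lambda_1 f^1 \sin\theta^1) = \lambda_1 \sum_{i=0}^k \bar g_{1,i,j} \sin\theta^1 + \lambda_1 \sum_{i=1}^k \bar g_{2,i,j} \cos\theta^1,$$
$$\partial_{x_j}(\lambda_2 f^2 \sin\theta^2) = \lambda_2 (\partial_{x_j} f^2) \sin\theta^2 + \lambda_2 f^2 N_{p+1} l_2 (\partial_{x_j} X) \cos\theta^2.$$
Summing the product over $j=1,2$ directly produces the claimed $\sin\theta^1\sin\theta^2,\sin\theta^1\cos\theta^2,\cos\theta^1\sin\theta^2,\cos\theta^1\cos\theta^2$ decomposition, where $a_{1,1},a_{2,1}$ involve $\bar g_{\cdot,i,j}\cdot\partial_{x_j}f^2$ and $a_{1,2},a_{2,2}$ involve $\bar g_{\cdot,i,j}\cdot f^2 N_{p+1}l_2\partial_{x_j}X$.

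The bounds on $a_{1,1},a_{2,1}$ follow by a routine Leibniz estimate using $\|\bar g_{\cdot,i,j}\|_{C^m}\lesssim N_{p+1}^{2(m-i)\beta_{p+1}}\ln^{C}$ and $\|\partial f^2\|_{C^m}\lesssim N_{p+1}^{2(m+1)\beta_{p+1}}\ln^{C}$, which in the worst case $i=0$ gives $\lesssim N_{p+1}^{2(m+1)\beta_{p+1}}\ln^{C}$, comfortably smaller than the claimed $N_{p+1}^{2(m-1)\beta_{p+1}+1}$ since $\beta_{p+1}<1/4$. For the contributions with $i\ge 1$ to $a_{1,2}$ and $a_{2,2}$, the same type of estimate combined with the polynomial bounds for $\partial X$ from Corollary \ref{cotasx1np1} (so $\|\partial X\|_{C^{m}}\lesssim\ln^{C}$ on $B_1(0)$) gives $\lesssim N_{p+1}^{2(m-i)\beta_{p+1}+1}\ln^{C}$, dominated by the claimed bound already at $i=1$.

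The main obstacle is the $i=0$ contribution to $a_{1,2}$: the direct Leibniz estimate overshoots by a factor $N_{p+1}^{2\beta_{p+1}}$. To fix this, I would use the explicit formulas $u^0_1(\rho)=abC_0\rho$, $u^0_2(\rho)=-b^2 C_0\rho$ from Corollary \ref{defuk}, where $(b,a)$ is the unit direction of $P_1 X$, so $\partial_{x_1} X(0)=b$ and $\partial_{x_2} X(0)=a$. Then the $i=0$ piece $\sum_j \bar g_{1,0,j}\cdot f^2 N_{p+1}l_2\partial_{x_j}X$ is proportional to $f^1 f^2 N_{p+1} l_2(ab\,\partial_{x_1} X - b^2\,\partial_{x_2} X)$, whose value at $x=0$ is $ab\cdot b - b^2\cdot a = 0$. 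I would therefore rewrite this quantity as $f^1 f^2 N_{p+1}l_2\bigl[ab\,\tilde G_1(x) - b^2\,\tilde G_2(x)\bigr]$ with $\tilde G_j(x):=\partial_{x_j}X(x)-\partial_{x_j}X(0)$. Since $\tilde G_j=O(|x|)$ and by Corollary \ref{cotasx1np1} we have $\|\tilde G_j\|_{L^\infty(\operatorname{supp} f^2)}\lesssim N_{p+1}^{-2\beta_{p+1}}\ln^{C}$ while $\|\tilde G_j\|_{C^k(\operatorname{supp} f^2)}\lesssim \ln^{C}$ for $k\ge 1$, a final Leibniz expansion on the support of $f^2$ produces the $C^m$ bound $\lesssim N_{p+1}^{2(m-1)\beta_{p+1}+1}\ln^{C}$ as required. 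The corresponding $i=0$ contribution to $a_{2,2}$ is automatically zero since $c^s_{0,0}=0$ in Lemma \ref{c00} forces $\bar g_{2,0,j}\equiv 0$ in the decomposition of Lemma \ref{boundsukgen}. Combining all of these bounds completes the proof.
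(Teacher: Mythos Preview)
Your proof is correct and follows essentially the same approach as the paper's. The paper organizes the argument as a split $u^k = u^0 + (u^k - u^0)$, handling the $u^0$ piece via the explicit formula from Corollary \ref{defuk} and the cancellation $\sin(\alpha_{p+1})\partial_{x_1}X(0)-\cos(\alpha_{p+1})\partial_{x_2}X(0)=0$, and the $(u^k-u^0)$ piece via the $i\ge 1$ bounds in Lemma \ref{boundsukgen}; your indexing over $i$ in the decomposition of Lemma \ref{boundsukgen}, with the $i=0$ case treated separately by the same cancellation, is exactly the same argument in slightly different bookkeeping. One minor imprecision: you write $\partial_{x_1}X(0)=b$, $\partial_{x_2}X(0)=a$, whereas in fact $\nabla X(0)=r(t)(b,a)$ for some modulus $r(t)$; this is harmless since the cancellation $ab\cdot (rb)-b^2\cdot(ra)=0$ holds regardless.
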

\begin{proof}
	We will consider the case $\lambda_{1}=\lambda_{2}=1$ to simplify the notation, but note that the general result then follows since all the operations involved are linear with respect to $\lambda_{1},\lambda_{2}$. Similarly, to simplify the notation we will take $\theta^{1}_{0}=\theta^{2}_{0}=0$, but again, note that this does not significantly alter the proof. To show this, we will decompose $u^{k}=(u^{k}-u^{0})+u^{0}$ and we will show the desired bounds for each of the two terms obtained.
	 For this, if we define $\alpha_{p+1}(t)$ such that
	 \begin{align}\label{alphap1u0}
	 	\cos(\alpha_{p+1}(t))=\frac{\p_{x_{1}}X(x=0,t)}{[(\p_{x_{1}}X(x=0,t))^2+\p_{x_{1}}X(x=0,t)]^2]^{\frac12}},\nonumber\\ \sin(\alpha_{p+1}(t))=\frac{\p_{x_{2}}X(x=0,t)}{[(\p_{x_{1}}X(x=0,t))^2+\p_{x_{1}}X(x=0,t)]^2]^{\frac12}}
	 \end{align}

	then we have that
	\begin{align*}&u^{0}(f^{1}_{N_{p+1}}(x)\sin(N_{p+1}l_{1}X(x,t)))\\&=C_{0}\cos(\alpha_{p+1}(t))f^{1}_{N_{p+1}}(x)\sin(N_{p+1}l_{1}X(x,t))(\sin(\alpha_{p+1}(t)),-\cos(\alpha_{p+1}(t)))\end{align*}
	so
	\begin{align*}
		&u^{0}(f^{1}_{N_{p+1}}(x)\sin(N_{p+1}l_{1}X(x,t)))\cdot\nabla (f^{2}_{N_{p+1}}(x)\sin(N_{p+1}l_{2}X(x,t)))\\
		&=C_{0}f^{1}_{N_{p+1}}(x)\cos(\alpha_{p+1}(t))\sin(N_{p+1}l_{1}X(x,t))N_{p+1}l_{2}\cos(N_{p+1}l_{2}X(x,t))\\
		&\times f^{2}_{N_{p+1}}(x)[\sin(\alpha_{p+1}(t))\p_{x_{1}}X(x,t)-\cos(\alpha_{p+1}(t))\p_{x_{2}}X(x,t)]\\
		&+C_{0}f^{1}_{N_{p+1}}(x)\cos(\alpha_{p+1}(t))\sin(N_{p+1}l_{1}X(x,t))\sin(N_{p+1}l_{2}X(x,t))\\
		&\times[\sin(\alpha_{p+1}(t))\p_{x_{1}}f^{2}_{N_{p+1}}(x)-\cos(\alpha_{p+1}(t))\p_{x_{2}}f^{2}_{N_{p+1}}(x)]
	\end{align*}
	but
	\begin{align*}
		&\|C_{0}f^{1}_{N_{p+1}}(x)\cos(\alpha_{p+1}(t))[\sin(\alpha_{p+1}(t))\p_{x_{1}}f^{2}_{N_{p+1}}(x)-\cos(\alpha_{p+1}(t))\p_{x_{2}}f^{2}_{N_{p+1}}(x)]\|_{C^{m}}\\
		&\leq C_{m}\sum_{i=0}^{m}\|f^{1}_{N_{p+1}}(x)\|_{C^{i}} \|f^{2}_{N_{p+1}}(x)\|_{C^{m-i+1}}\\
		&\leq C \ln(N_{p+1})^{C}N_{p+1}^{2(m+1)\beta_{p+1}}\leq C \ln(N_{p+1})^{C}N_{p+1}^{2(m-1)\beta_{p+1}+1}
	\end{align*}
where we used $\beta_{p+1}\leq \frac14$ and the constants in the last line depend on depend on  $\beta_{p},\beta_{p+1},$ $(a_{j})_{j\in\mathds{N}}$ and $(b_{j})_{j\in\mathds{N}}$.
On the other hand, by the definition of $\alpha_{p+1}(t)$ \eqref{alphap1u0} we have that
$$[\sin(\alpha_{p+1}(t))\p_{x_{1}}X(x=0,t)-\cos(\alpha_{p+1}(t))\p_{x_{2}}X(x=0,t)]=0$$
so
\begin{align*}
	&=\sin(\alpha_{p+1}(t))\p_{x_{1}}X(x,t)-\cos(\alpha_{p+1}(t))\p_{x_{2}}X(x,t)\\
	&= \sin(\alpha_{p+1}(t))(\p_{x_{1}}X(x,t)-\p_{x_{1}}X(x=0,t))-\cos(\alpha_{p+1}(t))(\p_{x_{2}}X(x,t)-\p_{x_{2}}X(x=0,t))
\end{align*}
and since we have
\begin{align*}
	&\|\sin(\alpha_{p+1}(t))(\p_{x_{1}}X(x,t)-\p_{x_{1}}X(x=0,t))-\cos(\alpha_{p+1}(t))(\p_{x_{2}}X(x,t)-\p_{x_{2}}X(x=0,t))\|_{L^{\infty}(B_{N_{p+1}^{-2\beta_{p+1}}[\ln(N_{p+1})]^4})}\\
	&\leq CN_{p+1}^{-2\beta_{p+1}}\ln(N)^{C}
\end{align*}
\begin{align*}
	&\|\sin(\alpha_{p+1}(t))(\p_{x_{1}}X(x,t)-\p_{x_{1}}X(x=0,t))-\cos(\alpha_{p+1}(t))(\p_{x_{2}}X(x,t)-\p_{x_{2}}X(x=0,t))\|_{C^{m}(B_{N_{p+1}^{-2\beta_{p+1}}[\ln(N_{p+1})]^4})}\\
	&\leq C\ln(N)^{C}
\end{align*}
where the constants depend on  $\beta_{p},\beta_{p+1},(a_{j})_{j\in\mathds{N}},(b_{j})_{j\in\mathds{N}}$ and $m$.
We can then use this to prove
\begin{align*}
	&\|C_{0}f^{1}_{N_{p+1}}(x)\cos(\alpha_{p+1}(t))N_{p+1}l_{2}f^{2}_{N_{p+1}}(x)[\sin(\alpha_{p+1}(t))\p_{x_{1}}X(x,t)-\cos(\alpha_{p+1}(t))\p_{x_{2}}X(x,t)]\|_{C^{m}}\\
	&\leq C_{m}\sum_{i=0}^{m}\|f^{1}_{N_{p+1}}(x)f^{2}_{N_{p+1}}(x)\|_{C^{i}}\|\sin(\alpha_{p+1}(t))\p_{x_{1}}X(x,t)-\cos(\alpha_{p+1}(t))\p_{x_{2}}X(x,t)\|_{C^{m-i}}\\
	&\leq C \ln(N_{p+1})^{C}N_{p+1}^{2(m-1)\beta_{p+1}+1}
\end{align*}
where the constants on depend on  $\beta_{p},\beta_{p+1},(a_{j})_{j\in\mathds{N}},(b_{j})_{j\in\mathds{N}}$ and $m$. Therefore we only need to show the analogous bound with $u^{k}-u^{0}$ to finish the proof. But, by Lemma \ref{boundsukgen}
\begin{align*}
	&u^{k}(f^{1}_{N_{p+1}}(x)\sin(N_{p+1}l_{1}X(x,t)))-u^{0}(f^{1}_{N_{p+1}}(x)\sin(N_{p+1}l_{1}X(x,t)))\\
	&=a(x,t)\sin(N_{p+1}l_{1}X(x,t))+b(x,t)\cos(N_{p+1}l_{1}X(x,t))
\end{align*}
with 
$$\|a(x,t)\|_{C^{m}},\|b(x,t)\|_{C^{m}}\leq  C \ln(N_{p+1})^{C}N_{p+1}^{2(m-1)\beta_{p+1}}$$
where the constants depend on $\beta_{p},\beta_{p+1},(a_{j})_{j\in\mathds{N}},(b_{j})_{j\in\mathds{N}}$, $k$ and $m$. Therefore
\begin{align*}
	&\|a(x,t) N_{p+1}l_{2}\p_{x_{1}}X(x,t)f^{2}_{N_{p+1}}(x)\|_{C^{m}}\leq C_{m}\sum_{i=0}^{m} \|a(x,t) )\|_{C^{i}} \|N_{p+1}l_{2}\p_{x_{1}}X(x,t)f^{2}_{N_{p+1}}(x)\|_{C^{m-i}}\\
	&\leq C \ln(N_{p+1})^{C}N_{p+1}^{2(m-1)\beta_{p+1}+1}\\
	&\|a(x,t) \p_{x_{1}}f^{2}_{N_{p+1}}(x)\|_{C^{m}}\leq C_{m}\sum_{i=0}^{m} \|a(x,t) )\|_{C^{i}} \|f^{2}_{N_{p+1}}(x)\|_{C^{m-i+1}}\leq C N_{p+1}^{2(m-1)\beta_{p+1}+1}\\
	&\|b(x,t) N_{p+1}l_{2}\p_{x_{2}}X(x,t)f^{2}_{N_{p+1}}(x)\|_{C^{m}}\leq C_{m}\sum_{i=0}^{m} \|b(x,t) )\|_{C^{i}} \|N_{p+1}l_{2}\p_{x_{2}}X(x,t)f^{2}_{N_{p+1}}(x)\|_{C^{m-i}}\\
	&\leq C \ln(N_{p+1})^{C}N_{p+1}^{2(m-1)\beta_{p+1}+1}\\
	&\|b(x,t) \p_{x_{2}}f^{2}_{N_{p+1}}(x)\|_{C^{m}}\leq C_{m}\sum_{i=0}^{m} \|b(x,t) )\|_{C^{i}} \|f^{2}_{N_{p+1}}(x)\|_{C^{m-i+1}}\leq C N_{p+1}^{2(m-1)\beta_{p+1}+1}
\end{align*}
with the constants on depend on  $\beta_{p},\beta_{p+1},(a_{j})_{j\in\mathds{N}},(b_{j})_{j\in\mathds{N}}$, $k$ and $m$, and this gives the bound for the remaining terms we wanted to bound, finishing the proof.
\end{proof}
 \begin{remark}
 	What Lemma \ref{cuadpeque} is telling us is that the interactions between functions considered in the lemma (which are, in some sense, highly anisotropic) is of lower order than what the rough a priori bounds would give, and in particular
 	$$u^{k}(\lambda_{1}f^{i}_{N_{p+1}}(x)\sin(N_{p+1}l_{1}X(x,t)+\theta^{1}_{0}))\cdot\nabla (\lambda_{2}f^{2}_{N_{p+1}}(x)\sin(N_{p+1}l_{2}X(x,t)+\theta^{2}_{0}))$$
 	is smaller ($N_{p+1}^{-2\beta_{p+1}}$ times smaller) than what one expects on first sight. This will be crucial later on to construct our $p+1$-layered solution, since we will be considering many of this kind of functions, which will interact weakly with each other.
 \end{remark}

\subsection{Constructing the $p+1$ layer}\label{subsecfinconst}
We are now ready to construct the $p+1$-layered solution from a generic $p$-layered solution. To do this we will use an iterative argument: We will start with a very rough guess of the behavior for the new layer, and then we will improve it iteratively until we obtain a solution to \ref{eq:IPMsystem} with the desired properties. More precisely, our first guess would require a very big source term to be a solution to the \ref{eq:IPMsystem} system, and each correction will make the source required smoother and smaller.

To address how we can find these corrections we have the following definition.

\begin{definition}\label{induccion}
	Given $\sum_{l=0}^{p}\rho_{l}(x,t)$ a $p$-layered solution, $\beta_{p+1}\in (0,\frac14)$,  (and $N_{p+1}$ chosen according to Remark \ref{remarknp1}), $M=\lceil\frac{1}{\beta_{p+1}^2}\rceil$ and, for $i=1,...,j$, $\rho_{p+1,j} $ fulfilling
    \begin{equation}\label{rhopi}
        \rho_{p+1,i}=\sum_{l=1}^{2^{i}}g^{1}_{l,i}(x,t)\sin(N_{p+1}lX(x,t))+\sum_{l=1}^{2^{i}}g^{2}_{l,i}(x,t)\cos(N_{p+1}lX(x,t)),
    \end{equation}
	we will define
	\begin{align*}
		\tilde{F}_{p+1,j}:=&P_{>0}\big[ u^{M}(\rho_{p+1,j})\cdot\nabla (\sum_{l=1}^{j}\rho_{p+1,l})+u^{M}(\sum_{l=1}^{j-1}\rho_{p+1,l})\cdot\nabla \rho_{p+1,j}\big]\\
		&+u^{M}(\rho_{p+1,j})\cdot\nabla (\sum_{l=0}^{p}\P_{M}(\rho_{l}))-u^{0}(\rho_{p+1,j})\cdot\nabla (\sum_{l=0}^{p}\P_{1}(\rho_{l}))\\
		&+(\P_{M}(u(\sum_{l=0}^{p}\rho_{l}))-\P_{1}(u(\sum_{l=0}^{p}\rho_{l})))\cdot \big[\sum_{l=1}^{2^{j}}\sin(N_{p+1}lX(x,t))\nabla g^{1}_{l,j}(x,t)+\sum_{l=1}^{2^{j}}\cos(N_{p+1}lX(x,t))\nabla g^{2}_{l,j}(x,t)\big]
	\end{align*}
    and
	\begin{align*}
		F_{p+1,j}:=&P_{0}\big[ u^{M}(\rho_{p+1,j})\cdot\nabla (\sum_{l=1}^{j}\rho_{p+1,l})+u^{M}(\sum_{l=1}^{j-1}\rho_{p+1,l})\cdot\nabla \rho_{p+1,j}\big]\\
		&+\big[u(\rho_{p+1,j})-u^{M}(\rho_{p+1,j})]\cdot\nabla (\sum_{l=1}^{j}\rho_{p+1,l})+\big[u(\sum_{l=1}^{j-1}\rho_{p+1,l})-u^{M}(\sum_{l=1}^{j-1}\rho_{p+1,l})\big]\cdot\nabla \rho_{p+1,j}\\
		&+u(\rho_{p+1,j})\cdot\nabla (\sum_{l=0}^{p}\rho_{l})-u^{M}(\rho_{p+1,j})\cdot\nabla (\sum_{l=0}^{p}\P_{M}(\rho_{l}))\\
		&+\big[u(\sum_{l=0}^{p}\rho_{l})-\P_{M}(u(\sum_{l=0}^{p}\rho_{l}))\big]\cdot \big[\sum_{l=1}^{2^{j}}g^{1}_{l,j}(x,t)\nabla\sin(N_{p+1}lX(x,t))+\sum_{l=1}^{2^{j}}g^{2}_{l,j}(x,t)\nabla\cos(N_{p+1}lX(x,t))\big]\\
		&+\big[u(\sum_{l=0}^{p}\rho_{l})-\P_{M}(u(\sum_{l=0}^{p}\rho_{l}))\big]\cdot \big[\sum_{l=1}^{2^{j}}\sin(N_{p+1}lX(x,t))\nabla g^{1}_{l,j}(x,t)+\sum_{l=1}^{2^{j}}\cos(N_{p+1}lX(x,t))\nabla g^{2}_{l,j}(x,t)\big]
	\end{align*}
	where 
	$$P_{0}[f(x)\sin(aX(x,t))]=0,\quad P_{0}[f(x)\cos(aX(x,t))]=\begin{cases}
		f(x)\cos(aX(x,t))\text{ if } a=0,\\
		0\text{ otherwise. }
	\end{cases}$$
	$$P_{>0}:=Id-P_{0}.$$
	This allows us to write
	$$\tilde{F}_{p+1,j}=\sum_{l=1}^{2^{j+1}}h^{1}_{l,j}(x,t)\sin(N_{p+1}lX(x,t))+\sum_{l=1}^{2^{j+1}}h^{2}_{l,j}(x,t)\cos(N_{p+1}lX(x,t)).$$
	We can then define 
	$$\rho_{p+1,j+1}(x,t)=\int_{t_{0}}^{t}S^{M}_{s,t}[-\tilde{F}_{p+1,j}(x,s)]ds$$
	with $t_{0}$ the value obtained from Lemma \ref{girot0}
	.

\end{definition}

Since these definitions are a little obscure without context, let us explain the significance of some of these functions, as well as some of the properties they have. The ultimate goal here is to take
$$\rho_{p+1}(x,t)=\sum_{j=1}^{J}\rho_{p+1,j}(x,t)$$
for some value of $J$. Let us now remember that $\rho_{p}$ fulfills the evolution equation
$$\p_{t}(\sum_{j=0}^{p}\rho_{j}(x,t))+u(\sum_{j=0}^{p}\rho_{j}(x,t))\cdot\nabla(\sum_{j=0}^{p}\rho_{j}(x,t))=\sum_{j=0}^{p}F_{j}(x,t).$$
If we now assume that
\begin{align*}
	&\p_{t}(\sum_{j=0}^{p}\rho_{j}(x,t)+\sum_{j=1}^{j_{0}}\rho_{p+1,j}(x,t))+u(\sum_{j=0}^{p}\rho_{j}(x,t)+\sum_{j=1}^{j_{0}}\rho_{p+1,j}(x,t))\cdot\nabla(\sum_{j=0}^{p}\rho_{j}(x,t)+\sum_{j=1}^{j_{0}}\rho_{p+1,j}(x,t))\\
	&=\sum_{j=0}^{p}F_{j}(x,t)+\sum_{j=0}^{j_{0}}F_{p+1,j}(x,t)+\tilde{F}_{p+1,j_{0}}(x,t),
\end{align*}
then the given definitions actually give us that
\begin{align*}
	&\p_{t}(\sum_{j=0}^{p}\rho_{j}(x,t)+\sum_{j=1}^{j_{0}+1}\rho_{p+1,j}(x,t))+u(\sum_{j=0}^{p}\rho_{j}(x,t)+\sum_{j=1}^{j_{0}+1}\rho_{p+1,j}(x,t))\cdot\nabla(\sum_{j=0}^{p}\rho_{j}(x,t)+\sum_{j=1}^{j_{0}+1}\rho_{p+1,j}(x,t))\\
	&=\sum_{j=0}^{p}F_{j}(x,t)+\sum_{j=0}^{j_{0}+1}F_{p+1,j}(x,t)+\tilde{F}_{p+1,j_{0}+1}(x,t),
\end{align*}
which we will prove later. We would like to show that these solutions that we are constructing iteratively for the \ref{eq:IPMsystem} system eventually have the desired behaviour, i.e., that one of these solutions gives a $p+1$-layered solution. The main difficulty here is to show that the source terms that would be associated to the $p+1$-layer, that is to say,
$$\sum_{j=1}^{j_{0}}F_{p+1,j}(x,t)+\tilde{F}_{p+1,j_{0}}(x,t),$$
has the desired smoothness and smallness for big $j_{0}$. The idea here is to check that all the terms $F_{p+1,j}(x,t)$ are smooth and small, and that $\tilde{F}_{p+1,j_{0}}(x,t)$ becomes smoother and smaller as $j_{0}$ goes to infinity.
It is also useful to keep in mind the moral difference between $F_{p+1,j}$ and $\tilde{F}_{p+1,j}$: $\tilde{F}_{p+1,j}$ are very explicit, and this allows us to use them to construct a better approximation for our solution by applying the solution operator $S^{M}_{t_{1},t_{2}}$ to them, while $F_{p+1,j}$ has less structure, but they are small enough that we can include them directly in the source term.

It is also worth mentioning the reason why we include the operator $P_{0}$ and $P_{>0}$ in our definitions. Basically, the operator $P_{0}$ gives the low frequency part of our errors, while $P_{>0}$ gives the high frequency part. The reason to distinguish between both types is twofold: First, the low frequency terms are much smoother, which means we can include them directly in the smoother part of the error terms, $F_{p+1,j}$. Secondly, for our iterative process we need to compute $u^{k}(\rho_{p+1,j})$, and the operator $u^{k}$ is not defined for the low frequency terms, so we cannot include them in our iterative construction.

Before we go on to prove the next lemma, that will allow us to construct our $p$-layered solutions, it is important to keep in mind some of the main properties that we will use through the proof.
\begin{enumerate}
    \item Since $N_{p+1}\geq e^{K_{-}\beta_{p}N_{p}^{\frac{\beta_{p+1}}{8}}}$, we have that
    $$\ln(N_{p+1})\geq K_{-}\beta_{p}N_{p}^{\frac{\beta_{p+1}}{8}}$$
    and thus we can use that
    $$N_{p}^{C}\leq C_{\beta_{p}}\ln(N_{p+1})^{C_{\beta_{p+1}}}.$$
    \item We choose 
    $$M:=\lceil\frac{1}{\beta_{p+1}^2}\rceil\leq \lceil\frac{4}{\beta_{p}^2}\rceil\leq K_{p}-1$$
    so in particular we have that $M+1\leq K_{p}$ and thus, using that $\P_{M}\sum_{l=0}^{p}\rho_{l}$ is a polynomial of degree $M$
    $$\|\P_{M}\sum_{l=0}^{p}\rho_{l}\|_{C^{m}(B_{1}(0))}=\|\P_{M}\sum_{l=0}^{p}\rho_{l}\|_{C^{M}(B_{1}(0))}\leq C_{M}N_{p}^{M}\leq C_{\beta_{p},\beta_{p+1}}\ln(N_{p+1})^{C_{\beta_{p},\beta_{p+1}}} $$
    and similarly
    $$\|\P_{M}u(\sum_{l=0}^{p}\rho_{l})\|_{C^{m}(B_{1}(0))}\leq C_{\beta_{p},\beta_{p+1}}\ln(N_{p+1})^{C_{\beta_{p},\beta_{p+1}}}. $$
    \item We will be considering functions fulfilling
    $$\p_{t}f+(\P_{1}(u(\sum_{l=0}^{p}\rho_{l})))\cdot\nabla f=0$$
    for $t\in[t_{0},1]$. In particular, using Lemma \ref{p1phi}, this will imply that, if $\text{supp}(f(x,t_{0}))\subset B_{N_{p+1}^{-2\beta_{p+1}}}(0)$, then for $N_{p+1}$ big, for $t\in[t_{0},1]$
    $$\text{supp}(f(x,t))\subset B_{CN_{p+1}^{-2\beta_{p+1}}N_{p}^{\frac{\beta_{p}}{4}}}(0)\subset B_{N_{p+1}^{-2\beta_{p+1}}\ln(N_{p+1})^4}(0).$$
\end{enumerate}

\begin{lemma}\label{itera}
	Given $\sum_{i=0}^{p}\rho_{i}(x,t)$ a $p$-layered solution and $\beta_{p+1}\in (0,\frac14)$, and $f(x)\in C^{\infty}$ fulfilling
	$\text{supp}f(x)\subset B_{1}(0)$, $f(x)=1$ if $|x|\leq \frac{1}{2}$ and
	$$f(x_{1},x_{2})=f(-x_{1},x_{2})=f(x_{1},-x_{2})=f(-x_{1},-x_{2}),$$
	  we define  
	 $$M:=\lceil\frac{1}{\beta_{p+1}^2}\rceil$$
	 with $\mathcal{A}_{1}(t)$ as in Definition \ref{soloperator} and
	$$\rho_{p+1,1}(x,t)=w(t)S^{M}_{t_{0},t}[f(N_{p+1}^{2\beta_{p+1}}x)\frac{\mathcal{A}_{1}(t_{0})}{N_{p+1}^{1-\beta_{p+1}}}\sin(N_{p+1}X(x,t=t_0))]=g^{1}_{1,1}(x,t)\sin(N_{p+1}X(x,t))$$
    \begin{equation}\label{wt}
        w(t)=\begin{cases}
		0\text{ if }t\leq t_{0},\\
		N_{p}(t-t_{0})\text{ if }t_{0}\leq t\leq t_{0}+N_{p}^{-1},\\
		1\text{ if }t\geq t_{0}+N_{p}^{-1}
	\end{cases}
    \end{equation}
\begin{align*}
	F_{p+1,0}=&w'(t)S^{M}_{t_{0},t}[f(N_{p+1}^{2\beta_{p+1}}x)\sin(N_{p+1}x_{1})],
\end{align*}
with $X(x,t)$ as in Definition \ref{soloperator}.
Then defining $\rho_{p+1,j}$, $F_{p+1,j}$, $\tilde{F}_{p+1,j}$ inductively using Definition \ref{induccion}, we conclude that $\rho_{p+1,j}$ are odd and we have that, for $N_{p}$ big enough (depending on $\beta_{p}$ and $\beta_{p+1}$), for any $j_{0}\in\mathds{N}$, $t\in[t_{0},1]$
\begin{align}\label{evolucionj0}
	&\p_{t}(\sum_{j=0}^{p}\rho_{j}(x,t)+\sum_{j=1}^{j_{0}}\rho_{p+1,j}(x,t))+u(\sum_{j=0}^{p}\rho_{j}(x,t)+\sum_{j=1}^{j_{0}}\rho_{p+1,j}(x,t))\cdot\nabla(\sum_{j=0}^{p}\rho_{j}(x,t)+\sum_{j=1}^{j_{0}}\rho_{p+1,j}(x,t))\\ \nonumber
	&=\sum_{j=0}^{p}F_{j}(x,t)+\sum_{j=0}^{j_{0}}F_{p+1,j}(x,t)+\tilde{F}_{p+1,j_{0}}(x,t).
\end{align}
These $\rho_{p+1,j}$ can be decomposed as in \eqref{rhopi} in Definition \ref{soloperator} with
\begin{equation}\label{boundg}
	\|g^{1}_{l,j}(\cdot,t)\|_{C^{m}},\|g^{2}_{l,j}(\cdot,t)\|_{C^{m}}\leq \mathcal{A}_{1}(t)C_{m,j,\beta_{p},\beta_{p+1}}\ln(N_{p+1})^{C_{m,j,\beta_{p},\beta_{p+1}}}N_{p+1}^{-1+[2m+2-j]\beta_{p+1}},
\end{equation}
\begin{equation}\label{boundh}
	\|h^{1}_{l,j}(\cdot,t)\|_{C^{m}},\|h^{2}_{l,j}(\cdot,t)\|_{C^{m}}\leq \mathcal{A}_{1}(t)C_{m,j,\beta_{p},\beta_{p+1}}\ln(N_{p+1})^{C_{m,j,\beta_{p},\beta_{p+1}}}N_{p+1}^{-1+[2m+1-j]\beta_{p+1}},
\end{equation}
and
\begin{equation}\label{support}
	\text{supp}(g^{1}_{l,j}(x,t)),\text{supp}(g^{2}_{l,j}),\text{supp}(h^{1}_{l,j}),\text{supp}(h^{2}_{l,j})\subset \text{supp}(\rho_{p+1,1})\subset B_{N_{p+1}^{-2\beta_{p+1}}\ln(N_{p+1})^4}(0).
\end{equation}
\end{lemma}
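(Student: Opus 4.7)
My plan is to proceed by induction on $j$, establishing simultaneously the decomposition \eqref{rhopi}, odd parity of $\rho_{p+1,j}$ and $\tilde F_{p+1,j}$, the quantitative bounds \eqref{boundg}--\eqref{support}, and the evolution identity \eqref{evolucionj0} at level $j_0=j$. The engine of the iteration is that Definition \ref{induccion} should gain exactly one factor of $N_{p+1}^{-\beta_{p+1}}$ in passing from the $g$-bound \eqref{boundg} at level $j$ to the $h$-bound \eqref{boundh} at level $j$, while the solution operator $S^M_{s,t}$ preserves the $h$-bound when producing $\rho_{p+1,j+1}$. This single-step gain will originate from Lemma \ref{cuadpeque} (the cancellation in the quadratic self-interaction $u^M(\rho_{p+1,j})\cdot\nabla\rho_{p+1,j}$), from the Taylor remainders $\P_M(\sum\rho_l)-\P_1(\sum\rho_l)$ and $\P_M(u(\sum\rho_l))-\P_1(u(\sum\rho_l))$, which vanish to order $\geq 2$ at the origin and are therefore tiny on $\text{supp}(\rho_{p+1,j})\subset B_{N_{p+1}^{-2\beta_{p+1}}\ln(N_{p+1})^4}(0)$, and from the high-order velocity approximation \eqref{cotaukx1} in Lemma \ref{boundsukgen} with $M=\lceil\beta_{p+1}^{-2}\rceil$.

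\textbf{Base case ($j=1$).} For $j=1$ I will check all properties directly. The polynomial $X(x,t=1)$ is linear and odd, and its truncated transport by $\P_M(u(\sum\rho_l))$, which is odd since the $p$-layered solution is odd, preserves odd parity; combined with the even cutoff $f$, this makes $\rho_{p+1,1}$ odd. The support bound \eqref{support} will follow from Lemma \ref{p1phi} applied to the transport of $f(N_{p+1}^{2\beta_{p+1}}\cdot)$ by $\P_1(u(\sum\rho_l))$: the Jacobian is bounded by $N_p^{\beta_p/4}\leq\ln(N_{p+1})^C$ via the parameter condition \eqref{cotasnb} of Definition \ref{player}. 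The $C^m$ bound \eqref{boundg} at $j=1$ will follow from the chain rule on $f(N_{p+1}^{2\beta_{p+1}}\phi(x,t_0,t))$ with the same Jacobian bound, together with Lemma \ref{crecimientoAp} giving $|\mathcal A_1(t)/\mathcal A_1(t_0)|\leq e^2$.

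\textbf{Inductive step.} Assume all conclusions hold through level $j$. First I will bound $\tilde F_{p+1,j}$ to obtain \eqref{boundh}: I apply Lemma \ref{cuadpeque} to each quadratic term in the first line of Definition \ref{induccion} after factoring out $\mathcal A_1(t)$ and rescaling the profiles so that the hypothesis $\|f_{N_{p+1}}\|_{C^m}\leq a_m N_{p+1}^{2m\beta_{p+1}}\ln(N_{p+1})^{b_m}$ is met. For the second and third lines, I expand $u^M(\rho_{p+1,j})$ via Lemma \ref{boundsukgen} into its mode decomposition \eqref{decompuk} controlled by \eqref{decompukbd}, and combine this with the Taylor remainder estimate $|\P_M f(x)-\P_1 f(x)|\leq C|x|^2\|f\|_{C^M}$, which yields a factor $(N_{p+1}^{-2\beta_{p+1}}\ln(N_{p+1})^4)^2\ln(N_{p+1})^C$ on $\text{supp}(\rho_{p+1,j})$; the analogous estimate for $\P_M u-\P_1 u$ handles the third line. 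Oddness is preserved since every operation commutes with $x\mapsto -x$. Next I define $\rho_{p+1,j+1}=\int_{t_0}^t S^M_{s,t}[-\tilde F_{p+1,j}]\,ds$; by Definition \ref{soloperator}, each mode $h^i_{l,j}(\cdot,s)\sin(N_{p+1}lX(\cdot,s))$ maps to $\mathcal A_s(t)\widetilde h^i_{l,j}(x,t)\sin(N_{p+1}lX(x,t))$ with $\widetilde h^i_{l,j}$ the $\P_1 u$-transport of $h^i_{l,j}$, and Lemmas \ref{p1phi} and \ref{crecimientoAp} (the latter giving $|\mathcal A_s(t)/\mathcal A_1(t)|\leq e^2$) will yield \eqref{boundg} and \eqref{support} at level $j+1$. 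Finally I verify \eqref{evolucionj0} at $j_0=j+1$ by computing $\p_t\rho_{p+1,j+1}$ using Remark \ref{evolsol0}, subtracting from the true IPM evolution of $\sum\rho_l+\sum_{l=1}^{j+1}\rho_{p+1,l}$, invoking the inductive hypothesis at level $j$, and identifying the residual terms with $F_{p+1,j+1}+\tilde F_{p+1,j+1}$ as prescribed by Definition \ref{induccion}.

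\textbf{Main obstacle.} The hardest part will be the algebraic matching in the last step: Remark \ref{evolsol0} provides an approximate evolution of $\rho_{p+1,j+1}$ that differs from the true IPM by precisely the quadratic cross-interactions, the Taylor-remainder corrections, and the $u-u^M$ and $\P_M u-u$ approximation errors; every one of these corrections must appear explicitly in Definition \ref{induccion} as one of the terms of $F_{p+1,j+1}$ or $\tilde F_{p+1,j+1}$, with the $P_0/P_{>0}$ splitting partitioning them correctly. In parallel, the quantitative bookkeeping of the gain of $N_{p+1}^{-\beta_{p+1}}$ per iteration requires carefully balancing the quadratic cancellation from Lemma \ref{cuadpeque} against the Taylor remainder scales, so that no spurious factor of $N_{p+1}^{\beta_{p+1}}$ is lost, while keeping the logarithmic prefactors $\ln(N_{p+1})^{C_{m,j,\beta_p,\beta_{p+1}}}$ uniformly controlled for the finite range of $j$ eventually used in the construction.
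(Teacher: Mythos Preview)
Your approach is essentially the paper's: induction on $j$, with Lemma \ref{cuadpeque} for the quadratic self-interaction, Lemma \ref{boundsukgen} for the $u^M$ mode decomposition, Taylor-remainder smallness of $\P_M-\P_1$ on the small support, Lemma \ref{p1phi} for transport Jacobians, and Remark \ref{evolsol0} for the algebraic matching in \eqref{evolucionj0}. The paper organizes the induction identically (first \eqref{evolucionj0}, then $h\Rightarrow g$ via $S^M$, then $g\Rightarrow h$ via Definition \ref{induccion}), so structurally you are on target.

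There is one real slip in your amplitude bookkeeping. You invoke Lemma \ref{crecimientoAp} twice as giving $|\mathcal A_1(t)/\mathcal A_1(t_0)|\leq e^2$ and $|\mathcal A_s(t)/\mathcal A_1(t)|\leq e^2$. Both are false: by Remark \ref{remarknp1}, $\mathcal A_1(t_0)=N_{p+1}^{-1/\beta_{p+1}}$ while $\mathcal A_1(1)=1$, so these ratios can be of size $N_{p+1}^{1/\beta_{p+1}}$. Lemma \ref{crecimientoAp} only gives the lower bound $\mathcal A_1(s_2)/\mathcal A_1(s_1)\geq e^{-2}$ for $s_2\geq s_1$, equivalently $\mathcal A_1(t)\leq e^2$ since $\mathcal A_1(1)=1$. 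The correct mechanism is not to bound these ratios but to let them cancel: the $h$-bound \eqref{boundh} already carries the factor $\mathcal A_1(s)$, and $S^M_{s,t}$ multiplies by $\mathcal A_1(t)/\mathcal A_1(s)$, so the integrand in $\rho_{p+1,j+1}=\int_{t_0}^t S^M_{s,t}[-\tilde F_{p+1,j}]\,ds$ is bounded by $\mathcal A_1(t)\cdot C\,N_{p+1}^{-1+[2m+1-j]\beta_{p+1}}$ uniformly in $s$, and integrating over $|t-t_0|\leq \tilde C N_p^{-3\beta_p/4}$ recovers \eqref{boundg} at level $j+1$. Similarly, in the quadratic step the product of two $g$-bounds gives $\mathcal A_1(t)^2$, and here (and only here) you use $\mathcal A_1(t)\leq e^2$ to reduce it to $\mathcal A_1(t)$. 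Once you replace your two ratio bounds by this cancellation-plus-$\mathcal A_1(t)\leq e^2$ argument, the rest of your plan goes through exactly as in the paper.
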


\begin{proof}
	Let us start by showing \eqref{evolucionj0} by induction. First, to show the case $j_{0}=1$, we note that
\begin{align*}
	F_{p+1,0}+F_{p+1,1}+\tilde{F}_{p+1,1}&=w'(t)S^{M}_{t_{0},t}[f(N_{p+1}^{2\beta_{p+1}}x)\sin(N_{p+1}x_{1})]+ u(\rho_{p+1,1})\cdot\nabla \rho_{p+1,1}\\	
	&+u(\rho_{p+1,1})\cdot\nabla (\sum_{l=0}^{p}\rho_{l})-u^{0}(\rho_{p+1,1})\cdot\nabla (\sum_{l=0}^{p}\P_{1}(\rho_{l}))\\
	&+\big[u(\sum_{l=0}^{p}\rho_{l})-\P_{M}(u(\sum_{l=0}^{p}\rho_{l}))\big] w(t)g^{1}_{1,1}(x,t)\cdot\nabla \sin(N_{p+1}X(x,t))\\
	&+\big[u(\sum_{l=0}^{p}\rho_{l})-\P_{1}(u(\sum_{l=0}^{p}\rho_{l}))\big] w(t)\sin(N_{p+1}X(x,t))\cdot\nabla g^{1}_{1,1}(x,t).
\end{align*}
On the other hand for \eqref{evolucionj0} to be true for $j_{0}=1$, using that $\sum_{j=0}^{p}\rho_{j}(x,t)$ is a solution to \eqref{eq:ipm}, we would need
$$F_{p+1,0}+F_{p+1,1}+\tilde{F}_{p+1,1}=\frac{d}{dt}\rho_{p+1,1}+u(\rho_{p+1,1})\cdot\nabla(\rho_{p+1,1}+\sum_{l=0}^{p}\rho_{l})+u(\sum_{l=0}^{p}\rho_{l})\cdot\nabla \rho_{p+1,1}$$
which is equivalent to
\begin{align*}
	&w(t)\p_{t}S^{M}_{t_{0},t}[f(N_{p+1}^{2\beta_{p+1}}x)\sin(N_{p+1}x_{1})]=-u^{0}(\rho_{p+1,1})\cdot\nabla (\sum_{l=0}^{p}\P_{1}(\rho_{l}))\\
 &-\big[\P_{M}(u(\sum_{l=0}^{p}\rho_{l})) g^{1}_{1,1}(x,t)\cdot\nabla \sin(N_{p+1}X(x,t))+\P_{1}(u(\sum_{l=0}^{p}\rho_{l})) \sin(N_{p+1}X(x,t))\cdot\nabla g^{1}_{1,1}(x,t)\big]
\end{align*}
which is true by our definition of $S^{M}_{t_{0},t}$ and Remark \ref{remarkunfsol}.
Now, assuming that the result is true for $j_{0}\leq J_{0}$, for \eqref{evolucionj0} to be true for $j_{0}=J_{0}+1$ we would need
\begin{align*}
	&\p_{t}\rho_{p+1,J_{0}+1}+u(\rho_{p+1,J_{0}+1})\cdot\nabla (\sum_{j=1}^{J_{0}+1}u(\rho_{p+1,j})+\sum_{j=0}^{p}\rho_{j})+u(\sum_{j=1}^{J_{0}}u(\rho_{p+1,j})+\sum_{j=0}^{p}\rho_{j})\cdot\nabla \rho_{p+1,J_{0}+1}\\
	&=F_{p+1,J_{0}+1}+\tilde{F}_{p+1,J_{0}+1}-\tilde{F}_{p+1,J_{0}}.
\end{align*}
As before, we now compute $F_{p+1,J_{0}+1}+\tilde{F}_{p+1,J_{0}+1}$:

\begin{align*}
	&F_{p+1,J_{0}+1}+\tilde{F}_{p+1,J_{0}+1}\\
	&=u(\rho_{p+1,J_{0}+1})\cdot\nabla (\sum_{j=1}^{J_{0}+1}\rho_{p+1,j})+u(\sum_{j=1}^{J_{0}}\rho_{p+1,j})\cdot\nabla \rho_{p+1,J_{0}+1}\\
	&+u(\rho_{p+1,J_{0}+1})\cdot\nabla(\sum_{l=0}^{p}\rho_{l})- u^{0}(\rho_{p+1,J_{0}+1})\cdot\nabla(\sum_{l=0}^{p}\P_{1}\rho_{l})\\
	&+\big[u(\sum_{l=0}^{p}\rho_{l})-\P_{M}(u(\sum_{l=0}^{p}\rho_{l}))\big]\cdot \big[\sum_{l=1}^{2^{J_{0}+1}}g^{1}_{l,J_{0}+1}(x,t)\nabla\sin(N_{p+1}lX(x,t))+\sum_{l=1}^{2^{J_{0}+1}}g^{2}_{l,J_{0}+1}(x,t)\nabla\cos(N_{p+1}lX(x,t))\big]\\
	&+\big[u(\sum_{l=0}^{p}\rho_{l})-\P_{1}(u(\sum_{l=0}^{p}\rho_{l}))\big]\cdot \big[\sum_{l=1}^{2^{J_{0}+1}}\sin(N_{p+1}lX(x,t))\nabla g^{1}_{l,J_{0}+1}(x,t)+\sum_{l=1}^{2^{J_{0}+1}}\cos(N_{p+1}lX(x,t))\nabla g^{2}_{l,J_{0}+1}(x,t)\big]
\end{align*}
and therefore it is enough to show that
\begin{align*}
	&\p_{t}\rho_{p+1,J_{0}+1}\\
	&=-\tilde{F}_{p+1,J_{0}}-u^{0}(\rho_{p+1,J_{0}+1})\cdot\nabla(\sum_{l=0}^{p}\P_{1}\rho_{l})\\
	&-\P_{M}(u(\sum_{l=0}^{p}\rho_{l}))\cdot \big[\sum_{l=1}^{2^{J_{0}+1}}g^{1}_{l,J_{0}+1}(x,t)\nabla\sin(N_{p+1}lX(x,t))+\sum_{l=1}^{2^{J_{0}+1}}g^{2}_{l,J_{0}+1}(x,t)\nabla\cos(N_{p+1}lX(x,t))\big]\\
	&-\P_{1}(u(\sum_{l=0}^{p}\rho_{l}))\cdot \big[\sum_{l=1}^{2^{J_{0}+1}}\sin(N_{p+1}lX(x,t))\nabla g^{1}_{l,J_{0}+1}(x,t)+\sum_{l=1}^{2^{J_{0}+1}}\cos(N_{p+1}lX(x,t))\nabla g^{2}_{l,J_{0}+1}(x,t)\big],
\end{align*}
but we can now use Remark \ref{evolsol0} to get

\begin{align*}
	&\p_{t}\rho_{p+1,J_{0}+1}=\frac{d}{dt}\int_{t_{0}}^{t}S^{M}_{s,t}[-\tilde{F}_{p+1,J_{0}}(x,s)]ds\\
	&=-\tilde{F}_{p+1,J_{0}}-u^{0}(\rho_{p+1,J_{0}+1})\cdot\nabla(\sum_{l=0}^{p}\P_{1}\rho_{l})\\
	&-\P_{M}(u(\sum_{l=0}^{p}\rho_{l}))\cdot \big[\sum_{l=1}^{2^{J_{0}+1}}g^{1}_{l,J_{0}+1}(x,t)\nabla\sin(N_{p+1}lX(x,t))+\sum_{l=1}^{2^{J_{0}+1}}g^{2}_{l,J_{0}+1}(x,t)\nabla\cos(N_{p+1}lX(x,t))\big]\\
	&-\P_{1}(u(\sum_{l=0}^{p}\rho_{l}))\cdot \big[\sum_{l=1}^{2^{J_{0}+1}}\sin(N_{p+1}lX(x,t))\nabla g^{1}_{l,J_{0}+1}(x,t)+\sum_{l=1}^{2^{J_{0}+1}}\cos(N_{p+1}lX(x,t))\nabla g^{2}_{l,J_{0}+1}(x,t)\big]
\end{align*}
which is exactly what we wanted to prove.

Now, to obtain the the bounds \eqref{boundg} and \eqref{boundh}, we will also argue by induction. Throughout this proof, we will omit the sub-index $\beta_{p},\beta_{p+1}$ to have a less cumbersome notation, but one should keep in mind that the constants $C$ involved in this proof depend on $\beta_{p},\beta_{p+1}$. Let us assume that we know the bound for $\tilde{F}_{p+1,j}$ to be correct when $j=j_{0}$, so 
$$\tilde{F}_{p+1,j_{0}}=\sum_{l=1}^{2^{j_{0}+1}}h_{l,j_{0}}^{1}(x,t)\sin(N_{p+1}lX(x,t))+\sum_{l=1}^{2^{j_{0}+1}}h_{l,j_{0}}^{2}(x,t)\cos(N_{p+1}lX(x,t)),$$
with $h_{l,j_{0}}^{1}(x,t)$, $h_{l,j_{0}}^{2}(x,t)$ fulfilling the bounds from \eqref{boundh}.
We now have that
$$\rho_{p+1,j_{0}+1}=\sum_{l=1}^{2^{j_{0}+1}}\int_{t_{0}}^{t}S_{s,t}[h_{l,j_{0}}^{1}(x,s)\sin(N_{p+1}lX(x,s))]ds+\sum_{l=1}^{2^{j_{0}+1}}\int_{t_{0}}^{t}S_{s,t}[h_{l,j_{0}}^{2}(x,s)\cos(N_{p+1}lX(x,s))]ds.$$

Note that, by the definition of the first order solution operator, we have that

$S_{s,t}[h_{l,j_{0}}^{1}(x,s)\sin(N_{p+1}lX(x,s))]=\frac{\mathcal{A}_{1}(t)}{\mathcal{A}_{1}(s)}f_{s}(x,t)\sin(N_{p+1}lX(x,t))$
with
$$\p_{t}f_{s}(x,t)+\P_{1}[u(\sum_{l=0}^{p}\rho_{l})]\cdot\nabla f(x,t)=0$$
$$f_{s}(x,t=s)=h_{l,j_{0}}^{1}(x,s),$$
and the analogous expression for $S_{s,t}[h_{l,j_{0}}^{2}(x,s)\cos(N_{p+1}lX(x,s)]$.

But
$f_{s}(x,s)=f_{s}(\phi(x,s,t),t)$
with 
$$\p_{t}\phi(x,s,t)=\P_{1}(u(\sum_{l=0}^{p}\rho_{l}))$$ 
$$\phi(x,s,s)=x,$$
which in particular implies $f_{s}(\phi(x,t,s),s)=f_{s}(x,t)$.
Applying Lemma \ref{p1phi}, we have that, for $i,j=1,2$, for $t,s\in[t_{0},1]$ 
$|\p_{x_{j}}\phi_{i}(x,t,s)|\leq CN_{p}^{\frac{\beta_{p}}{4}}$, and this combined with the fact that $\phi_{i}(x,t,s)$ are linear functions gives us
$$\|f_{s}(x,t)\|_{C^{j}}\leq (CN_{p}^{\frac{\beta_{p}}{4}})^{j}\|f_{s}(x,t=s)\|_{C^{j}}=(CN_{p}^{\frac{\beta_{p}}{4}})^{j}\|h_{l,j_{0}}^{1}(x,s)\|_{C^{j}}.$$
Finally, using that
$$S_{s,t}[h_{l,j_{0}}^{1}(x,s)\sin(N_{p+1}lX(x,s))]=\frac{\mathcal{A}_{1}(t)}{\mathcal{A}_{1}(s)}f_{s}(x,t)\sin(N_{p+1}lX(x,t))$$
we get
\begin{align*}
	\sin(N_{p+1}lX(x,t))g^{1}_{l,j_{0}+1}(x,t)=&\int_{t_{0}}^{t}S_{s,t}[h_{l,j_{0}}^{1}(x,s)\sin(N_{p+1}lX(x,s)]ds\\&=\sin(N_{p+1}lX(x,t))\int_{t_{0}}^{t}\frac{\mathcal{A}_{1}(t)}{\mathcal{A}_{1}(s)}h^{1}_{l,j}(\phi(x,t,s),s)ds
\end{align*}
and thus, using the bounds for $\|h_{l,j_{0}}^{1}(x,s)\|_{C^{m}}$, we obtain
\begin{align*}
	&\|g^{1}_{l,j_{0}+1}(x,t)\|_{C^{m}}\leq \|\int_{t_{0}}^{t}\frac{\mathcal{A}_{1}(t)}{\mathcal{A}_{1}(s)}h^{1}_{l,j_{0}}(\phi(x,t,s),s)ds\|_{C^{m}}\\
	&\leq CN_{p}^{\frac{m\beta_{p}}{4}}\int_{t_{0}}^{t}\frac{\mathcal{A}_{1}(t)}{\mathcal{A}_{1}(s)}\|h_{l,j_{0}}^{1}(x,s)\|_{C^{m}}ds\leq C_{m,j_{0},\beta_{p},\beta_{p+1}}N_{p}^{\frac{m\beta_{p}}{4}} \mathcal{A}_{1}(t)\ln(N_{p+1})^{C_{m,j_{0},\beta_{p},\beta_{p+1}}}N_{p+1}^{-1+[2m+1-j_{0}]\beta_{p+1}}
\end{align*}
and the exact same bounds holds for $g^{2}_{l,j_{0}+1}(x,t)$, and therefore the bounds for $g^{1}_{l,j_{0}+1},g^{2}_{l,j_{0}+1}$ are proved. Note that, since
$$\rho_{p+1,1}(x,t)=w(t)S^{M}_{t_{0},t}[f(N_{p+1}^{2\beta_{p+1}}x)\frac{\mathcal{A}_{1}(s)}{N_{p+1}^{1-\beta_{p+1}}}\sin(N_{p+1}X(x,t=t_0))]=g^{1}_{1,1}(x,t)\sin(N_{p+1}X(x,t))$$
and $|w(t)|\leq 1$ we can use the same argument to show
\begin{equation}\label{g111}
    \|g^{1}_{1,1}(x,t)\|_{C^{m}}\leq C\mathcal{A}_{1}(t)N_{p}^{\frac{m\beta_{p}}{4}}N_{p+1}^{-1+(2m+1)\beta_{p+1}}.
\end{equation}

Before we go on to prove the bounds for $\tilde{F}_{p+1,j}$, it is best to prove first \eqref{support}. 
For this, we start by noting that the inequality is obviously fulfilled for $\rho_{p+1,1}$ by Lemma \ref{p1phi}, and that
$$x\in\text{supp}(\rho_{p+1,1}(x,t))\Leftrightarrow \phi(x,t,t_{0})\in\text{supp}(\rho_{p+1,1}(x,t_{0}))$$
since the support of $\rho_{p+1,1}$ is getting transported with velocity $\P_{1}(u(\sum_{l=0}^{p}\rho_{l}))$.

 Furthermore, we note that, for any $j$, $\text{supp}(\tilde{F}_{p+1,j})\subset \rho_{p+1,j}$, since all the terms that form $\tilde{F}_{p+1,j}$ include either $u^{M}(\rho_{p+1,j})$, $u^{0}(\rho_{p+1,j})$ or derivatives of $\rho_{p+1,j}$, and all these operators maintain the support.
Therefore, we only need to show that, if $\tilde{F}_{p+1,j}$ has the right bounds for the support, then so does $\rho_{p+1,j+1}$.

But for this we note that, from the definition of $S_{s,t}^{M}$, we have 
\begin{align*}
	&\text{supp}(S_{s,t}^{M}(h^{1}_{l,j}(x,s)\sin(N_{p+1}lX(x,s))))=\{y\in \R^2: y= \phi(x,s,t), x\in\text{supp}(h^{1}_{l,j}(x,s))\}\\
	&\subset\{y\in \R^2: y= \phi(x,s,t), x\in\text{supp}(\rho_{p+1,1}(x,s))\}\subset \{y\in \R^2: y= \phi(x,s,t), \phi(x,s,t_{0})\in\text{supp}(\rho_{p+1,1}(x,t_{0}))\}\\
	&=\{y\in \R^2: y= \phi(\phi(z,t_{0},s),s,t), z\in\text{supp}(\rho_{p+1,1}(x,t_{0}))\}=\{y\in \R^2: y= \phi(z,t_{0},t), z\in\text{supp}(\rho_{p+1,1}(x,t_{0}))\}\\
	&=\{y\in \R^2: \phi(y,t,t_{0})\in\text{supp}(\rho_{p+1,1}(x,t_{0}))\}=\text{supp}(\rho_{p+1,1}(x,t))
\end{align*}
as we wanted to prove, and the analogous computation works for $h^{2}_{l,j}(x,s)\cos(N_{p+1}lX(x,s))$.

We want now to obtain the bounds for $h^{1}_{l,j},h^{2}_{l,j}$, assuming that \eqref{boundg} is correct for the desired value $j$. Note also that the bounds for $g^{1}_{1,1},g^{2}_{1,1}$ are trivially fulfilled from \eqref{g111} and $g^{2}_{1,1}=0$.

We start first with the terms coming from
\begin{align}\label{Fumbg}
	&u^{M}(\rho_{p+1,j})\cdot\nabla (\sum_{l=0}^{p}\P_{M}(\rho_{l}))-u^{0}(\rho_{p+1,1})\cdot\nabla (\sum_{l=0}^{p}\P_{1}(\rho_{l}))\nonumber\\
	&=u^{M}(\rho_{p+1,j})\cdot\nabla [(\sum_{l=0}^{p}\P_{M}(\rho_{l}))-(\sum_{l=0}^{p}\P_{1}(\rho_{l}))]+[u^{M}(\rho_{p+1,j})-u^{0}(\rho_{p+1,1})]\cdot\nabla (\sum_{l=0}^{p}\P_{1}(\rho_{l})).
\end{align}
Thus, we can use Lemma \ref{boundsukgen} (more precisely \eqref{decompuk} and \eqref{decompukbd}) and the bounds for $g^{1}_{l,j},g^{2}_{l,j}$ to get, for $q=1,2$
\begin{align*}
	&u_{q}^{M}(g^{1}_{l,j}(x,t)\sin(N_{p+1}lX(x,t)))-u_{q}^{0}(g^{1}_{l,j}(x,t)\sin(N_{p+1}lX(x,t)))\\
	&=a_{q}(x,t)\sin(N_{p+1}lX(x,t))+b_{q}(x,t)\cos(N_{p+1}lX(x,t))
\end{align*}
with
\begin{align*}
		&\|a_{q}(x,t)\|_{C^{m}},\|b_{q}(x,t)\|_{C^{m}}\leq C_{j,m}\ln(N_{p+1})^{C_{j,m}}\mathcal{A}_{1}(t)N_{p+1}^{-1+[2m-j]\beta_{p+1}}
\end{align*}
and thus, 
\begin{align*}
	&\|a_{q}(x)\p_{x_{q}}(\sum_{l=0}^{p}\P_{1}(\rho_{l}))\|_{C^{m}}\leq \sum_{i=0}^{m}\|a_{q}(x)\|_{C^{i}}\|\sum_{l=0}^{p}\P_{1}(\rho_{l})\|_{C^{m-i+1}}\\
	&\leq C_{j,m}\ln(N_{p+1})^{C_{j,m}}\mathcal{A}_{1}(t)N_{p+1}^{-1+[2m-j]\beta_{p+1}}
\end{align*}
and the same inequalities follow for the term including $b(x,t)$, so we get the desired bounds. For the other term in \eqref{Fumbg}, as before we can use Lemma \ref{boundsukgen} to get for $g=1,2$
\begin{align*}
	&u_{q}^{M}(g^{1}_{l,j}(x,t)\sin(N_{p+1}lX(x,t)))\\
	&=a_{q}(x,t)\sin(N_{p+1}lX(x,t))+b_{q}(x,t)\cos(N_{p+1}lX(x,t))
\end{align*}
and using the bounds for $a_{i}(x)$ obtained from Lemma \ref{boundsukgen} and the hypothesis for $g^{1}_{l,j},g^{1}_{l,j}$ we get
\begin{align*}
	&\|a_{q}(x,t)\cdot\p_{x_{q}} [(\sum_{l=0}^{p}\P_{M}(\rho_{l}))-(\sum_{l=0}^{p}\P_{1}(\rho_{l}))]\|_{C^{m}}\\
	&\leq C_{m}\sum_{i=0}^{m}\|a(x,t)\|_{C^{i}} \|\sum_{l=0}^{p}\P_{M}(\rho_{l})-\sum_{l=0}^{p}\P_{1}(\rho_{l})\|_{C^{m+1-i}(\text{supp}(\rho_{p+1,j}))}\\
	&\leq C_{m}\|a(x,t)\|_{C^{m-1}}\|\sum_{l=0}^{p}\P_{M}(\rho_{l})-\sum_{l=0}^{p}\P_{1}(\rho_{l})\|_{C^{m+1}}\\
	&+C_{m}\|a(x,t)\|_{C^{m}} \|\sum_{l=0}^{p}\P_{M}(\rho_{l})-\sum_{l=0}^{p}\P_{1}(\rho_{l})\|_{C^{1}(\text{supp}(\rho_{p+1,j}))}\\
	&\leq C_{j,m}\ln(N_{p+1})^{C_{j,m}}\mathcal{A}_{1}(t)N_{p+1}^{-1+[2(m-1)+2-j]\beta_{p+1}}+C_{j,m}\ln(N_{p+1})^{C_{j,m}}\mathcal{A}_{1}(t)N_{p+1}^{-1+[2m+2-j]\beta_{p+1}}N_{p+1}^{-2\beta_{p+1}}\\
	&\leq C_{j,m}\ln(N_{p+1})^{C_{j,m}}\mathcal{A}_{1}(t)N_{p+1}^{-1+[2m-j]\beta_{p+1}}.
\end{align*}
Next, we consider the term
\begin{align*}
	P_{>0}\big[ u^{M}(\rho_{p+1,j})\cdot\nabla (\sum_{l=1}^{j}\rho_{p+1,l})+u^{M}(\sum_{l=1}^{j-1}\rho_{p+1,l})\cdot\nabla \rho_{p+1,j}\big].
\end{align*}
But we can directly apply Lemma \ref{cuadpeque}  and the bounds for $g^{1}_{l,s},g^{2}_{l,s}$, $s=1,...,j$ to obtain
\begin{align*}
&u^{M}(\rho_{p+1,j})\cdot\nabla (\sum_{l=1}^{j}\rho_{p+1,l})+u^{M}(\sum_{l=1}^{j-1}\rho_{p+1,l})\cdot\nabla \rho_{p+1,j}\\
&=\sum_{l_{1}=1}^{2^{j}}\sum_{l_{2}=1}^{2^{j}}a_{l_{1},l_{2},j}(x,t)\sin(Nl_{1}X(x,t))\sin(Nl_{2}X(x,t))+\sum_{l_{1}=1}^{2^{j}}\sum_{l_{2}=1}^{2^{j}}b_{l_{1},l_{2},j}(x,t)\cos(Nl_{1}X(x,t))\sin(Nl_{2}X(x,t))\\
&+\sum_{l_{1}=1}^{2^{j}}\sum_{l_{2}=1}^{2^{j}}c_{l_{1},l_{2},j}(x,t)\sin(Nl_{1}X(x,t))\cos(Nl_{2}X(x,t))+\sum_{l_{1}=1}^{2^{j}}\sum_{l_{2}=1}^{2^{j}}d_{l_{1},l_{2},j}(x,t)\cos(Nl_{1}X(x,t))\cos(Nl_{2}X(x,t))
\end{align*}
with
\begin{align}\label{boundsabcd}
	&\|a_{l_{1},l_{2},j}\|_{C^{m}},\|b_{l_{1},l_{2},j}\|_{C^{m}},\|c_{l_{1},l_{2},j}\|_{C^{m}},\|d_{l_{1},l_{2},j}\|_{C^{m}}\leq C_{m,j}\mathcal{A}_{1}(t)^2\ln(N_{p+1})^{C_{m,j}}N_{p+1}^{-1+[2m+1-j]\beta_{p+1}}\nonumber\\
	&\leq C_{m,j}\mathcal{A}_{1}(t)\ln(N_{p+1})^{C_{m,j}}N_{p+1}^{-1+[2m+1-j]\beta_{p+1}}
\end{align}
where we used $\mathcal{A}_{1}(t)=\frac{\mathcal{A}_{1}(t)}{\mathcal{A}_{1}(1)}\leq  e^2$ by Lemma \ref{crecimientoAp}. To finish the estimates for this term, we just need to notice that using trigonometric identities, we can decompose this as
\begin{align}\label{cotasNl}
	\sum_{l=0}^{2^{j+1}}a_{l,j}(x,t)\sin(NlX(x,t))+b_{l,j}(x,t)\cos(NlX(x,t))
\end{align}

with $a_{l,j},b_{l,j}$ fulfilling the same bounds as in \eqref{boundsabcd}. Note that this includes a term of "zero frequency" that $P_{>0}$ eliminates.

Now we only need to  obtain bounds for the last term in $\tilde{F}_{p+1,j}$
$$(\P_{M}(u(\sum_{l=0}^{p}\rho_{l})-\P_{1}(u(\sum_{l=0}^{p}\rho_{l})))\cdot \big[\sum_{l=1}^{2^{j}}\sin(N_{p+1}lX(x,t))\nabla g^{1}_{l,j}(x,t)+\sum_{l=1}^{2^{j}}\cos(N_{p+1}lX(x,t))\nabla g^{2}_{l,j}(x,t)\big].$$

In this case, we only need to bound

$$\|(\P_{M}(u(\sum_{l=0}^{p}\rho_{l}))-\P_{1}(u(\sum_{l=0}^{p}\rho_{l})))\cdot  \nabla g^{i}_{l,j}(x,t)\|_{C^{m}}$$
for $i=1,2$, $l=1,2,...,2^{j}$.
But we can use the bounds for $u(\sum_{l=0}^{p}\rho_{l})$ to get, for $i=0,1$
$$\|(\P_{M}(u(\sum_{l=0}^{p}\rho_{l}))-\P_{1}(u(\sum_{l=0}^{p}\rho_{l})))\|_{C^{i}(B_{N_{p+1}^{-2\beta_{p+1}}\ln(N_{p+1})^4}(0))}\leq CN_{p+1}^{-(2-i)2\beta_{p+1}}\ln(N_{p+1})^{C}$$
and, for $m\geq 2$
$$\|(\P_{M}(u(\sum_{l=0}^{p}\rho_{l}))-\P_{1}(u(\sum_{l=0}^{p}\rho_{l})))\|_{C^{m}(B_{N_{p+1}^{-2\beta_{p+1}}\ln(N_{p+1})^4}(0))}\leq C_{m}\ln(N_{p+1})^{C_{m}}$$
so
\begin{align*}
	&\|(\P_{M}(u(\sum_{l=0}^{p}\rho_{l}))-\P_{1}(u(\sum_{l=0}^{p}\rho_{l})))\cdot  \nabla g^{i}_{l,j}(x,t)\|_{C^{m}}\\
	&\leq C_{m,j} \ln(N_{p+1})^{C_{m,j}}N_{p+1}^{-1+\beta_{p+1}(2m-j)}
\end{align*}
which is the last bound we needed and thus the proof is finished.
\end{proof}

We will use Lemma \ref{itera} to construct our $p+1$-layered solution, but we still need to show that this process actually gives us the desired properties.

\begin{lemma}\label{forces}
	Given $\sum_{l=0}^{p}\rho_{l}(x,t)$ a $p$-layered solution, $\beta_{p+1}\in (0,\frac14)$, and $f(x)\in C^{\infty}$ fulfilling
	$\text{supp}f(x)\subset B_{1}(0)$, $f(x)=1$ if $|x|\leq \frac{1}{2}$ and
	$$f(x_{1},x_{2})=f(-x_{1},x_{2})=f(x_{1},-x_{2})=f(-x_{1},-x_{2}),$$
	let $\rho_{p+1,j}(x,t),F_{p+1,j}(x,t),\tilde{F}_{p+1,j}(x,t)$ be as given by Lemma \ref{itera}. Then we have that, if $N_{p}$ is big enough (depending on $\beta_{p},\beta_{p+1}$), for 
	$$m_{0}:=\lfloor\frac{1}{4\beta_{p+1}}\rfloor-10$$ 
	$$\|F_{p+1,j}(x,t)\|_{C^{m_{0}}}\leq  N_{p+1}^{-\frac12},$$
	$$\|\tilde{F}_{p+1,M}(x,t)\|_{C^{m_{0}}}\leq N_{p+1}^{-1}.$$
\end{lemma}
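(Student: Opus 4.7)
The plan is to bound each summand in Definition \ref{induccion} for $F_{p+1,j}$ and $\tilde F_{p+1,M}$ in $C^{m_0}$ by directly applying the estimates \eqref{boundg}--\eqref{boundh} from Lemma \ref{itera}, together with Lemmas \ref{boundsukgen} and \ref{cuadpeque}, and then verifying that with the specific calibrations $m_0 = \lfloor 1/(4\beta_{p+1})\rfloor - 10$ and $M = \lceil 1/\beta_{p+1}^2\rceil$ the resulting exponent of $N_{p+1}$ is sufficiently negative. Throughout I will repeatedly exploit three facts: the choice in Remark \ref{remarknp1} gives $\mathcal{A}_1(t) \leq N_{p+1}^{1/\beta_{p+1}}$ on $[t_0,1]$; the condition $N_{p+1} \geq e^{K_-\beta_p N_p^{\beta_{p+1}/8}}$ lets me absorb any $N_p^{C}$ factor into $\ln(N_{p+1})^{C_{\beta_p,\beta_{p+1}}}$; and the support confinement \eqref{support} places every $g^i_{l,j}$ inside $B_{N_{p+1}^{-2\beta_{p+1}}\ln(N_{p+1})^4}(0)$.

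For $\tilde F_{p+1,M}$ the bound follows essentially by direct application of \eqref{boundh} with $m = m_0$ and $j = M$: the exponent $-1 + (2m_0+1-M)\beta_{p+1}$ is very negative because $M\beta_{p+1} \geq 1/\beta_{p+1}$ dominates $(2m_0+1)\beta_{p+1} \leq \tfrac12$, and combined with the $\mathcal{A}_1$ prefactor plus the finite (in $\beta_{p+1}$) sum over $l = 1, \dots, 2^{M+1}$ one obtains the required decay after absorbing polylog terms into $N_{p+1}^{o(1)}$ via the choice of $N_p$ large.

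The analysis of $F_{p+1,j}$ proceeds term by term according to Definition \ref{induccion}. The low-frequency quadratic piece $P_0[u^M(\rho_{p+1,j})\cdot\nabla\sum_l\rho_{p+1,l} + u^M(\sum_l\rho_{p+1,l})\cdot\nabla\rho_{p+1,j}]$ is controlled by Lemma \ref{cuadpeque}, which provides the decisive extra factor $N_{p+1}^{-2\beta_{p+1}}$ and yields the same $-1 + (\cdot)\beta_{p+1}$ shape as for $h$ itself. The terms $[u - u^{M}](\rho_{p+1,j})\cdot\nabla(\cdots)$ are controlled by the velocity-approximation estimate \eqref{cotaukx1} of Lemma \ref{boundsukgen}, whose gain factor $N_{p+1}^{-M\beta_{p+1}}$ is much more than needed. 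The mixed ``background'' term $u(\rho_{p+1,j})\cdot\nabla\sum_l\rho_l - u^{M}(\rho_{p+1,j})\cdot\nabla\sum_l\P_M\rho_l$ is split into a velocity-approximation piece plus a remainder $u^{M}(\rho_{p+1,j})\cdot\nabla[\sum\rho_l - \sum\P_M\rho_l]$, which is small because $\sum\rho_l - \P_M\sum\rho_l$ vanishes to order $M+1$ at the origin and $\rho_{p+1,j}$ is supported in a ball of radius $N_{p+1}^{-2\beta_{p+1}}\ln(N_{p+1})^4$; the final two terms involving $u(\sum\rho_l) - \P_M u(\sum\rho_l)$ are handled identically, using the Taylor remainder combined with $\|\sum_l\rho_l\|_{C^{K_p}} \leq 2N_p^{K_p}$ and $K_p \geq M+1$. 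The seed term $F_{p+1,0}$ is an outlier only because of the factor $|w'| \leq N_p$, but this is at most $\ln(N_{p+1})^{C_{\beta_p}}$, and the $C^{m_0}$-norm of the $S^M$-output is bounded as in the $g^1_{1,1}$ estimate in the proof of Lemma \ref{itera}.

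The main obstacle is purely bookkeeping: verifying, term by term, that after combining the amplitude factor $\mathcal{A}_1 \leq N_{p+1}^{1/\beta_{p+1}}$ with the gains coming from the velocity approximation, the polynomial approximation of the background, or the near-orthogonality of Lemma \ref{cuadpeque}, the resulting exponent of $N_{p+1}$ drops below the required threshold. The calibrated choices of $m_0$ and $M$ are made precisely so that the two cancellations $(2m_0+1)\beta_{p+1} \leq \tfrac12$ and $M\beta_{p+1} \geq 1/\beta_{p+1}$ close the budget with margin to spare, after which the polylog prefactors and the $\beta$-dependent constants are absorbed by taking $N_p$ sufficiently large.
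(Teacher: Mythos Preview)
Your overall plan matches the paper's, but there is one genuine error that would make the bookkeeping fail: your amplitude bound. You write ``the choice in Remark \ref{remarknp1} gives $\mathcal{A}_1(t) \leq N_{p+1}^{1/\beta_{p+1}}$ on $[t_0,1]$'' and then propose to carry this factor through every estimate. This is far too weak. By Definition \ref{soloperator} the amplitude is normalized so that $\mathcal{A}_{1}(1)=1$, and the almost-monotonicity \eqref{amplitudmonotona} of Lemma \ref{crecimientoAp} gives $\mathcal{A}_1(t)\le e^{2}$ for all $t\in[t_0,1]$; what Remark \ref{remarknp1} actually says is $\mathcal{A}_1(t_0)=N_{p+1}^{-1/\beta_{p+1}}$, i.e.\ the amplitude is \emph{tiny} at $t_0$, not that it is ever large. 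The paper uses $\mathcal{A}_1(t)=O(1)$ explicitly (both inside the proof of Lemma \ref{itera} to reduce $\mathcal{A}_1^2$ to $\mathcal{A}_1$, and again in the present proof).

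With your bound the budget does not close. Take the $P_0$ quadratic piece of $F_{p+1,j}$ for a small index, say $j=1$: by Lemma \ref{cuadpeque} (or equivalently \eqref{boundsabcd}) it is controlled by $\mathcal{A}_1(t)^2\,N_{p+1}^{-1+2m_0\beta_{p+1}}$ up to logs. Using $\mathcal{A}_1\le e^2$ the exponent is $-1+2m_0\beta_{p+1}\le -\tfrac12$, which is what is needed. Using your bound $\mathcal{A}_1\le N_{p+1}^{1/\beta_{p+1}}$ instead produces an exponent of order $2/\beta_{p+1}-\tfrac12$, which is large and positive; the ``gain'' from Lemma \ref{cuadpeque} is only $N_{p+1}^{-2\beta_{p+1}}$ and cannot absorb this. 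The same problem contaminates every term in $F_{p+1,j}$ that carries a $g^{i}_{l,j}$ factor.

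A second, smaller omission: for $\tilde F_{p+1,M}$ you quote the exponent $-1+(2m_0+1-M)\beta_{p+1}$, but that is the $C^{m_0}$ bound on the coefficients $h^{i}_{l,M}$ alone. Passing to $\|\tilde F_{p+1,M}\|_{C^{m_0}}$ costs an additional factor $N_{p+1}^{m_0}$ from differentiating $\sin(N_{p+1}lX)$ (Lemma \ref{boundsfinalproduct}); the paper's exponent is $-1+m_0+(1-M)\beta_{p+1}$. With $\mathcal{A}_1=O(1)$ this still closes since $M\beta_{p+1}\ge 1/\beta_{p+1}$ dominates $m_0\le 1/(4\beta_{p+1})$, but you should make this step explicit.
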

\begin{proof}
	As in other proofs, the constants in this proof will depend on $\beta_{p},\beta_{p+1}$, but we will omit adding the sub-indexes to obtain a more compact notation. Note that this also means that the dependence on $M$ and $m_{0}$ will be implicit.
	Let us start by bounding $\|F_{p+1,0}\|_{C^{m_{0}}}$, since it has a different structure than the rest of the terms. We have that
	$$F_{p+1,0}=N_{p}S^{M}_{t_{0},t}[f(N_{p+1}^{2\beta_{p+1}}x)\frac{\mathcal{A}_{1}(t_{0})}{N_{p+1}^{1-\beta_{p+1}}}\sin(N_{p+1}x_{1})]$$
	if $t\in[t_{0},t_{0}+N_{p}^{-1}]=N_{p}$, and $0$ otherwise.
	Note that, by Remark \ref{remarknp1} we have that
	$$\frac{\mathcal{A}_{1}(t_{0})}{N_{p+1}^{1-\beta_{p+1}}}=N_{p+1}^{-1+\beta_{p+1}-\frac{1}{\beta_{p+1}}}.$$
	By the definition of $S^{M}_{t_{0},t}$, we have that
	$$N_{p}S^{M}_{t_{0},t}[f(N_{p+1}^{2\beta_{p+1}}x)\frac{\mathcal{A}_{1}(t_{0})}{N_{p+1}^{1-\beta_{p+1}}}\sin(N_{p+1}x_{1})]=N_{p}f(x,t)\frac{\mathcal{A}_{1}(t)}{N_{p+1}^{1-\beta_{p+1}}}\sin(N_{p+1}X(x,t))$$
	with
	$$\p_{t}f(x,t)+\P_{1}[u(\sum_{l=0}^{p}\rho_{l})]\cdot\nabla f(x,t)=0$$
	$$f(x,t=t_{0})=f(N_{p+1}^{2\beta_{p+1}}x).$$
	But
	$f(x,t)=f(\phi(x,t,t_{0}),t_{0})$
	with 
	$$\p_{t}\phi(x,s,t)=\P_{1}(u(\sum_{l=0}^{p}\rho_{l}))$$ 
	$$\phi(x,s,s)=x.$$
	Applying Lemma \ref{p1phi}, we have that, for $i,j=1,2$, for $t\in[t_{0},1]$ 
	$|\p_{x_{j}}\phi_{i}(x,t,t_{0})|\leq CN_{p}^{\frac{\beta_{p}}{4}}$, and this combined with the fact that $\phi_{i}(x,t,t_{0})$ are linear functions gives us
	$$\|f(x,t)\|_{C^{j}}\leq (CN_{p}^{\frac{\beta_{p}}{4}})^{j}\|f(x,t_{0})\|_{C^{j}}=(CN_{p}^{\frac{\beta_{p}}{4}})^{j}\|f(N_{p+1}^{2\beta_{p+1}}x)\|_{C^{j}}\leq CN_{p}^{j\frac{\beta_{p}}{4}}N_{p+1}^{2j\beta_{p+1}}.$$
	We can also bound $\mathcal{A}_{1}(t)$ by using that, from Definition \ref{soloperator}
	$$|\p_{t}\mathcal{A}_{1}(t)|\leq |\mathcal{A}_{1}(t)3N_{p}^{\beta_{p}}|$$
	and thus, for $t\in[t_{0},t_{0}+N_{p}^{-1}]$
	$$|\frac{\mathcal{A}_{1}(t)}{\mathcal{A}_{1}(t_{0})}|\leq C.$$
	We can then apply Lemma \ref{boundsfinalproduct} to get
	$$\|F_{p+1,0}(x,t)\|_{C^{m_{0}}}\leq C\ln(N_{p+1})^{C}N_{p+1}^{m_{0}-1+\beta_{p+1}-\frac{1}{\beta_{p+1}}}\leq N_{p+1}^{-1}.$$
	Next, we obtain the bounds for $F_{p+1,j}$, $j>0$. First, to bound
	$$\|P_{0}\big[ u^{M}(\rho_{p+1,j})\cdot\nabla (\sum_{l=1}^{j}\rho_{p+1,l})+u^{M}(\sum_{l=1}^{j-1}\rho_{p+1,l})\cdot\nabla \rho_{p+1,j}\big]\|_{C^{m_{0}}}$$
	we notice that in \eqref{cotasNl} we already showed that
	$$P_{0}\big[ u^{M}(\rho_{p+1,j})\cdot\nabla (\sum_{l=1}^{j}\rho_{p+1,l})+u^{M}(\sum_{l=1}^{j-1}\rho_{p+1,l})\cdot\nabla \rho_{p+1,j}\big]=b_{0,j}(x,t)$$
	with
	\begin{align*}
		&\|b_{0,j}(x,t)\|_{C^{m_{0}}}\leq C\mathcal{A}_{1}(t)\ln(N_{p+1})^{C}N_{p+1}^{-1+2m_{0}\beta_{p+1}}\\
		&\leq C\ln(N_{p+1})^{C}N_{p+1}^{-1+2m_{0}\beta_{p+1}}\leq \frac{1}{10}N_{p+1}^{-\frac12}
	\end{align*}
	which finishes the bounds of this term.
	Next, we study the term
	\begin{align*}
		&\|\big[u(\rho_{p+1,j})-u^{M}(\rho_{p+1,j})]\cdot\nabla (\sum_{l=1}^{j}\rho_{p+1,l})\|_{C^{m_{0}}}\\
		&\leq C\sum_{i=0}^{m_{0}}\|u(\rho_{p+1,j})-u^{M}(\rho_{p+1,j})\|_{C^{m-i}}\|(\sum_{l=1}^{j}\rho_{p+1,l})\|_{C^{i+1}}
	\end{align*}
but we can then  use Lemma \ref{boundsukgen} to obtain
	
	\begin{align*}
		&\|u(\rho_{p+1,j})-u^{M}(\rho_{p+1,j})\|_{C^{m-i}}\leq CN_{p+1}^{2(m-i+2)-M\beta_{p+1}}\ln(N)^{C},
	\end{align*}
	and the bounds obtained in Lemma \ref{itera} and Lemma \ref{boundsfinalproduct} give us
	\begin{align*}
		\|(\sum_{l=1}^{j}\rho_{p+1,l})\|_{C^{i+1}}\leq C_{i}\ln(N_{p+1})^{C_{i}} N_{p+1}^{i+\beta_{p+1}}
	\end{align*}
	so
	\begin{align*}
		&\|\big[u(\rho_{p+1,j})-u^{M}(\rho_{p+1,j})]\cdot\nabla (\sum_{l=1}^{j}\rho_{p+1,l})\|_{C^{m_{0}}}\\
		&\leq C\ln(N_{p+1})^{C}N_{p+1}^{2(m_{0}+2)-M\beta_{p+1}} N_{p+1}\\
		&\leq C\ln(N_{p+1})^{C}N_{p+1}^{2m_{0}+5-M\beta_{p+1}} \leq \frac{1}{10}N_{p+1}^{-1}.
	\end{align*}
	In a completely analogous way we also obtain
	\begin{align*}
		&\|\big[u(\sum_{l=1}^{j-1}\rho_{p+1,l})-u^{M}(\sum_{l=1}^{j-1}\rho_{p+1,l})\big]\cdot\nabla \rho_{p+1,j}\|_{C^{m_{0}}}\leq \frac{1}{10}N_{p+1}^{-1}.
	\end{align*}
	Next we have
	\begin{align*}
		&\|u(\rho_{p+1,j})\cdot\nabla (\sum_{l=0}^{p}\rho_{l})-u^{M}(\rho_{p+1,j})\cdot\nabla (\sum_{l=0}^{p}\P_{M}(\rho_{l}))\|_{C^{m_{0}}}\\
		&\leq\|[u(\rho_{p+1,j})-u^{M}(\rho_{p+1,j})]\cdot\nabla (\sum_{l=0}^{p}\rho_{l})\|_{C^{m_{0}}}+\|u^{M}(\rho_{p+1,j})\cdot\nabla [(\sum_{l=0}^{p}\rho_{l})-(\sum_{l=0}^{p}\P_{M}(\rho_{l}))]\|_{C^{m_{0}}}.
	\end{align*}
We can use the bound 
$$\|(\sum_{l=0}^{p}\rho_{l})\|_{C^{m_{0}}}\leq C\ln(N_{p+1})^{C}$$
to bound
\begin{align*}
	&\|[u(\rho_{p+1,j})-u^{M}(\rho_{p+1,j})]\cdot\nabla (\sum_{l=0}^{p}\rho_{l})\|_{C^{m_{0}}}\leq C\ln(N_{p+1})^{C}N_{p+1}^{2(m_{0}+2)-M\beta_{p+1}}\\
	&\leq \frac{1}{10}N_{p+1}^{-1}
\end{align*}
as we did before, while for the other term we have
\begin{align*}
	&\|u^{M}(\rho_{p+1,j})\cdot\nabla [(\sum_{l=0}^{p}\rho_{l})-(\sum_{l=0}^{p}\P_{M}(\rho_{l}))]\|_{C^{m_{0}}}\\
	&\leq C\ln(N_{p+1})^{C}(N_{p+1})^{m_{0}}\|(\sum_{l=0}^{p}\rho_{l})-(\sum_{l=0}^{p}\P_{M}(\rho_{l}))\|_{C^{m_{0}+1}(B_{N_{p+1}^{-2\beta_{p+1}}\ln(N_{p+1})^4(0)})}\\
	&\leq C\ln(N_{p+1})^{C}N_{p+1}^{m_{0}-2(M-m_{0})\beta_{p+1}}\leq \frac{1}{10}N_{p+1}^{-1}.
\end{align*}
Finally we have
\begin{align*}
	A:=&\\
	&\big[u(\sum_{l=0}^{p}\rho_{l})-\P_{M}(u(\sum_{l=0}^{p}\rho_{l}))\big]\cdot \big[\sum_{l=1}^{2^{j}}g^{1}_{l,j}(x,t)\nabla\sin(N_{p+1}lX(x,t))+\sum_{l=1}^{2^{j}}g^{2}_{l,j}(x,t)\nabla\cos(N_{p+1}lX(x,t))\big]\\
	&+\big[u(\sum_{l=0}^{p}\rho_{l})-\P_{M}(u(\sum_{l=0}^{p}\rho_{l}))\big]\cdot \big[\sum_{l=1}^{2^{j}}\sin(N_{p+1}lX(x,t))\nabla g^{1}_{l,j}(x,t)+\sum_{l=1}^{2^{j}}\cos(N_{p+1}lX(x,t))\nabla g^{2}_{l,j}(x,t)\big].
\end{align*}
We note that
\begin{align*}
	&\|A\|_{C^{m_{0}}}\leq \|u(\sum_{l=0}^{p}\rho_{l})-\P_{M}(u(\sum_{l=0}^{p}\rho_{l}))\|_{C^{m_{0}}(B_{N_{p+1}^{-2\beta_{p+1}}\ln(N_{p+1})^4(0)})}\|\rho_{p+1,j}\|_{C^{m_{0}+1}}\\
	&\leq C\ln(N_{p+1})^{C}N_{p+1}^{-2\beta_{p+1}(M+1-m_{0})}N_{p+1}^{m_{0}+\beta_{p+1}}\leq \frac{1}{10} N_{p+1}^{-1}.
\end{align*}

To obtain the bounds for $\tilde{F}_{p+1,m_{0}}$ we just need to use \eqref{boundh} and Lemma \ref{boundsfinalproduct} to get

$$\|\tilde{F}_{p+1,M}\|_{C^{m_{0}}}\leq C\ln(N_{p+1})^{C}N_{p+1}^{-1+m_{0}+[1-M]\beta_{p+1}}\leq N_{p+1}^{-1}.$$

\end{proof}

We are now ready to prove the last lemma in this paper, which will show that we can iteratively construct our $p+1$-layered solution. For the sake of clarity, we will include in our statement all the previous choices explicitly.

\begin{lemma}\label{finalitera}
	Given $\sum_{i=0}^{p}\rho_{i}(x,t)$ a $p$-layered solution with parameters $(N_{i},\beta_{i},k_{i},K_{i})$ ($i=0,1,...,p$), $N_{p+1}$, $\beta_{p+1}\in (\frac{\beta_{p}}{2},\frac14)$ (chosen according to Remark \ref{remarknp1}), if $K_{p}\geq \lceil\frac{4}{\beta_{p}^2}\rceil+1$ , then given $k_{p+1}\geq \lceil\frac{4}{\beta_{p}^2}\rceil+1$, and $f(x)\in C^{\infty}$ fulfilling
	$\text{supp}f(x)\subset B_{1}(0)$, $f(x)=1$ if $|x|\leq \frac{1}{2}$ and
	$$f(x_{1},x_{2})=f(-x_{1},x_{2})=f(x_{1},-x_{2})=f(-x_{1},-x_{2}),$$
if $N_{p}$ is big enough (depending only on $\beta_{p},\beta_{p+1}$, $k_{p+1}$ and the choice of $f(x)$), then we have that, for $M:=\lceil\frac{1}{\beta_{p+1}^2}\rceil$,
$$\sum_{i=0}^{p}\rho_{i}(x,t)+\sum_{j=1}^{M}\rho_{p+1,j},$$
with $\rho_{p+1,j}$ as in Lemma \ref{itera}, is a $p+1$-layered solution with parameters $(N_{i},\beta_{i},k_{i},K_{i})$ for $i=0,1,2,...,p+1$ with $\rho_{p+1}=\sum_{j=1}^{M}\rho_{p+1,j}$, $F_{p+1}=\sum_{j=0}^{M}F_{p+1,j}+\tilde{F}_{p+1,M}$.
\end{lemma}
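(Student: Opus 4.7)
Most of the conditions of Definition \ref{player} follow directly from Lemma \ref{itera} and Lemma \ref{forces} after setting $\rho_{p+1} := \sum_{j=1}^{M}\rho_{p+1,j}$ and $F_{p+1} := \sum_{j=0}^{M} F_{p+1,j} + \tilde F_{p+1,M}$. Condition 3 (the evolution equation) is precisely \eqref{evolucionj0} specialised to $j_0 = M$. Conditions 1 and 4 (smoothness, compact support, vanishing for $t \leq t_0$) come from \eqref{support} and the factor $w(t)$, which is zero on $[0,t_0]$ and propagates through the inductive construction of $\rho_{p+1,j}, F_{p+1,j}, \tilde F_{p+1,j}$. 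Condition 6 (smallness of the source in $C^{m_0}$) is a triangle-inequality consequence of Lemma \ref{forces}: $\|F_{p+1}\|_{C^{m_0}} \leq (M{+}1) N_{p+1}^{-1/2} + N_{p+1}^{-1} \leq N_{p+1}^{-1/4}$ for $N_p$ (hence $N_{p+1}$) large. The parameter relations \eqref{cotasnb} and condition 2 follow from the choice of $N_{p+1}$ in Remark \ref{remarknp1} combined with $\beta_{p+1} \geq \beta_p/2$, after taking $N_p$ large.

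The $C^j$ bounds (conditions 5 and 7) I plan to obtain by combining \eqref{boundg} with Lemma \ref{boundsfinalproduct}: distributing derivatives in the decomposition \eqref{rhopi} of $\rho_{p+1,i}$, one gets
$$\|\rho_{p+1,i}(\cdot,t)\|_{C^j} \leq C_i \ln^{C} N_{p+1}\cdot N_{p+1}^{\,j-1+(2-i)\beta_{p+1}},$$
which for $i\geq 1$ and $\beta_{p+1}<1/4$ is $\leq \tfrac{1}{2M}N_{p+1}^j$ once $N_{p+1}$ is large; summing over $i$ gives $\|\rho_{p+1}\|_{C^j}\leq N_{p+1}^j$. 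Adding the hypothesis bound $\|\sum_{l=0}^{p}\rho_l\|_{C^j}\leq 2N_p^j$ and using $N_p \leq N_{p+1}^{\beta_{p+1}/8}$ from \eqref{cotasnb} yields condition 7 with constant $2$. The a priori bound \eqref{lowfrapriori} at level $p+1$ reduces directly to the same bound at level $p$ already ensured by the $p$-layered structure.

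The delicate point is the angle and Jacobian structure \eqref{derivadasrhoi}--\eqref{anguloup2} for the new layer, and this is where I expect the main effort. The leading piece is $\rho_{p+1,1}(x,t) = g^1_{1,1}(x,t)\sin(N_{p+1}X(x,t))$; oddness combined with $\P_1 u(\sum_{l=0}^p\rho_l)(0)=0$ and the linear initial condition of $\tilde X$ imply $X(0,t) = 0$ for all $t \in [t_0,1]$, hence $\sin(N_{p+1}X(0,t)) = 0$ and $\cos(N_{p+1}X(0,t)) = 1$. Therefore
$$\nabla\rho_{p+1,1}(0,t) = g^1_{1,1}(0,t)\,N_{p+1}\,\nabla X(0,t),$$
and since $X$ is constructed so that its linear part has angle $\alpha_{p+1}(t)$ (the angle moving with velocity $u(\sum_{l=0}^p\rho_l)$) this gives $\nabla\rho_{p+1,1}(0,t)=A_{p+1}(t)(\cos\alpha_{p+1},\sin\alpha_{p+1})$ with amplitude $A_{p+1}(t) := g^1_{1,1}(0,t)\,N_{p+1}\,|\nabla X(0,t)|$. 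The normalisation of $\mathcal{A}_1$ and the very choice $\mathcal{A}_1(1)/\mathcal{A}_1(t_0)=N_{p+1}^{1/\beta_{p+1}}$ from \eqref{choicetp} were made precisely so that $A_{p+1}(1)\in[N_{p+1}^{\beta_{p+1}}/2,2N_{p+1}^{\beta_{p+1}}]$, and \eqref{amplitudmonotona} keeps it in a comparable range on $[t_0,1]$. Meanwhile $|\nabla X(0,t)|$ stays in $[1/2,2]$ by Lemma \ref{p1phi} and the linearity of $\P_1 u$.

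The Jacobian relations \eqref{anguloup}--\eqref{anguloup2} will follow the same pattern: by Corollary \ref{defuk}, $u^{0}(\rho_{p+1,1}) = C_0\cos(\alpha_{p+1})(\sin\alpha_{p+1},-\cos\alpha_{p+1})\rho_{p+1,1}$, so differentiating at $x=0$ and again using $X(0,t)=0$ to evaluate the trigonometric factors gives the claimed structure with $B_{p+1}(t)=A_{p+1}(t)$, while $u-u^{0}$ is absorbed into the error thanks to Lemma \ref{boundsukgen}. Throughout, the corrections $\rho_{p+1,j}$ for $j\geq 2$ contribute to $\nabla\rho_{p+1}(0,t)$ and $\nabla u(\rho_{p+1})(0,t)$ at most a constant times $\ln^C N_{p+1}\cdot N_{p+1}^{(3-j)\beta_{p+1}}$ via \eqref{boundg}, safely below the tolerance $N_{p+1}^{\beta_{p+1}/8}$ of \eqref{derivadasrhoi}--\eqref{anguloup2}. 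The main obstacle will be precisely this bookkeeping: showing that the structural quantities $\alpha_{p+1},A_{p+1},B_{p+1}$ of the new layer really are captured, up to the tolerated error, by the ODE quantities $\alpha(t),\mathcal{A}_1(t)$ driving the solution operator $S^{M}_{t_0,t}$, and that the angle equation \eqref{angleeq} for $\alpha_{p+1}$ matches the evolution of the angle of $\P_1 X$ guaranteed by the truncated transport construction of Definition \ref{truncated}.
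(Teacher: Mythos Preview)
Your outline matches the paper's approach closely, and the bookkeeping for conditions 1--8 and \eqref{lowfrapriori} is correct. The gap is in the verification of \eqref{derivadasrhoi}--\eqref{anguloup2}. You attempt to establish $A_{p+1}(t)\in[\tfrac12 N_{p+1}^{\beta_{p+1}},2N_{p+1}^{\beta_{p+1}}]$ and $|\nabla X(0,t)|\in[1/2,2]$ on the \emph{full} construction interval $[t_0,1]$ (with $t_0$ from Lemma~\ref{girot0}), and the tools you invoke cannot deliver this: \eqref{amplitudmonotona} is one-sided (it says $\mathcal A$ is almost increasing, but $\mathcal A_1(t_0)=N_{p+1}^{-1/\beta_{p+1}}$ is tiny, so the lower bound on $A_{p+1}$ fails near $t_0$), and Lemma~\ref{p1phi} only gives $|\nabla X(0,t)|$ in a window of size $N_p^{\beta_p/4}$, not $[1/2,2]$.

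The resolution in the paper is that \eqref{derivadasrhoi}--\eqref{anguloup2} for index $i=p+1$ are only required on the much shorter interval $[1-\tilde C N_{p+1}^{-3\beta_{p+1}/4},1]$. On that interval one argues directly from the ODEs: $|\partial_t\mathcal A_1|\le 3N_p^{\beta_p}|\mathcal A_1|$ and $|\partial_t|\nabla X(0,t)||\le C\|u(\sum_{l\le p}\rho_l)\|_{C^1}|\nabla X(0,t)|$, and since the interval length is $\tilde C N_{p+1}^{-3\beta_{p+1}/4}\ll N_p^{-\beta_p}$ by Property~2, both quantities stay in $[1/\sqrt2,\sqrt2]$ of their values at $t=1$. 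With this correction your argument goes through; the rest of your sketch (leading term via $X(0,t)=0$, corrections $j\ge2$ absorbed by \eqref{boundg}, $u-u^0$ via Lemma~\ref{boundsukgen}) is exactly the paper's route.
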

\begin{proof}
	$\rho_{p+1},F_{p+1}\in C^{\infty}$ is straightforward by construction, since $\rho_{p+1,1}\in C^{\infty}$, and all the terms in the construction are obtained by taking derivatives, multiplying and composing with smooth functions, while the support condition is a direct consequence of \eqref{support} in Lemma \ref{itera}. Property 2 in Definition \ref{player} is also immediate from the Definition of $N_{p+1}$, namely from Remark \ref{remarknp1}. 
	Property 3 is also trivially true by applying Lemma \ref{itera}. Property 4 follows directly from Lemmas \ref{girot0} and \ref{itera}. Properties 5 and 7 follow from Lemmas \ref{itera} and \ref{boundsfinalproduct}, as well as 
 $k_{p+1}, K_{p}\geq \lceil\frac{4}{\beta_{p}^2}\rceil\geq \lceil\frac{1}{\beta_{p+1}^2}\rceil$ 
 in our choices. Property 6 follows directly from Lemma \ref{forces}. Finally, property 8 follows from our choice of $\beta_{p+1}$ and the bounds for $N_{p+1}$ from Remark \ref{remarknp1}.
	
Bound \eqref{lowfrapriori} follows from Property 5 and Property 2 combined with $k_{i}\geq 2$.

Now, to prove \eqref{derivadasrhoi}, we define
\begin{align*}
    \cos(\alpha_{p+1}(t))&=\frac{\p_{x_{1}}X(x=0,t)}{[(\p_{x_{1}}X(x=0,t))^2+\p_{x_{1}}X(x=0,t)]^2]^{\frac12}},\\ \sin(\alpha_{p+1}(t))&=\frac{\p_{x_{2}}X(x=0,t)}{[(\p_{x_{1}}X(x=0,t))^2+\p_{x_{1}}X(x=0,t)]^2]^{\frac12}}. \end{align*}
By Lemma \ref{itera}, we have that
\begin{align}\label{decomprhop}
	\rho_{p+1}(x,t)=\rho_{p+1,1}(x,t)+\sum_{l=0}^{C_{\beta_{p},\beta_{p+1}}}a_{l}(x,t)\sin(NlX(x,t))+\sum_{l=0}^{C_{\beta_{p},\beta_{p+1}}}b_{l}(x,t)\cos(NlX(x,t))
\end{align}

with
$$\|a_{l}(x,t)\|_{C^{m}},\|b_{l}(x,t)\|_{C^{m}}\leq C_{m,l,\beta_{p},\beta_{p+1}}\ln(N_{p+1})^{C_{m,l,\beta_{p},\beta_{p+1}}}N_{p+1}^{-1+2m\beta_{p+1}},$$
and $\rho_{p+1,1}(x,t)=g^{1}_{1,1}(x,t)\sin(NX(x,t))$.
This already gives us, for $i=1,2$
\begin{align*}
	&|\p_{x_{i}}[\sum_{l=0}^{C_{\beta_{p},\beta_{p+1}}}a_{l}(x,t)\sin(NlX(x,t))+\sum_{l=0}^{C_{\beta_{p},\beta_{p+1}}}b_{l}(x,t)\sin(NlX(x,t))]|(x=0)\\
	&\leq C_{m,l,\beta_{p},\beta_{p+1}}\ln(N_{p+1})^{C_{m,l,\beta_{p},\beta_{p+1}}}\leq N_{p+1}^{\frac{\beta_{p+1}}{8}}.
\end{align*}

On the other hand, we have that
$$\p_{x_{1}}\rho_{p+1,1}(x=0,t)=[\p_{x_{1}}X(x=0,t)^2+\p_{x_{2}}X(x=0,t)^2]^{0.5}(N_{p+1}\cos(\alpha_{p+1}(t)))g^{1}_{1,1}(x=0,t),$$
$$\p_{x_{2}}\rho_{p+1,1}(x=0,t)=[\p_{x_{1}}X(x=0,t)^2+\p_{x_{2}}X(x=0,t)^2]^{0.5}(N_{p+1}\sin(\alpha_{p+1}(t)))g^{1}_{1,1}(x=0,t).$$
Note that, by the definition of $X(x,t)$, we have that
$$\p_{t}\P_{1}(X(x,t))=-\P_{1}(u(\sum_{l=0}^{p}\rho_{l}))\cdot\nabla[\P_{1}(X(x,t))]$$
which, combined with $u(\sum_{l=0}^{p}\rho_{l})(x=0,t)=0$ implies
$$|\p_{t}[\p_{x_{1}}X(x=0,t)^2+\p_{x_{2}}X(x=0,t)^2]^{0.5}|\leq C\|u(\sum_{l=0}^{p}\rho_{l})\|_{C^1}[\p_{x_{1}}X(x=0,t)^2+\p_{x_{2}}X(x=0,t)^2]^{0.5},$$
and using the bounds for $\|u(\sum_{l=0}^{p}\rho_{l})\|_{C^1}$ and $[\p_{x_{1}}X(x=0,t=1)^2+\p_{x_{2}}X(x=0,t=1)^2]^{0.5}=1$ we get, for $N_{p}$ big, for $t\in [1-\tilde{C}N_{p+1}^{\frac{-3\beta_{p+1}}{4}},1]$
$$[\p_{x_{1}}X(x=0,t)^2+\p_{x_{2}}X(x=0,t)^2]^{0.5}\in [\frac{1}{\sqrt{2}},\sqrt{2}].$$
By using the definition of $g^{1}_{1,1}(x,t)$ we can check that
$$g^{1}_{1,1}(x=0,t)=\frac{\mathcal{A}_{1}(t)}{N_{p+1}^{1-\beta_{p+1}}}.$$
But since 
$$|\p_{t}\mathcal{A}_{1}(t)|\leq |\mathcal{A}_{1}(t)3N_{p}^{\beta_{p}}|$$
for $t\in [1-\tilde{C}N_{p+1}^{\frac{-3\beta_{p+1}}{4}},1]$, and using Property 2, for $N_{p}$ big depending only on $\beta_{p},\beta_{p+1}$
$$\mathcal{A}_{1}(t)\in[\frac{1}{\sqrt{2}},\sqrt{2}],$$
which finishes the proof of \eqref{derivadasrhoi} since we have, for the desired times,
$$\p_{x_{1}}\rho_{p+1,1}(x=0,t)\in \cos(\alpha_{p+1}(t))[\frac{N_{p+1}^{\beta_{p+1}}}{2},2N_{p+1}^{\beta_{p+1}}],$$
$$\p_{x_{2}}\rho_{p+1,1}(x=0,t)\in \sin(\alpha_{p+1}(t))[\frac{N_{p+1}^{\beta_{p+1}}}{2},2N_{p+1}^{\beta_{p+1}}].$$

To obtain \eqref{anguloup}, we use again \eqref{decomprhop}, and note that
$$\|u(\sum_{l=0}^{C_{\beta_{p},\beta_{p+1}}}a_{l}(x,t)\sin(NlX(x,t))+\sum_{l=0}^{C_{\beta_{p},\beta_{p+1}}}b_{l}(x,t)\sin(NlX(x,t)))\|_{C^{1}}\leq\ \frac{1}{2} N_{p+1}^{\frac{\beta_{p+1}}{8}}.$$
Furthermore, we have, using Lemma \ref{boundsukgen} that
\begin{align*}
	&\|u(\rho_{p+1,1})-u^{0}(\rho_{p+1,1})\|_{C^{1}}\\
	&\leq \|u^{M}(\rho_{p+1,1})-u^{0}(\rho_{p+1,1})\|_{C^{1}}-\|u(\rho_{p+1,1})-u^{M}(\rho_{p+1,1})\|_{C^{1}}\leq \frac{1}{2} N_{p+1}^{\frac{\beta_{p+1}}{8}}.
\end{align*}
And finally, using the definition of $u^{0}$ in Corollary \ref{defuk}

\begin{align*}
	&\p_{x_{i}}u_{1}^{0}(\rho_{p+1,1})(x=0,t)=C_{0}g^{1}_{1,1}(0,t)(\p_{x_{i}}\sin(NX(x,t))(x=0))\sin(\alpha_{p+1}(t))\cos(\alpha_{p+1}(t))\\
	&=C_{0}\frac{\mathcal{A}_{1}(t)}{N_{p+1}^{1-\beta_{p+1}}}(\p_{x_{i}}\sin(NX(x,t))(x=0))\sin(\alpha_{p+1}(t))\cos(\alpha_{p+1}(t))
\end{align*}

	\begin{align*}
		&\p_{x_{i}}u_{2}^{0}(\rho_{p+1,1})(x=0,t)=-C_{0}g^{1}_{1,1}(0,t)(\p_{x_{i}}\sin(NX(x,t))(x=0))\cos(\alpha_{p+1}(t))^2\\
		&=-C_{0}\frac{\mathcal{A}_{1}(t)}{N_{p+1}^{1-\beta_{p+1}}}(\p_{x_{i}}\sin(NX(x,t))(x=0))\cos(\alpha_{p+1}(t))^2
	\end{align*}
which finishes the proof.

\end{proof}

\section{Proof of the blow-up}\label{secblowup}

We are now ready to prove our final theorem.

\begin{theorem}
    There exists smooth initial conditions $\rho(x,0)\in C^{\infty}_{c}$ and a smooth source $F(x,t)\in C^{\infty}_{c}$ such that the only smooth solution to the IPM equation $\rho(x,t)$ blows up in finite time, more precisely
    $$\text{lim}_{t\rightarrow1}\|\rho(x,t)\|_{C^1}=\infty.$$
\end{theorem}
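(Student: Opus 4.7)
The plan is to construct the desired $\rho$ and $F$ as an infinite sum $\sum_{l=0}^{\infty}\rho_l$, $\sum_{l=0}^{\infty}F_l$ obtained by iterating Lemma \ref{finalitera}. First I would fix a sequence of parameters $(\beta_i)_{i\geq 0}$ (for instance $\beta_i$ slowly decreasing to $0$, but with $\beta_i^2 N_{i-1}^{\beta_{i-1}/8}\to\infty$) together with $k_i, K_i$ chosen large enough to satisfy the bookkeeping in Definition \ref{player}. The base case $p=0$ is easy: pick any odd $C^\infty_c$ profile $\rho_0$ with $N_0,\beta_0, K_0$ satisfying the required structure (for example by picking $\rho_0$ essentially of the form used to launch layer $1$ in Lemma \ref{itera}, so that the derivative and angular conditions \eqref{derivadasrhoi}--\eqref{anguloup2} hold), and take $F_0:=\partial_t\rho_0+u(\rho_0)\cdot\nabla\rho_0$. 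Then I would argue by induction: given a $p$-layered solution, use Remark \ref{remarknp1} to select $N_{p+1}$ (which can be taken as large as we wish, dominating any finite set of previously chosen constants), and apply Lemma \ref{finalitera} to produce a $p{+}1$-layered solution with parameters satisfying all of Definition \ref{player}.

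The second step is to take the limit $\rho:=\sum_{l=0}^\infty\rho_l$, $F:=\sum_{l=0}^\infty F_l$ and verify these define the claimed objects. The critical observation is property 4 of Definition \ref{player}: at each time $t<1$, only the layers with $1-\tilde C N_{i-1}^{-3\beta_{i-1}/4}\leq t$ are nonzero, which is a finite set since $N_{i-1}\to\infty$. Hence on any interval $[0,1-\delta]$ the sums are finite and $\rho\in C^\infty_c$ there, and by the telescoping identity (property 3) $\rho$ solves \eqref{eq:ipm} with source $F$ on $[0,1)$. For the smoothness of $F$ on $[0,1]$, I use property 6: $\|F_i\|_{C^{\lfloor 1/(4\beta_i)\rfloor-10}}\leq N_i^{-1/4}$. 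Since $N_i$ grows super-exponentially in $i$ (from property 2) while $\lfloor 1/(4\beta_i)\rfloor-10\to\infty$, the series $\sum F_i$ converges in every $C^m$ uniformly in $t$, giving $F\in L^\infty_t C^\infty_x$. The $L^2$/energy bound for $\rho$ and finite-energy velocity follow from the compact supports of the $\rho_i$'s (all in $B_1(0)$) together with standard Calder\'on--Zygmund bounds for the Riesz operator defining $u$.

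For the blow-up, I want to show $\lim_{t\to 1}\|\rho(\cdot,t)\|_{C^1}=\infty$. At time $t=1$, each layer $\rho_i$ is active and by \eqref{derivadasrhoi} has $|\nabla\rho_i(0,1)|\geq \tfrac12 N_i^{\beta_i}$; because layers at different scales are nearly orthogonal in frequency (this is the point of Lemma \ref{cuadpeque}) the large contribution of the highest active layer is not cancelled by the lower ones. More precisely, for any $p$, choose $t_p:=1-\tfrac{1}{2}\tilde{C}N_p^{-3\beta_p/4}$; by property 4 all layers $\rho_i$ with $i\leq p$ are active at $t_p$, while layers $i>p$ may or may not have turned on. Using \eqref{derivadasrhoi} for layer $p$ together with the control $\|\sum_{l=0}^{p-1}\rho_l\|_{C^1}\leq N_p^{\beta_p/8}$ from \eqref{lowfrapriori} and the analogous a priori bound on the higher active layers (which give much smaller contributions at the origin by the scale separation in \eqref{cotasnb}), one obtains $\|\rho(\cdot,t_p)\|_{C^1}\gtrsim N_p^{\beta_p}-CN_p^{\beta_p/8}\to\infty$ as $p\to\infty$. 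Finally, uniqueness of smooth solutions follows from standard local well-posedness for IPM in $H^s$ ($s>2$) or $C^{1,\alpha}$ as recalled in the introduction: any classical solution with the same initial data and source must coincide with $\rho$ on $[0,1)$, so the same blow-up must occur.

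The main obstacle I anticipate is not the convergence of the sums, which is forced by the super-exponential gap between $N_i$ and $N_{i+1}$, but rather the careful choice of the parameter sequence $(\beta_i,N_i,k_i,K_i)$ so that simultaneously (i) property 8 of Definition \ref{player} is respected at every step, (ii) $k_{p+1},K_{p+1}\geq \lceil 4/\beta_p^2\rceil+1$ as required by Lemma \ref{finalitera}, (iii) $N_p^{\beta_p}\to\infty$ to force blow-up, and (iv) $\lfloor 1/(4\beta_i)\rfloor-10\to\infty$ fast enough so that $\sum F_i$ converges in $C^\infty$. Also delicate is verifying that at the boundary times $1-\tilde C N_{i-1}^{-3\beta_{i-1}/4}$ (when layer $i$ switches on via the cutoff $w(t)$ in \eqref{wt}) the total solution and source remain smooth in $t$; but this is exactly the regularity built into the definition of $\rho_{p+1,1}$ in Lemma \ref{itera}, and is the reason the source is $L^\infty_t C^\infty_x$ rather than jointly smooth in time.
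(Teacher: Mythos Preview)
Your overall strategy matches the paper's, but there is a genuine gap in the parameter selection that the paper handles carefully and you do not. You write that after constructing the $p$-layered solution you ``use Remark \ref{remarknp1} to select $N_{p+1}$ (which can be taken as large as we wish, dominating any finite set of previously chosen constants)''. This is not correct: once $\beta_{p+1}$ is fixed, Remark \ref{remarknp1} \emph{pins down} $N_{p+1}$ via \eqref{choicetp} and in particular bounds it above by $e^{K_{+}\beta_{p+1}N_{p}^{\beta_{p}/4}}$. More importantly, the hypothesis of Lemma \ref{finalitera} is ``$N_{p}$ big enough (depending on $\beta_{p},\beta_{p+1},k_{p+1}$)'', and $N_{p}$ is already fixed from the previous step --- you cannot enlarge it after the fact. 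So pre-choosing a sequence $(\beta_i)$ decreasing to $0$ does not automatically work: at step $p$ you may find that $N_{p}$ is not large enough to allow the chosen $\beta_{p+1}<\beta_p$. The paper resolves this with an adaptive two-option rule: halve $\beta$ only when $N_{p}$ and $K_p$ permit it (checking also that the \emph{next} step with $\beta_{p+1}=\beta_{p+2}$ will be admissible), and otherwise keep $\beta_{p+1}=\beta_p$. It then proves by contradiction that option 1 is eventually always available, hence $\beta_p\to 0$; this is not automatic and uses the interplay between $k_{p+1}$, $K_p$, and the growth of $N_p$.

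There is a second, smaller issue in your blow-up step. At your time $t_p=1-\tfrac12\tilde C N_p^{-3\beta_p/4}$ the layer $\rho_{p+1}$ can already be switched on (since $t_p>1-\tilde C N_p^{-3\beta_p/4}$), and its $C^1$ norm is of order $N_{p+1}^{\beta_{p+1}}\gg N_p^{\beta_p}$, so the claim that higher layers ``give much smaller contributions'' is false in general. The paper avoids this by evaluating at $t=1-\tilde C N_p^{-3\beta_p/4}$ exactly, where property 4 guarantees $\rho_i=0$ for all $i>p$, so that $\rho_\infty=\sum_{l=0}^{p}\rho_l$ and one can combine \eqref{derivadasrhoi} for $\rho_p$ with \eqref{lowfrapriori} for $\sum_{l=0}^{p-1}\rho_l$ directly.
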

\begin{proof}
    We will show this using a constructive argument, by considering a sequence of $p$-layered solutions
    $$(\sum_{l=0}^{n}\rho_{l})_{n\in\mathds{N}}$$
    and we will show that
    $\rho_{\infty}(x,t)=\sum_{l=0}^{\infty}\rho_{l}$
    is a smooth, compactly supported solution to the \ref{eq:IPMsystem} system that blows up at $t=1$.
    We start by fixing
    $f(x)\in C^{\infty}$ fulfilling
	$\text{supp}f(x)\subset B_{1}(0)$, $f(x)=1$ if $|x|\leq \frac{1}{2}$ and
	$$f(x_{1},x_{2})=f(-x_{1},x_{2})=f(x_{1},-x_{2})=f(-x_{1},-x_{2}),$$
 and then we define
    $$\alpha_{0}(t)=N_{0}^{-\frac{1}{64}}+\frac{\pi}{2}$$
    $$\rho_{0}(x,t)=f(N_{0}^{\frac{1}{4}}x)\frac{\sin(N_{0}(\cos(\alpha_{0}(t)x_{1}+\sin(\alpha_{0}(t))x_{2})}{N_{0}^{1-\frac{1}{8}}}.$$
    We will choose $N_{0}$ big enough so that $\rho_{0}(x,t)$  with $\beta_{0}=\frac18$, $k_{0},K_{0}\geq 2$ is a $0$-layered solution and also big enough so that Lemma \ref{finalitera} applies with $\beta_{p}=\beta_{p+1}=\frac18$.
    Now, given a $p$-layered solution with $\beta_{p}=\tilde{\beta}$ we construct our $p+1$-layered solution the following way:
    \begin{enumerate}
        \item If $K_{p}\geq\lceil\frac{4}{\tilde{\beta}^2}\rceil +1$ and $N_{p}$ is big enough to apply Lemma \ref{finalitera} with both $\beta_{p}=\tilde{\beta},\beta_{p+1}=\frac{\tilde{\beta}}{2}$ and $\beta_{p}=\frac{\tilde{\beta}}{2},\beta_{p+1}=\frac{\tilde{\beta}}{2}$,  with $k_{p+1}\geq \lceil\frac{4}{\tilde{\beta}^2}\rceil +1$, then we choose our $p+1-$layered solution with $\beta_{p+1}=\frac{\tilde{\beta}}{2}$.
        \item Otherwise, we choose our $p+1-$layered solution with $\beta_{p+1}=\tilde{\beta}$, $k_{p+1}$ the biggest choice of $k_{p+1}$ that our value of $N_{p}$ allows.
    \end{enumerate}
    Note that, since $N_{i+1}\geq N_{i}$, the way our construction works we know option 2 is always a real option: We have $\tilde{\beta}=\beta_{i_{0}}$, where $i_{0}$ is the last step when we used option 1, and we already know that $N_{i_{0}}$ was big enough to apply Lemma \ref{finalitera} with $\beta_{i_{0}}=\beta_{i_{0}+1}=\tilde{\beta}$ and with $k_{i_{0}+1}\geq \lceil\frac{4}{\tilde{\beta}^2}\rceil+1$. The only other possible option is that $\tilde{\beta}=\beta_{0}$, but note that we have chosen already $\beta_{0},N_{0}$ so that step 2 is an option. Note also that, every time we apply step 1, since  $N_{p}$ increases, the maximum value of $k_{p+1}$ that step 2 allows must remain the same or grow.

    Therefore, we can define the sequence $(\sum_{l=0}^{n}\rho_{l})_{n\in\mathds{N}}$. Furthermore, since $\rho_{p+1},F_{p+1}=0$ if $t\in[0,1-\tilde{C}N_{p}^{-\frac{3\beta_{p}}{4}}]$ and $2N_{p+1}^{-\frac{3\beta_{p+1}}{4}}\leq N_{p}^{-\frac{3\beta_{p}}{4}}$, we have that for any given $\bar{t}\in[0,1)$, 
    $$\sum_{l=0}^{\infty}\rho_{l}(x,\bar{t})=\sum_{l=0}^{C_{\bar{t}}}\rho_{l}(x,\bar{t})$$
    $$\sum_{l=0}^{\infty}F_{l}(x,\bar{t})=\sum_{l=0}^{C_{\bar{t}}}F_{l}(x,\bar{t})$$
    so the limits $\theta_{\infty}=\sum_{i=0}^{\infty}\theta_{p}$, $F_{\infty}=\sum_{i=0}^{\infty}F_{p}$ are well defined and give a solution to the \ref{eq:IPMsystem} system for $t\in[0,1)$. Note also that, by construction, $\rho_{\infty},F_{\infty}$ are smooth and compactly supported.
    Next, we show that
    $$\text{lim}_{p\rightarrow\infty}\beta_{p}=0.$$
    For this we argue by contradiction: If $\beta_{p}\geq \epsilon$, then there is a last time we apply step 1 in our construction, which means that either $N_{p}$ or $K_{p}$ is bounded from above. Since we know $N_{p+1}\geq 4N_{p}$, it must be that $K_{p}$ is bounded, say by $M$. However, as $N_{p}$ grows (monotonously and without bound) we know from Lemma \ref{finalitera} that $k_{p+1}$ must also tend to infinity. Then, consider $p_{0}$ such that for any $P\geq p_{0}$, we have $k_{P}>M+1$. By assumption we then have
    $$\|\sum_{l=0}^{P}\rho_{l}\|_{C^{M+1}}> 2N_{P}^{M+1}.$$
    But, using $N_{i+1}\geq 4N_{i}$ and property 5 in Definition \ref{player}
    \begin{align*}
        &\|\sum_{l=0}^{P}\rho_{l}\|_{C^{M+1}}\leq \|\sum_{l=0}^{p_{0}-1}\rho_{l}\|_{C^{M+1}}+ \sum_{l=p_{0}}^{P}N_{l}^{M+1} \leq C+ N_{P}^{M+1}\sum_{p=p_{0}}^{P}4^{p-P}\leq C+\frac{3}{2}N_{P}^{M+1}
    \end{align*}
    which then gives us, for any $P$
    $$C+\frac{3}{2}N_{P}^{M+1}\geq \|\sum_{l=0}^{P}\rho_{l}\|_{C^{M+1}}> 2N_{P}^{M+1}$$
    and taking $P$ big gives a contradiction.
    Therefore, $\beta_{p}\rightarrow0$. In particular, this immediately implies that $F_{\infty}=\sum_{l=0}^{\infty}F_{l}\in C^{\infty}$ since $\beta_{p}$ is decreasing and tends to zero, so for any value of $m$ there exists $C_{m}$ such that $p\geq C_{m}$ implies $m\leq \frac{1}{4\beta_{p}^2}-10 $, and thus, using property 6 in Definition \ref{player} we have that

    $$\|\sum_{l=0}^{\infty}F_{l}\|_{C^{m}}\leq C+\|\sum_{l=C_{m}}^{\infty}F_{l}\|_{C^{m}}\leq C+\sum_{l=C_{m}}^{\infty}N_{l}^{-\frac14}\leq C$$
    where in the last step we used $N_{p+1}\geq 4N_{p}$ to show that the sum converges.
    The only thing left to show is that
    $$\text{lim}_{t\rightarrow 1}\|\rho_{\infty}\|_{C^1}=\infty.$$
    But for $t=1-\tilde{C}N_{p}^{-\frac{3\beta_{p}}{4}}$, using property 4 in \ref{player} we have that
    $$\rho_{\infty}(x,t=1-\tilde{C}N_{p}^{-\frac{3\beta_{p}}{4}})=\sum_{l=0}^{p}\rho_{l}(x,t=1-\tilde{C}N_{p}^{-\frac{3\beta_{p}}{4}})$$
    and we have that, if $z=x_{1}\cos(\alpha_{p}(t))+x_{2}\sin(\alpha_{p}(t))$
    \begin{align*}
       &|(\frac{\p}{\p z}\rho_{\infty})(x=0,t=1-N_{p}^{-\frac{3\beta_{p}}{4}})|\geq |(\frac{\p}{\p z}\rho_{p})(x=0,t=1-N_{p}^{-\frac{3\beta_{p}}{4}})|-|(\frac{\p}{\p z}\sum_{l=0}^{p-1}\rho_{l})(x=0,t=1-N_{p}^{-\frac{3\beta_{p}}{4}})|\\
       &\geq \frac{N_{p}^{\beta_{p}}}{2}- 2N_{p}^{\frac{\beta_{p}}{8}}\geq \frac{N_{p}^{\beta_{p}}}{4}
    \end{align*}
    where we used \eqref{lowfrapriori} and \eqref{cotasnb}, and since $N_{p}^{\beta_{p}}$ is unbounded this finishes the proof.

\end{proof}

\section*{Acknowledgment}
 This work is supported in part by the Spanish Ministry of Science
and Innovation, through the “Severo Ochoa Programme for Centres of Excellence in R$\&$D (CEX2019-000904-S \& CEX2023-001347-S)” and 152878NB-I00. We were also partially supported by the ERC Advanced Grant 788250, and by the SNF grant FLUTURA: Fluids, Turbulence, Advection No. 212573.

\bibliographystyle{siam}


\end{document}